\newtheorem{theorem}{Theorem}[section]
\newtheorem{corollary}[theorem]{Corollary}
\newtheorem{lemma}[theorem]{Lemma}
\newtheorem{proposition}[theorem]{Proposition}
\theoremstyle{definition}
\newtheorem{definition}[theorem]{Definition}
\newtheorem{example}[theorem]{Example}
\newtheorem{remark}[theorem]{Remark}
\numberwithin{equation}{subsection}
\newtheorem*{ack}{Acknowledgement}
\newcommand{\Conj}{\operatorname{Conj}}
\newcommand{\Core}{\operatorname{Core}}
\newcommand{\Alex}{\operatorname{Alex}}
\newcommand{\Env}{\operatorname{Env}}
\newcommand{\Inn}{\operatorname{Inn}}
\newcommand{\Map}{\operatorname{Map}}
\newcommand{\Z}{\operatorname{Z}}
\newcommand{\C}{\operatorname{C}}
\newcommand{\Aut}{\operatorname{Aut}}
\newcommand{\R}{\operatorname{R}}
\newcommand{\id}{\operatorname{id}}
\DeclareFontFamily{U}{mathb}{\hyphenchar\font45}
\DeclareFontShape{U}{mathb}{m}{n}{<5> <6> <7> <8> <9> <10> <10.95> <12> <14.4> <17.28> <20.74> <24.88> mathb10}{}
\DeclareSymbolFont{mathb}{U}{mathb}{m}{n}
\DeclareMathSymbol{\blackdiamond}{2}{mathb}{"0C}
\tikzset{>=stealth', rect1/.style={rectangle, draw=black, text width=38mm, minimum height=17mm, text centered},
rect2/.style={rectangle, draw=black, text width=55mm, minimum height=28mm, text centered},
arrow/.style={->}}
\begin{document}
	
\title[Orderability of link quandles]{Orderability of link quandles}
\author[H. Raundal]{Hitesh Raundal}
\author[M. Singh]{Mahender Singh}
\author[M. Singh]{Manpreet Singh}
\address{Department of Mathematical Sciences, Indian Institute of Science Education and Research (IISER) Mohali, Sector 81, S. A. S. Nagar, P. O. Manauli, Punjab 140306, India.}
\email{hiteshrndl@gmail.com}
\email{mahender@iisermohali.ac.in}
\email{mp15009@iisermohali.ac.in}

\subjclass[2020]{Primary 57K10; Secondary 57K12, 20N02}
\keywords{Link quandle, link group, Montesinos link, orderable group, orderable quandle, torus link, quandle 2-cocycle}

\begin{abstract}
The paper develops a general theory of orderability of quandles with a focus on link quandles of tame links and gives some general constructions of orderable quandles. We prove that knot quandles of many fibered prime knots are right-orderable, whereas link quandles of most non-trivial torus links are not right-orderable. As a consequence, we deduce that the knot quandle of the trefoil is neither left nor right orderable. Further, it is proved that link quandles of certain non-trivial positive (or negative) links are not bi-orderable, which includes some alternating knots of prime determinant and alternating Montesinos links. The paper also explores interconnections between orderability of quandles and that of their enveloping groups. The results establish that orderability of link quandles behave quite differently than that of corresponding link groups.
\end{abstract}

\maketitle

\section{Introduction}\label{introduction}

It is known that existence of a linear order on a group has profound implications on its structure. For instance, a left-orderable group cannot have torsion and a bi-orderable group cannot have even generalized torsion (product of conjugates of a non-trivial element being trivial). In terms of applicability, it is known that integral group rings of left-orderable groups have no zero-divisors. Recall that the famous Kaplansky's conjecture asserts this to be true for all torsion-free groups. Concerning groups arising in topology, the literature shows that many such groups are left-orderable. In fact, the fundamental group of any connected surface other than the projective plane or the Klein bottle is bi-orderable \cite{brw}. Braid groups are the most relevant examples of left-orderable groups which are not bi-orderable \cite{d}. On the other hand, pure braid groups are known to be bi-orderable \cite{fr}. Rourke and Wiest \cite{rw} extended this result by showing that mapping class groups of all Riemann surfaces with non-empty boundary are left-orderable. In general these groups are not bi-orderable. Orderability of 3-manifold groups has been investigated extensively where left-orderability is a rather common property. Concerning link complements, it is known that fundamental groups of link complements are left-orderable \cite{brw}, whereas fundamental groups of not all link complements are bi-orderable \cite{pr}. For example, the knot group of the figure-eight knot is bi-orderable and the group of a non-trivial cable of an arbitrary knot is not bi-orderable. In general, a fibered knot has bi-orderable knot group if all the roots of its Alexander polynomial are real and positive \cite{pr}. Note that there are infinitely many such fibered knots. For more on the literature, we refer the reader to the recent monograph \cite{cr} by Clay and Rolfsen which explores orderability of groups motivated by topology, like fundamental groups of surfaces or 3-manifolds, braid and mapping class groups, groups of homeomorphisms, etc. Another monograph \cite{ddrw} on orderability of braid groups is worth looking into.
\par

The notion of orderability can be defined for magmas just as for groups. Since quandles represent interesting examples of non-associative magmas and link quandles are deeply related to link groups, it seems natural to explore orderability of quandles. Orderability of magmas including conjugation quandles of bi-orderable groups has been analyzed in \cite{DDHPV}. Further, in recent works \cite{Ha2018, HaValentina}, the space of right orders on the conjugation quandle of the countably infinite rank free group has been shown to be the Cantor set. Another recent work \cite{bps} investigated orderability of quandles for studying zero-divisors in quandle rings. Unlike groups, one sided orderability of quandles does not imply the other sided orderability. We also explore interconnections between orderability of quandles and that of their enveloping groups. Our results show that orderability of link quandles behave quite differently than that of corresponding link groups. For instance, we show that the knot quandle of the trefoil knot is neither left nor right orderable, whereas knot quandles of many fibered prime knots, for example the figure-eight knot, are right-orderable.
\medskip

The paper is organized as follows. Section \ref{definitions-known-results} recalls some basic definitions, examples and results that are needed in subsequent sections. In Section \ref{properties-orderable-quandle}, we derive some basic properties of orderings on quandles and prove that any linear order on a quandle must be of restricted type (Theorem \ref{th2}). In Section \ref{construction-order-quandles-automorphisms}, we define action of a quandle and prove that if a quandle acts faithfully on a well-ordered set by order-preserving bijections, then it is right-orderable (Theorem \ref{th5}). The converse holds for right-orderable semi-latin quandles. We show that certain disjoint unions, direct products and extensions of orderable quandles are orderable (Proposition \ref{prop6}, Proposition \ref{prop1} and Proposition \ref{orderable-extensions}). We also obtain the group of order-preserving automorphisms of natural orderable-quandles arising from orderable groups (Proposition \ref{order-aut-conj} and Proposition \ref{order-aut-alex}). Section \ref{orderability-general-quandle} discusses orderability of some general quandles. We determine a necessary condition for the left-orderability of a quandle $Q$ for which the natural map $Q\to\Conj(\Env(Q))$ is injective (Proposition \ref{prop11}). The construction of free racks/quandles has been extended in a recent work of Bardakov and Nasybullov \cite{bn} to so called $(G,A)$-racks/quandles. We prove that if $G$ is a bi-orderable group and $A$ a subset of $G$, then the corresponding $(G,A)$-racks/quandles are right-orderable (Theorem \ref{th3}). As a consequence, it follows that free quandles, in particular, quandles of trivial links are right-orderable (Corollary \ref{cor2}). In Section \ref{orderability-knot-quandle}, we prove that if all the roots of the Alexander polynomial of a fibered prime knot are real and positive, then its knot quandle is right-orderable (Corollary \ref{cor4}). In another main result of this section, we prove that link quandles of certain non-trivial positive (or negative) links are not bi-orderable (Theorem \ref{th6}). This includes non-trivial prime Montesinos links that are alternating positive (or negative) and knots of prime determinant that are alternating positive (or negative). In Section \ref{orderability-torus-quandle}, we prove that if $m,n\geq2$ are integers such that one is not a multiple of the other, then the link quandle of the torus link $T(m,n)$ is not right-orderable (Theorem \ref{th4}). As a consequence, we recover a result of Perron and Rolfsen that the knot group of a non-trivial torus knot is not bi-orderable (Corollary \ref{cor3}). Finally, in Section \ref{involutory-quandles-links}, we discuss left-orderability of involutory quandles of alternating links (Theorem \ref{thm-left-order}).

\medskip

\section{Preliminaries}\label{definitions-known-results}

We begin by defining the main object of our study.

\begin{definition}
A {\it quandle} is a non-empty set $Q$ together with a binary operation $*$ satisfying the following axioms:
\begin{enumerate}
\item[{\textbf Q1}] $x*x=x$\, for all $x\in Q$.
\item[{\textbf Q2}] For each $x, y \in Q$, there exists a unique $z \in Q$ such that $x=z*y$.
\item[{\textbf Q3}] $(x*y)*z=(x*z)*(y*z)$\, for all $x,y,z\in Q$.
\end{enumerate}
\end{definition}

The axiom {\textbf Q2} is equivalent to bijectivity of the right multiplication by each element of $Q$. This gives a dual binary operation $*^{-1}$ on $Q$ defined as $x*^{-1}y=z$ if $x=z*y$. Thus, the axiom {\textbf Q2} is equivalent to saying that
\begin{equation*}
(x*y)*^{-1}y=x\qquad\textrm{and}\qquad\left(x*^{-1}y\right)*y=x
\end{equation*}
for all $x,y\in Q$, and hence it allows us cancellation from right. The axioms {\textbf Q1} and {\textbf Q3} are referred as idempotency and distributivity axioms, respectively. Idempotency and cancellation together give $x*^{-1}x=x$ for all $x\in Q$.
\par

Topologically, the three quandle axioms correspond to the three Reidemeister moves of planar diagrams of links in the 3-space, which was observed independently in the foundational works of Joyce \cite{j1,j2} and Matveev \cite{m}. Following are examples of quandles, some of which we shall use in the forthcoming sections.
\begin{itemize}
\item If $G$ is a group and $n \in \mathbb{Z}$, then the binary operation $x*y=y^{-n}xy^n$ turns $G$ into the quandle $\Conj_n(G)$ called the $n$-\textit{conjugation quandle} of $G$. For $n=1$, the quandle is simply denoted by $\Conj(G)$.
\item If $G$ is a group, then the binary operation $x*y=yx^{-1}y$ turns $G$ into the quandle $\Core(G)$ called the \textit{core quandle} of $G$. In particular, if $G$ is a cyclic group of order $n$, then it is called the \textit{dihedral quandle} and is denoted by $\R_n$. Usually, one writes $\R_n=\{0, 1, \ldots, n-1\}$ with $i*j=2j-i \mod n$.
\item If $G$ is a group and $\phi\in \Aut(G)$, then $G$ with the binary operation $x*y=\phi\left(xy^{-1}\right)y$ forms a quandle $\Alex(G,\phi)$ referred as the \textit{generalized Alexander quandle} of $G$ with respect to $\phi$.
\item If $L$ is a link in the 3-sphere, then Joyce \cite{j1,j2} and Matveev \cite{m} associated a quandle $Q(L)$ to $L$ called the \textit{link quandle} of $L$. We fix a diagram $D(L)$ of $L$ and label its arcs. Then the link quandle $Q(L)$ is generated by labelings of arcs of $D(L)$ with a defining relation at each crossing in $D(L)$ given as shown in Figure \ref{fig1}. The link quandle of a link $L$ is independent of the diagram chosen, i.e., the quandles obtained from any two diagrams of $L$ are isomorphic.
\begin{figure}[H]
\begin{subfigure}{0.4\textwidth}
\centering
\begin{tikzpicture}[scale=0.6]
\node at (0.4,-1.2) {{\small $x$}};
\node at (-1.2,0.4) {{\small $y$}};
\node at (0.8,1.2) {{\small $x*y$}};
\begin{knot}[clip width=6, clip radius=4pt]
\strand[->] (-2,0)--(2,0);
\strand[->] (0,-2)--(0,2);
\end{knot}
\end{tikzpicture}
\caption{At a positive crossing}
\end{subfigure}
\begin{subfigure}{0.4\textwidth}
\centering
\begin{tikzpicture}[scale=0.6]
\node at (1.1,1.2) {{\small $x*^{-1}y$}};
\node at (-1.2,0.4) {{\small $y$}};
\node at (0.4,-1.2) {{\small $x$}};
\begin{knot}[clip width=6, clip radius=4pt]
\strand[->] (2,0)--(-2,0);
\strand[->] (0,-2)--(0,2);
\end{knot}
\end{tikzpicture}
\caption{At a negative crossing}
\end{subfigure}
\caption{Relations at a positive and at a negative crossing}
\label{fig1}
\end{figure}
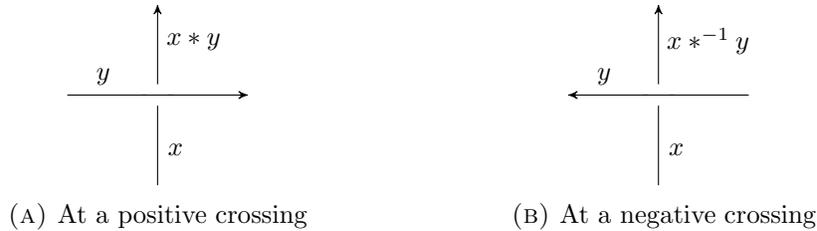
\end{itemize}

A \textit{homomorphism of quandles} $P$ and $Q$ is a map $\phi:P\to Q$ with $\phi(x*y)=\phi(x)*\phi(y)$ for all $x,y\in P$. By cancellation in $P$ and $Q$, we obtain $\phi(x*^{-1}y)=\phi(x)*^{-1}\phi(y)$ for all $x,y\in P$. We denote the group of all automorphisms of a quandle $Q$ by $\Aut(Q)$. The quandle axioms are equivalent to saying that for each $y\in Q$, the map $S_y:Q\to Q$ (called the symmetry at $y$) given by $S_y(x)=x* y$ is an automorphism of $Q$ fixing $x$. The group $\Inn(Q)$ generated by such automorphisms is called the group of {\it inner automorphisms} of $Q$. 
\medskip

We recall some relevant properties of quandles. A quandle $Q$ is said to be 
\begin{itemize}
\item {\it connected} if the group $\Inn(Q)$ acts transitively on $Q$. For example, the dihedral quandle $\R_{2n+1}$ is connected, whereas $\R_{2n}$ is not. Henceforth, the term {\it orbit} would correspond to an orbit in $Q$ under the action of $\Inn(Q)$. 
\item {\it involutory} if $x*^{-1}y=x* y$ for all $x,y\in Q$. For example, the core quandle $\Core(G)$ of any group $G$ is involutory. One can define the involutory quotient of a given quandle $Q$ as the quandle obtained from $Q$ by imposing relations of the form $x*^{-1}y=x* y$ for all $x, y \in Q$.
\item {\it commutative} if $x*y=y*x$ for all $x,y\in Q$.
\item {\it quasi-commutative} if for given $x,y\in Q$, at least one of the following hold: $x*y=y*x$, $x*y=y*^{-1}x$, $x*^{-1}y=y*x$ or $x*^{-1}y=y*^{-1}x$. Obviously, every commutative quandle is quasi-commutative. The Alexander quandle $\Alex(\mathbb{R},\phi_2)$ of Example \ref{ex1} is quasi-commutative but not commutative.
\item {\it latin} if the left multiplication $L_x:Q\to Q$ defined by $L_x(y)=x* y$ is a bijection for each $x\in Q$. 
\item {\it semi-latin} if $L_x$ is an injective map for each $x\in Q$. Each latin quandle is obviously semi-latin, but the converse is not true in general. For example, the quandle $\Core(\mathbb{Z})$ is semi-latin but not latin. 
\item {\it simple} if for any quandle $P$, every homomorphism $Q \to P$ is either injective or constant. For example, the dihedral quandle $\R_3$ is commutative, latin and simple. On the other hand, the dihedral quandle $\R_{2n}$ is not commutative or latin or simple. 
\end{itemize}
\medskip

The \textit{enveloping group} $\Env(Q)$ of a quandle $Q$ is the group with the set of generators as $\{\tilde{x}~|~x \in Q\}$ and the defining relations as
\begin{equation*}
\widetilde{x*y}=\tilde{y}^{-1} \tilde{x} \tilde{y}
\end{equation*}
for all $x,y\in Q$. For example, if $Q$ is a trivial quandle, then $\Env(Q)$ is the free abelian group of rank equal to the cardinality of $Q$. By \cite{j1,j2}, the enveloping group of the link quandle $Q(L)$ of a link $L$ is the link group $G(L)$ of $L$. The natural map
\begin{equation*}
\eta: Q \to \Env(Q)
\end{equation*}
given by $\eta(x)=\tilde{x}$ is a quandle homomorphism from $Q$ to $\Conj (\Env(Q))$. The map $\eta$ is not injective in general. The presentation of the enveloping group of a quandle can be reduced as follows \cite[Theorem 5.1.7]{w}.

\begin{theorem}\label{th1}
Let $Q$ be a quandle with a presentation $Q= \langle X~~|~~R \rangle$. Then its enveloping group has a presentation $\Env(Q)\cong\langle \tilde{x}, ~~x \in X~~|~~\tilde{R} \rangle$, where $\tilde{R}$ consists of relations obtained from relations in $R$ with an expression $x*y$ replaced by ${\tilde{y}}^{\,-1}\tilde{x}\tilde{y}$ and an expression $x*^{-1}y$ replaced by $\tilde{y}\tilde{x}{\tilde{y}}^{\,-1}$.
\end{theorem}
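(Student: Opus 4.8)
The plan is to produce mutually inverse group homomorphisms between $\Env(Q)$ and the group $G := \langle\, \tilde x,\ x\in X \ \mid\ \tilde R\,\rangle$, playing the universal property of the enveloping group against the universal property of the free group on $X$. (Equivalently one can run a sequence of Tietze transformations on the ``large'' presentation of $\Env(Q)$ eliminating the redundant generators $\tilde x$ for $x\notin X$, but the map-theoretic argument is cleaner to make rigorous.)

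First I would fix bookkeeping. Since $Q = Q\langle X\mid R\rangle$ is the quotient of the free quandle on $X$ by the congruence generated by $R$, every element of $Q$ is the class $\overline{w}$ of some quandle word $w$ in the letters $X$, and every relation of $R$ has the form $w_1 = w_2$ with $w_i$ such words. To each quandle word $w$ in $X$ I associate a \emph{group translation} $\widetilde{w}$, a word in the free group on $\{\tilde x : x\in X\}$, defined recursively by $\widetilde{x}=\tilde x$ for $x\in X$, $\widetilde{u*v}=\tilde v^{-1}\widetilde{u}\,\tilde v$ and $\widetilde{u*^{-1}v}=\tilde v\,\widetilde{u}\,\tilde v^{-1}$. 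The one load-bearing lemma is the compatibility of this translation with the natural map $\eta\colon Q\to\Conj(\Env(Q))$: for every quandle word $w$ in $X$, $\eta(\overline{w})=\widetilde{w}$ as elements of $\Env(Q)$. This is an induction on the structure of $w$, using that $\eta$ is a quandle homomorphism into $\Conj(\Env(Q))$ and the defining relations $\widetilde{x*y}=\tilde y^{-1}\tilde x\tilde y$ of $\Env(Q)$ (together with their $*^{-1}$ consequence).

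Granting the lemma, I build $\varphi\colon G\to\Env(Q)$ by $\tilde x\mapsto\eta(x)$ for $x\in X$; it is well defined because a relator of $\tilde R$ coming from $w_1=w_2\in R$ maps to $\widetilde{w_1}\,\widetilde{w_2}^{-1}=\eta(\overline{w_1})\eta(\overline{w_2})^{-1}=1$, since $\overline{w_1}=\overline{w_2}$ in $Q$. Conversely I define a quandle homomorphism $Q\to\Conj(G)$ by $x\mapsto\tilde x$ on generators; it respects $R$ because a relation $w_1=w_2$ of $R$ becomes the identity $\widetilde{w_1}=\widetilde{w_2}$ in $\Conj(G)$, which holds exactly because $\tilde R$ contains $\widetilde{w_1}\,\widetilde{w_2}^{-1}$. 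The universal property of $\Env(Q)$ then promotes this to a group homomorphism $\psi\colon\Env(Q)\to G$ with $\psi(\tilde x)=\tilde x$ for all $x\in Q$ (where for $x\notin X$ the right-hand side is the translated word of any representative of $x$).

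Finally I would check $\varphi\psi=\id$ and $\psi\varphi=\id$ on generators: $\psi\varphi$ fixes $\tilde x$ for every $x\in X$, hence is $\id_G$; and $\varphi\psi$ fixes $\eta(x)$ for every $x\in X$, hence is $\id_{\Env(Q)}$ because these elements generate $\Env(Q)$ --- indeed every generator $\eta(x)$, $x\in Q$, equals $\widetilde{w}$ for a representative $w$ of $x$ by the lemma, a group word in the $\eta(x')$, $x'\in X$. The step I expect to need the most care is precisely the translation lemma and the preliminary point that a quandle presentation is to be read as a quotient by a \emph{congruence}, so that the recursion on quandle words is legitimate; the rest is a routine chase through universal properties.
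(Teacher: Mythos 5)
The paper does not actually prove this statement: it is quoted verbatim from Winker's thesis \cite[Theorem 5.1.7]{w}, so there is no in-paper argument to compare yours against. On its own merits your proposal is correct and complete in outline. The skeleton is the standard one: the structural induction establishing $\eta(\overline{w})=\widetilde{w}$ in $\Env(Q)$ for every quandle term $w$ in the letters of $X$; the well-definedness of $\varphi\colon G\to\Env(Q)$ because each relator of $\tilde R$ evaluates to $\eta(\overline{w_1})\eta(\overline{w_2})^{-1}=1$; the factorization of $x\mapsto\tilde x$ through the congruence generated by $R$ to obtain a quandle homomorphism $Q\to\Conj(G)$, promoted to $\psi\colon\Env(Q)\to G$ by the adjunction between $\Env$ and $\Conj$; and the check of both composites on generating sets, using the translation lemma once more to see that $\{\eta(x):x\in X\}$ generates $\Env(Q)$. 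You correctly identify the two points that need care: that a presented quandle is a quotient by the \emph{congruence} generated by $R$ (so one must observe that killing the listed relations suffices, which is automatic since the kernel of a quandle homomorphism is a congruence), and that the translation must be defined on terms rather than on elements so that the recursion is legitimate. Your parenthetical alternative --- eliminating the redundant generators $\tilde q$, $q\in Q\setminus\eta(X)$, from the ``large'' presentation of $\Env(Q)$ by Tietze transformations --- is essentially Winker's original route; the universal-property version you chose is equivalent and, as you say, easier to make airtight.
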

\par

If $Q$ is a quandle, then by \cite[Lemma 4.4.7]{w}, we can write
\begin{equation*}
x*^d\left(y*^e z\right)=\left(\left(x*^{-e} z\right)*^dy\right)*^ez\quad\textrm{(called the {\it left association identity})}
\end{equation*}
for all $x,y,z\in Q$ and $d,e\in\{-1,1\}$. Henceforth, we will write a left-associated product
\begin{equation*}
\left(\left(\cdots\left(\left(a_0*^{e_1}a_1\right)*^{e_2}a_2\right)*^{e_3}\cdots\right)*^{e_{n-1}}a_{n-1}\right)*^{e_n}a_n
\end{equation*}
simply as 
\begin{equation*}
a_0*^{e_1}a_1*^{e_2}\cdots*^{e_n}a_n.
\end{equation*}

A repeated use of left association identity gives the following result \cite[Lemma 4.4.8]{w}.

\begin{lemma}\label{lem4}
The product 
\begin{equation*}
\left(a_0*^{d_1}a_1*^{d_2}\cdots*^{d_m}a_m\right)*^{e_0}\left(b_0*^{e_1}b_1*^{e_2}\cdots*^{e_n}b_n\right)
\end{equation*}
\noindent of two left-associated forms $a_0*^{d_1}a_1*^{d_2}\cdots*^{d_m}a_m$ and $b_0*^{e_1}b_1*^{e_2}\cdots*^{e_n}b_n$ in a quandle can again be written in a left-associated form as
\begin{equation*}
a_0*^{d_1}a_1*^{d_2}\cdots*^{d_m}a_m*^{-e_n}b_n*^{-e_{n-1}}b_{n-1}*^{-e_{n-2}}\cdots*^{-e_1}b_1*^{e_0}b_0*^{e_1}b_1*^{e_2}\cdots*^{e_n}b_n.
\end{equation*}
\end{lemma}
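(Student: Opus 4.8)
The plan is to argue by induction on $n$, the length of the second factor $b_0*^{e_1}b_1*^{e_2}\cdots*^{e_n}b_n$, with the left association identity serving as the sole mechanism of the proof. Throughout I write $A=a_0*^{d_1}a_1*^{d_2}\cdots*^{d_m}a_m$ for the first factor, and the point to keep in mind is that $A$, and every other left-associated subword appearing below, is simply an element of $Q$; so the left association identity $x*^{d}(y*^{e}z)=\big((x*^{-e}z)*^{d}y\big)*^{e}z$, although stated for single elements, applies verbatim with $x$, $y$, $z$ taken to be whole left-associated words.

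For the base case $n=0$ the second factor is the single element $b_0$, and $A*^{e_0}b_0=a_0*^{d_1}a_1*^{d_2}\cdots*^{d_m}a_m*^{e_0}b_0$ is already in the asserted left-associated form, since there are no terms $b_i$ with $i\ge 1$ to be reversed and prepended. The case $n=1$ is exactly the left association identity applied with $x=A$, $y=b_0$, $z=b_1$, $d=e_0$, $e=e_1$, which also serves as a convenient check on the sign bookkeeping.

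For the inductive step, assume the formula holds whenever the second factor has length at most $n-1$, and write $b_0*^{e_1}b_1*^{e_2}\cdots*^{e_n}b_n=B'*^{e_n}b_n$ with $B'=b_0*^{e_1}b_1*^{e_2}\cdots*^{e_{n-1}}b_{n-1}$. Applying the left association identity with $x=A$, $y=B'$, $z=b_n$, $d=e_0$, $e=e_n$ gives
$$A*^{e_0}\big(B'*^{e_n}b_n\big)=\Big(\big(A*^{-e_n}b_n\big)*^{e_0}B'\Big)*^{e_n}b_n.$$
Now $A*^{-e_n}b_n=a_0*^{d_1}a_1*^{d_2}\cdots*^{d_m}a_m*^{-e_n}b_n$ is itself a left-associated form (of length $m+1$) and $B'$ has length $n-1$, so the induction hypothesis rewrites $\big(A*^{-e_n}b_n\big)*^{e_0}B'$ in left-associated form as
$$a_0*^{d_1}a_1*^{d_2}\cdots*^{d_m}a_m*^{-e_n}b_n*^{-e_{n-1}}b_{n-1}*^{-e_{n-2}}\cdots*^{-e_1}b_1*^{e_0}b_0*^{e_1}b_1*^{e_2}\cdots*^{e_{n-1}}b_{n-1}.$$
Re-appending $*^{e_n}b_n$ then produces precisely the claimed expression, which completes the induction.

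I do not anticipate a genuine obstacle: the only care required is in the index and sign bookkeeping, specifically in observing that peeling $b_n$ off the right end of the second factor via the left association identity inserts $*^{-e_n}b_n$ immediately after $a_m$ and leaves behind the strictly shorter factor $B'$ with all of its exponents unchanged, which is exactly the shape consumed by the induction hypothesis.
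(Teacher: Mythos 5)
Your induction on $n$ is correct: the base cases are right, the single application of the left association identity with $x=A$, $y=B'$, $z=b_n$ peels off $b_n$ exactly as you describe, and the sign and index bookkeeping checks out. This is precisely the argument the paper intends --- it states that the lemma follows from ``a repeated use of the left association identity'' and cites Winker's thesis rather than writing out the induction, so your proof just makes that same approach explicit.
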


Thus, any product of elements of a quandle $Q$ can be expressed in the canonical left-associated form $a_0*^{e_1}a_1*^{e_2}\cdots*^{e_n}a_n$, where $a_0\neq a_1$, and for $i=1,2,\ldots,n-1$, $e_i=e_{i+1}$ whenever $a_i=a_{i+1}$.
\medskip

We now define orderability of quandles.

\begin{definition} 
A quandle $Q$ is said to be \textit{left-orderable} if there is a (strict) linear order $<$ on $Q$ such that $x<y$ implies $z*x<z*y$ for all $x,y,z\in Q$. Similarly, a quandle $Q$ is \textit{right-orderable} if there is a linear order $<^\prime$ on $Q$ such that $x<^\prime y$ implies $x*z<^\prime y*z$ for all $x,y,z\in Q$. A quandle is \textit{bi-orderable} if it has a linear order with respect to which it is both left and right ordered.
\end{definition}

For example, a trivial quandle with more than one element is right-orderable but not left-orderable. If $Q=\left\{x_1,x_2,\ldots\right\}$ is a trivial quandle, then it is clear that the linear order $x_1<x_2<\cdots$ is preserved under multiplication on the right, but is not preserved under multiplication on the left. Notice the contrast to groups where left-orderability implies right-orderability and vice-versa.

\begin{proposition}\label{prop9}
Any non-trivial left or right orderable quandle is infinite.
\end{proposition}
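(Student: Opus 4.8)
The plan is to establish the contrapositive: every finite non-trivial quandle is neither left- nor right-orderable. The only ingredient beyond the axioms is the elementary observation that an order-preserving bijection of a finite linearly ordered set is the identity; this follows by a short induction, since the least element is forced to be mapped to the least element, after which one restricts to the complement and repeats.

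First I would treat the right-orderable case, which is the cleaner one. Suppose $Q$ is finite with a right order $<'$. For each $z\in Q$, axiom Q2 says that right multiplication $S_z\colon x\mapsto x*z$ is a bijection of $Q$, and right-orderability says that $x<'y$ implies $S_z(x)<'S_z(y)$; thus $S_z$ is an order-preserving bijection of the finite chain $(Q,<')$, and so $S_z=\id_Q$. This means $x*z=x$ for all $x,z\in Q$, i.e. $Q$ is trivial, contrary to assumption.

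Next I would treat the left-orderable case, where one extra step is needed because in a quandle left multiplications are not automatically bijective. Suppose $Q$ is finite with a left order $<$. For each $z\in Q$, left-orderability gives that $L_z\colon x\mapsto z*x$ is strictly increasing, hence injective, hence (by finiteness of $Q$) a bijection; by the observation above, $L_z=\id_Q$, so $z*x=x$ for all $x,z\in Q$. But then, fixing $x$, the map $S_x\colon z\mapsto z*x=x$ is constant, whereas $S_x$ must be a bijection by Q2; this forces $|Q|=1$, so $Q$ is trivial, again a contradiction.

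I do not expect a genuine obstacle: the argument is a couple of lines per case. The one subtlety worth flagging is precisely the left/right asymmetry — right multiplications are bijective by axiom Q2, but left multiplications need not be, so in the left-orderable case the injectivity of $L_z$ has to be extracted from the order before finiteness can be exploited.
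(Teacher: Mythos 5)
Your proof is correct and rests on the same mechanism as the paper's: an order-preserving injection of a finite chain into itself must be the identity, which the paper packages as the infinitude of the $\langle S_y\rangle$-orbit of $x$ (respectively of $\{L_y^n(x)\}$) by citing an external proposition. Your version is self-contained via the contrapositive, and you correctly isolate the only subtlety — that $L_z$ is injective because of the left order rather than because of axiom \textbf{Q2}, after which finiteness and \textbf{Q2} applied to $S_x$ force $|Q|=1$ — so nothing is missing.
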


\begin{proof}
Let $Q$ be a non-trivial quandle that is right-orderable. Then there exist elements $x \neq y$ in $Q$ such that $S_y(x)\neq x$. It follows from \cite[Proposition 3.7]{bps} that the $\left\langle S_y\right\rangle$-orbit of $x$ is infinite, and thus $Q$ must be infinite. On the other hand, if $Q$ is left-orderable, then by \cite[Proposition 3.7]{bps}, the set $\left\{L_y^n(x)\mid n=1,2,\ldots\right\}$ is infinite for any $x\neq y$ in $Q$, and hence $Q$ must be infinite.
\end{proof}

It also follows from \cite[Proposition 3.7]{bps} that a non-trivial involutory quandle is not right-orderable. A large number of left or right-orderable quandles can be constructed from bi-orderable groups. See \cite[Proposition 7]{DDHPV} and \cite[Proposition 3.4]{bps}.

\begin{proposition}\label{prop2}
The following hold for any bi-orderable group $G$:
\begin{enumerate}
\item $\Conj_n(G)$ is a right-orderable quandle.
\item $\Core(G)$ is a left-orderable quandle.
\item If $\phi\in \Aut(G)$ is an order reversing automorphism, then $\Alex(G,\phi)$ is a left-orderable quandle.
\end{enumerate}
\end{proposition}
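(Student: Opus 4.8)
The plan is to show that in each of the three cases the underlying linear order of a given bi-order on $G$ already serves as the desired quandle order, so that no new order has to be constructed. Fix once and for all a bi-order $<$ on $G$, that is, a linear order with $a<b\Rightarrow cac'<cbc'$ for all $a,b,c,c'\in G$. I will use two standard consequences of left- and right-invariance: inversion reverses the order, $a<b\iff b^{-1}<a^{-1}$; and inequalities may be multiplied, $a<b$ and $c<d$ imply $ac<bd$ (multiply the first on the right by $c$, the second on the left by $b$, and chain).

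For part (1), I claim $\Conj_n(G)$ is right-orderable by the very same order $<$. Given $x<y$ and any $z\in G$, I must check $x*z<y*z$, i.e.\ $z^{-n}xz^n<z^{-n}yz^n$; this follows by multiplying $x<y$ on the left by $z^{-n}$ and then on the right by $z^n$. For part (2), I claim $<$ makes $\Core(G)$ left-orderable. Since $z*x=xz^{-1}x$, given $x<y$ I need $xz^{-1}x<yz^{-1}y$: multiplying $x<y$ on the right first by $z^{-1}$ and then by $x$ gives $xz^{-1}x<yz^{-1}x$, while multiplying $x<y$ on the left by $yz^{-1}$ gives $yz^{-1}x<yz^{-1}y$, and chaining these yields the claim.

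For part (3), assume $\phi$ is order-reversing and again use $<$ on $\Alex(G,\phi)$. Since $z*x=\phi(zx^{-1})x$, given $x<y$ I must show $\phi(zx^{-1})x<\phi(zy^{-1})y$. From $x<y$ I get $y^{-1}<x^{-1}$ by reversal under inversion, hence $zy^{-1}<zx^{-1}$ by left-invariance, hence $\phi(zx^{-1})<\phi(zy^{-1})$ since $\phi$ reverses. Multiplying this inequality (on the $\phi$-factor) with the inequality $x<y$ (on the trailing factor) gives $\phi(zx^{-1})x<\phi(zy^{-1})y$, as required.

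None of these verifications is deep; the only point demanding care is the bookkeeping of directions in part (3), where a reversal coming from inversion is followed by a reversal coming from $\phi$, and one must resist the temptation to think the two simply cancel: a genuine (if short) argument is still needed to combine the resulting inequality on the $\phi$-factor with the original inequality on the trailing factor via multiplication of inequalities. I would also note that the fact that (1) yields right-orderability while (2) and (3) yield left-orderability is dictated by the shapes of the respective operations, in line with the earlier observation that one-sided orderability of a quandle need not entail the other.
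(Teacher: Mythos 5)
Your proof is correct, and the paper itself offers no argument here -- it simply quotes this as \cite[Proposition 3.4]{bps} -- so your direct verification that the given bi-order on $G$ is itself the required quandle order (using left/right-invariance, order reversal under inversion, and multiplication of inequalities) is exactly the standard argument the cited reference supplies. All three computations check out, including the careful handling of the two successive reversals in part (3).
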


The other sided orderability of these quandles fails in general \cite[Corollaries 3.8 and 3.9]{bps}.

\begin{proposition}
The following hold for any non-trivial group $G$:
\begin{enumerate}
\item The quandle $\Conj_n(G)$ is not left-orderable.
\item The quandle $\Core(G)$ is not right-orderable.
\item If $\phi\in \Aut(G)$ an involution, then the quandle $\Alex(G,\phi)$ is not right-orderable.
\end{enumerate}
\end{proposition}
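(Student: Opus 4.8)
I would split the argument into two independent pieces. Claim~(1) is a statement about left multiplications, whereas~(2) and~(3) are statements about right multiplications, and both of the latter will follow from the fact noted right after Proposition~\ref{prop9} that a non-trivial involutory quandle is not right-orderable.

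For~(1) the plan is to show that already \emph{injectivity} of left multiplications fails. In any left-orderable quandle $(Q,<)$ each map $L_z\colon w\mapsto z*w$ is strictly increasing, hence injective. But in $\Conj_n(G)$ one has $z*e=e^{-n}ze^{n}=z$ and $z*z=z$, so $L_z(e)=L_z(z)$; since $G$ is non-trivial we may pick $z\neq e$, and then $L_z$ is not injective. Hence $\Conj_n(G)$ admits no left order. Equivalently, one may invoke \cite[Proposition~3.7]{bps}: for $z\neq e$ the forward $L_z$-orbit of $e$ is the finite set $\{z\}$ although $L_z(e)=z\neq e$, which cannot happen in a left-orderable quandle.

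For~(2) and~(3) the plan is to verify that the two quandles are involutory and non-trivial and then quote the fact above. That $\Core(G)$ is involutory is already recorded in Section~\ref{definitions-known-results} (concretely $S_y^{2}=\id$ because $y\,(yx^{-1}y)^{-1}\,y=x$), and it is non-trivial whenever $G$ has an element $x$ with $x^{2}\neq e$, since then $x*e=x^{-1}\neq x$. For $\Alex(G,\phi)$ a one-line computation gives $x*^{-1}y=\phi^{-1}(xy^{-1})\,y$, so $x*^{-1}y=x*y$ for all $x,y$ exactly when $\phi=\phi^{-1}$, i.e.\ when $\phi^{2}=\id$; and if $\phi\neq\id$ then choosing $g$ with $\phi(g)\neq g$ gives $g*e=\phi(g)\neq g$, so $\Alex(G,\phi)$ is non-trivial. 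If one prefers a self-contained ending instead of the quoted fact: in a right-orderable quandle each $S_y$ is strictly increasing, an involutory quandle forces $S_y^{2}=\id$, and a strictly increasing involution of a linearly ordered set is the identity, contradicting $x*y\neq x$.

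I expect no deep obstacle; the only point that needs care is the non-triviality hypothesis in~(2). If $G$ is abelian of exponent~$2$ then $x*y=yx^{-1}y=x$ in $\Core(G)$, so $\Core(G)$ is a trivial quandle and hence right-orderable; thus~(2) must be read with the tacit assumption that $G$ has an element of order greater than~$2$ (equivalently, that $\Core(G)$ is a non-trivial quandle). With that proviso the whole proof reduces to elementary computations in $G$ together with the order-theoretic observations already made in the paper.
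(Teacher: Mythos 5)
Your proposal is correct, and it is worth noting that the paper itself offers no argument here: the proposition is simply quoted from \cite[Corollaries 3.8 and 3.9]{bps}. Your proof is a clean, self-contained replacement. For (1), the observation that left-orderability forces every $L_z$ to be injective, while $L_z(e)=z=L_z(z)$ in $\Conj_n(G)$ for any $z\neq e$, is exactly the right one-line obstruction and works uniformly in $n$ (including $n=0$, where the quandle is trivial). For (2) and (3), reducing to \textquotedblleft a non-trivial involutory quandle is not right-orderable\textquotedblright\ (recorded in the paper after Proposition \ref{prop9}) is the natural route, and your computations verifying that $\Core(G)$ is involutory and that $\Alex(G,\phi)$ is involutory precisely when $\phi^2=\id$ are correct, as is your self-contained ending via the fact that a strictly increasing involution of a linearly ordered set is the identity. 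Your caveat about (2) is a genuine and valuable catch: as literally stated the claim is false, since for $G$ a non-trivial elementary abelian $2$-group one has $x*y=yx^{-1}y=x$, so $\Core(G)$ is a trivial quandle and hence right-orderable; the correct hypothesis is that $G$ contain an element of order greater than $2$ (equivalently, that $\Core(G)$ be non-trivial). A parallel remark applies to (3) if \textquotedblleft involution\textquotedblright\ were read to include $\phi=\id$, though under the standard reading $\phi^2=\id$, $\phi\neq\id$, your non-triviality argument ($g*e=\phi(g)\neq g$ for some $g$) closes that case. In short: the proof is sound, and it improves on the paper by making the needed hypotheses explicit.
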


An immediate consequence of Proposition \ref{prop2} is the following.

\begin{corollary}
The following hold for any quandle $Q$:
\begin{enumerate}
\item If $Q$ is a subquandle of $\Conj_n(G)$ for some bi-orderable group $G$, then $Q$ is right-orderable. 
\item If $Q$ is a subquandle of $\Core(G)$ for some bi-orderable group $G$, then $Q$ is left-orderable. 
\end{enumerate}
\end{corollary}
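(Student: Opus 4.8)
The plan is to obtain both assertions by simply restricting, to the subquandle $Q$, the order furnished by Proposition \ref{prop2}. Recall that a subquandle $Q$ of a quandle $P$ is by definition a subset of the underlying set of $P$ that is closed under $*$ and $*^{-1}$, equipped with the induced operation; thus the inclusion $Q\hookrightarrow P$ is a quandle homomorphism, and every identity or inequality that holds among elements of $P$ holds a fortiori among elements of $Q$.

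For (1): since $G$ is bi-orderable, there is a linear order $<$ on the \emph{set} $G$ that is a bi-order of the group, and by Proposition \ref{prop2}(1) this same order $<$ on the set $\Conj_n(G)$ satisfies $x<y\Rightarrow x*z<y*z$ for all $x,y,z$. Let $<_Q$ be the restriction of $<$ to $Q\subseteq\Conj_n(G)$. It is clearly a linear order on $Q$, and if $x,y,z\in Q$ with $x<_Q y$, then $x<y$, hence $x*z<y*z$; as $Q$ is a subquandle, $x*z,y*z\in Q$, so $x*z<_Q y*z$. Therefore $Q$ is right-orderable. Statement (2) follows by the identical argument, now using Proposition \ref{prop2}(2) to produce a linear order on the set $\Core(G)$ compatible with left multiplication, and restricting it to $Q$.

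The argument involves no real obstacle; the only points worth emphasizing are that the orders supplied by Proposition \ref{prop2} live on the underlying set of $G$ itself, so that the restriction along $Q\hookrightarrow G$ is meaningful, and that left- and right-orderability are defined by implications quantified over all triples of elements, which therefore descend automatically to any subset closed under the quandle operations.
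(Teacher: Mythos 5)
Your proof is correct and is exactly the argument the paper intends: the corollary is stated as an immediate consequence of Proposition \ref{prop2}, obtained by restricting the right-order on $\Conj_n(G)$ (resp.\ the left-order on $\Core(G)$) to the subquandle, where the universally quantified order-compatibility condition descends automatically. No gaps.
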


\medskip

\section{Properties of linear orderings on quandles}\label{properties-orderable-quandle}
In this section, we analyze some basic properties of linear orderings on quandles. Observe that a quandle essentially has two binary operations $*$ and $*^{-1}$. Thus, it is necessary to understand behaviour of a linear order with respect to both of these binary operations.

\begin{definition}
Let $<$ be a linear order on a quandle $Q$ and $\mathcal{O}$ be the set $\left\{=,<,>\right\}$. For a quadruple $(\blackdiamond_1,\blackdiamond_2,\blackdiamond_3,\blackdiamond_4)\in\mathcal{O}^4$, the order $<$ is said to be of {\it type} $(\blackdiamond_1,\blackdiamond_2,\blackdiamond_3,\blackdiamond_4)$ if the following hold for $x,y,z\in Q$ with $x<y$:
\begin{enumerate}[(1)]
\item $x*z\,\blackdiamond_1\, y*z$,
\item $x*^{-1}z\,\blackdiamond_2\, y*^{-1}z$,
\item $z*x\,\blackdiamond_3\, z*y$,
\item $z*^{-1}x\,\blackdiamond_4\, z*^{-1}y$.
\end{enumerate}
We say that the order $<$ is of type $(\underline{\;\;},\blackdiamond_2,\underline{\;\;},\underline{\;\;})$ if the second condition is true, it is of type $(\blackdiamond_1,\underline{\;\;},\blackdiamond_3,\underline{\;\;})$ if the first and third conditions are true, it is of type $(\blackdiamond_1,\blackdiamond_2,\underline{\;\;},\blackdiamond_4)$ if the first, second and fourth conditions are true, etc.
\end{definition}

The axiom {\textbf Q2} implies that if $<$ is a linear order on a quandle $Q$, then 
\begin{equation}\label{eq17}
x*z\neq y*z\qquad\textrm{and}\qquad x*^{-1}z\neq y*^{-1}z
\end{equation}
for all $x,y,z\in Q$ with $x<y$.

\begin{lemma}\label{lem2}
Let $<$ be a linear order on a quandle $Q$ and $\blackdiamond\in\{<,>\}$. Then the order $<$ is of type $(\blackdiamond,\underline{\;\;},\underline{\;\;},\underline{\;\;})$ if and only if it is of type $(\underline{\;\;},\blackdiamond,\underline{\;\;},\underline{\;\;})$.
\end{lemma}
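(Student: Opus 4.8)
The plan is to prove both implications by a direct algebraic manipulation using the cancellation property (axiom {\textbf Q2}) together with its reformulation that $(x*z)*^{-1}z=x$ and $(x*^{-1}z)*z=x$ for all $x,z\in Q$. The key observation is that the maps $S_z\colon x\mapsto x*z$ and $S_z^{-1}\colon x\mapsto x*^{-1}z$ are mutually inverse bijections of $Q$, so for a linear order the statement "$S_z$ is monotone of type $\blackdiamond$" should be equivalent to "$S_z^{-1}$ is monotone of type $\blackdiamond$" — a monotone bijection has a monotone inverse of the same "direction" when $\blackdiamond$ is a symmetric comparison (which $<$ and $>$ both are, in the sense that $a<b \iff b>a$).

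First I would prove the forward direction: assume $<$ is of type $(\blackdiamond,\underline{\;\;},\underline{\;\;},\underline{\;\;})$, i.e.\ $x<y \implies x*z\,\blackdiamond\,y*z$ for all $x,y,z$. Fix $x<y$ and $z\in Q$; I want to determine the comparison between $x*^{-1}z$ and $y*^{-1}z$. By \eqref{eq17} these two elements are distinct, so exactly one of $x*^{-1}z < y*^{-1}z$ or $x*^{-1}z > y*^{-1}z$ holds. Suppose, for contradiction, that the comparison is the "wrong" one (the negation of $\blackdiamond$). Apply the hypothesis with the pair $x*^{-1}z, y*^{-1}z$ in place of $x,y$ (after reordering them so the smaller one is first) and with the same $z$: this gives a comparison between $(x*^{-1}z)*z = x$ and $(y*^{-1}z)*z = y$. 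Tracking the direction carefully, the assumed "wrong" ordering of $x*^{-1}z,y*^{-1}z$ together with type $\blackdiamond$ forces the "wrong" ordering of $x,y$, contradicting $x<y$. Hence $x*^{-1}z\,\blackdiamond\,y*^{-1}z$, which is exactly type $(\underline{\;\;},\blackdiamond,\underline{\;\;},\underline{\;\;})$.

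The reverse direction is symmetric: one runs the same argument with the roles of $*$ and $*^{-1}$ interchanged, using $(x*z)*^{-1}z=x$ instead. I expect the only delicate point to be bookkeeping the interaction between the choice of $\blackdiamond\in\{<,>\}$ and the need to reorder the pair before applying the hypothesis — concretely, one should check the two cases $\blackdiamond\,=\,<$ and $\blackdiamond\,=\,>$ separately (or phrase it uniformly by noting that $a\,\blackdiamond\,b \iff b\,\blackdiamond^{\mathrm{op}}\,a$ where $\blackdiamond^{\mathrm{op}}$ swaps $<$ and $>$), so that the sign manipulations genuinely close up into a contradiction rather than a triviality. No deeper structural input — not distributivity {\textbf Q3}, not idempotency {\textbf Q1} — is needed; everything follows from bijectivity of right multiplication and trichotomy of the linear order.
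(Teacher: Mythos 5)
Your proposal is correct and follows essentially the same route as the paper: assume the ``wrong'' comparison between $x*^{-1}z$ and $y*^{-1}z$ (respectively $x*z$ and $y*z$), apply the hypothesized monotonicity of right multiplication by $z$, and use $(x*^{-1}z)*z=x$ (respectively $(x*z)*^{-1}z=x$) to derive $x>y$, contradicting $x<y$; the case split on $\blackdiamond\in\{<,>\}$ that you flag is exactly the bookkeeping the paper carries out. No gap.
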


\begin{proof}
Let $\blackdiamond\in\{<,>\}$. Define $\blackdiamond^{-1}$ to be $>$ if $\blackdiamond$ is $<$ and define it as $<$ if $\blackdiamond$ is $>$. Furthermore, define $\blackdiamond^1$ as $\blackdiamond$. By \eqref{eq17}, we note that $x*z\,\blackdiamond^d\,y*z$ and $x*^{-1}z\,\blackdiamond^e\,y*^{-1}z$ for some $d,e\in\{-1,1\}$ whenever $x,y,z\in Q$ and $x<y$.\\
For the forward implication, suppose on the contrary that $x*^{-1}z\,\blackdiamond^{-1}\,y*^{-1}z$ for some $x,y,z\in Q$ with $x<y$. This implies that $\left(x*^{-1}z\right)*z\,\blackdiamond^{-1}\,\left(y*^{-1}z\right)*z$ if $\blackdiamond$ is $<$ and $\left(x*^{-1}z\right)*z\,\blackdiamond\,\left(y*^{-1}z\right)*z$ if $\blackdiamond$ is $>$, since the order $<$ is of type $(\blackdiamond,\underline{\;\;},\underline{\;\;},\underline{\;\;})$. In other words, $\left(x*^{-1}z\right)*z>\left(y*^{-1}z\right)*z$, that is, $x>y$, which is a contradiction.\\
For the backward implication, suppose on the contrary that $x*z\,\blackdiamond^{-1}\,y*z$ for some $x,y,z\in Q$ with $x<y$. This implies that $\left(x*z\right)*^{-1}z\,\blackdiamond^{-1}\,\left(y*z\right)*^{-1}z$ if $\blackdiamond$ is $<$ and $\left(x*z\right)*^{-1}z\,\blackdiamond\,\left(y*z\right)*^{-1}z$ if $\blackdiamond$ is $>$, since the order $<$ is of type $(\underline{\;\;},\blackdiamond,\underline{\;\;},\underline{\;\;})$. This gives $\left(x*z\right)*^{-1}z>\left(y*z\right)*^{-1}z$, that is, $x>y$, which is a contradiction.
\end{proof}

\begin{lemma}\label{lem3}
Let $<$ be a linear order on a quandle $Q$. Then the following statements are equivalent:
\begin{enumerate}[(1)]
\item The quandle $Q$ is trivial.
\item The order $<$ is of type $(\underline{\;\;},\underline{\;\;},=,\underline{\;\;})$.
\item The order $<$ is of type $(\underline{\;\;},\underline{\;\;},\underline{\;\;},=)$.
\end{enumerate}
\end{lemma}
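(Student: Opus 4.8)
The plan is to establish the cycle $(1)\Rightarrow(2)\Rightarrow(1)$ together with $(1)\Rightarrow(3)\Rightarrow(1)$. The two forward implications are immediate: if $Q$ is trivial then $z*a=z=z*^{-1}a$ for all $a,z\in Q$, so in particular $z*x=z*y$ and $z*^{-1}x=z*^{-1}y$ whenever $x<y$, which is precisely what it means for $<$ to be of type $(\underline{\;\;},\underline{\;\;},=,\underline{\;\;})$ and of type $(\underline{\;\;},\underline{\;\;},\underline{\;\;},=)$.

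The content is in the two reverse implications, and the single idea behind both is that a linear order compares every pair of distinct elements, so the hypothesis ``$x<y$'' appearing in the definition of these types effectively ranges over all ordered pairs of distinct elements. For $(2)\Rightarrow(1)$: assume $z*x=z*y$ for all $x,y,z\in Q$ with $x<y$. Fix $z\in Q$ and let $a\in Q$ be arbitrary. If $a\neq z$, then either $a<z$ or $z<a$; applying the hypothesis to this comparison, with the chosen $z$ in the subscript position, gives $z*a=z*z$ in either case, and $z*z=z$ by {\textbf Q1}. If $a=z$, then $z*a=z$ directly by {\textbf Q1}. Hence $z*a=z$ for all $a,z\in Q$, i.e. $Q$ is trivial. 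For $(3)\Rightarrow(1)$ the argument is identical, now using that $z*^{-1}z=z$ (which follows from idempotency and cancellation, as observed just after the quandle axioms) to conclude $z*^{-1}a=z*^{-1}z=z$ for all $a\neq z$, and hence $z*^{-1}a=z$ for all $a,z\in Q$. By the defining property of $*^{-1}$, namely $u*^{-1}v=w$ if and only if $u=w*v$, the identity $z*^{-1}a=z$ is equivalent to $z=z*a$, so once more $Q$ is trivial.

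The only bookkeeping point is the degenerate case $|Q|=1$: a one-element quandle is trivial, and conditions (2) and (3) hold vacuously, so the equivalence is unaffected. I do not anticipate any genuine obstacle here; in particular the distributivity axiom {\textbf Q3} is not used at all, the whole argument resting only on {\textbf Q1}, {\textbf Q2}, and the totality of the order.
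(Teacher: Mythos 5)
Your proposal is correct and follows essentially the same route as the paper: the forward implications are immediate, and the reverse implications use the totality of the order to compare any $a\neq z$ with $z$ itself, reducing to $z*z=z$ (resp. $z*^{-1}z=z$) via idempotency and cancellation. No issues.
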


\begin{proof}
It is trivial that (1) $\Rightarrow$ (2) and (1) $\Rightarrow$ (3).\\
For (2) $\Rightarrow$ (1), let $x$ and $y$ be any elements in $Q$. If $x=y$, then by idempotency, $x*y=x$. If $x<y$ or $y<x$, then by (2), $x*y=x*x=x$. This proves that $x*y=x$ for all $x,y\in Q$.\\
For (3) $\Rightarrow$ (1), let $x$ and $y$ be any elements in $Q$. If $x=y$, then by idempotency, $x*y=x$. If $x<y$ or $y<x$, then by (3), $x*^{-1}y=x*^{-1}x=x$. This implies that $(x*^{-1}y)*y=x*y$, and thus by cancellation, we get $x*y=x$. This proves that $x*y=x$ for all $x,y\in Q$.
\end{proof}

\begin{theorem}\label{th2}
Let $<$ be a linear order on a quandle $Q$ of type $(\blackdiamond_1,\blackdiamond_2,\blackdiamond_3,\blackdiamond_4)$ for some $(\blackdiamond_1,\blackdiamond_2,\blackdiamond_3,\blackdiamond_4)\in \mathcal{O}^4$. Then the following hold:
\begin{enumerate}[(1)]
\item $\blackdiamond_1,\blackdiamond_2\in\left\{<,>\right\}$.
\item $\blackdiamond_1$ and $\blackdiamond_2$ are the same.
\item The quandle $Q$ is trivial $\Leftrightarrow$ $\blackdiamond_3$ is the equality \textquoteleft\,$=$\textquoteright\, $\Leftrightarrow$ $\blackdiamond_4$ is the equality \textquoteleft\,$=$\textquoteright\,.
\item The quadruple $(\blackdiamond_1,\blackdiamond_2,\blackdiamond_3,\blackdiamond_4)$ is one of the following:\\
$(<,<,=,=)$,\quad $(<,<,<,>)$,\quad $(<,<,>,<)$\quad{or}\quad $(>,>,<,<)$.
\end{enumerate}
\end{theorem}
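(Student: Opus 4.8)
The plan is to settle the four assertions in the stated order, getting (1)--(3) almost immediately from \eqref{eq17}, Lemma \ref{lem2} and Lemma \ref{lem3}, and then deducing (4) from these together with two short computations that use only idempotency and cancellation. First I would dispose of the case $|Q|=1$: here $Q$ is trivial and every defining condition of a type is vacuously satisfied, so the order is in particular of type $(<,<,=,=)$. Hence assume $|Q|\geq2$ throughout, so that some pair $a<b$ is at hand.

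For (1): if $x<y$ then $x*z\neq y*z$ and $x*^{-1}z\neq y*^{-1}z$ by \eqref{eq17}, so neither $\blackdiamond_1$ nor $\blackdiamond_2$ can be the symbol $=$. For (2): by Lemma \ref{lem2}, since the order is of type $(\blackdiamond_1,\underline{\;\;},\underline{\;\;},\underline{\;\;})$ it is also of type $(\underline{\;\;},\blackdiamond_1,\underline{\;\;},\underline{\;\;})$; as it is moreover of type $(\underline{\;\;},\blackdiamond_2,\underline{\;\;},\underline{\;\;})$ by hypothesis, evaluating both facts on a single pair $x<y$ forces $\blackdiamond_1=\blackdiamond_2$, because a linear order relates the two distinct elements $x*^{-1}z$ and $y*^{-1}z$ in exactly one way. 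For (3): the statement ``$\blackdiamond_3$ is the symbol $=$'' is literally the statement that the order is of type $(\underline{\;\;},\underline{\;\;},=,\underline{\;\;})$, which by Lemma \ref{lem3} is equivalent to $Q$ being trivial; likewise ``$\blackdiamond_4$ is $=$'' is equivalent, via the third statement of Lemma \ref{lem3}, to $Q$ being trivial. (The implication from triviality also uses the trivial observation that in a trivial quandle $z*x=z=z*y$ and $z*^{-1}x=z=z*^{-1}y$, so there is no other choice for $\blackdiamond_3$ and $\blackdiamond_4$.)

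For (4) I would split on whether $Q$ is trivial. If it is, then $\blackdiamond_3$ and $\blackdiamond_4$ are $=$ by (3), and for the pair $a<b$ we have $a*z=a<b=b*z$, so $\blackdiamond_1$ and $\blackdiamond_2$ are both $<$; this gives the quadruple $(<,<,=,=)$. If $Q$ is non-trivial, then by (1)--(3) we have $\blackdiamond_1=\blackdiamond_2\in\{<,>\}$ and $\blackdiamond_3,\blackdiamond_4\in\{<,>\}$, so at most eight quadruples remain and five must be excluded. Fix any $a<b$. The two facts I would establish are: \textbf{(A)} the pattern $\blackdiamond_1$ and $\blackdiamond_3$ both $>$ is impossible; and \textbf{(B)} $\blackdiamond_4$ is forced by $\blackdiamond_2$ and $\blackdiamond_3$, namely $\blackdiamond_4$ is $>$ exactly when $\blackdiamond_2=\blackdiamond_3$. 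For (A): the first defining inequality $x*z\,\blackdiamond_1\,y*z$ applied with $(x,y,z)=(a,b,b)$ together with idempotency gives $a*b>b$, while the third defining inequality $z*x\,\blackdiamond_3\,z*y$ applied with $(x,y,z)=(a,b,a)$ together with idempotency gives $a>a*b$; combining these, $a>b$, a contradiction. For (B): from the third inequality with $(x,y,z)=(a,b,a)$ and idempotency we get the relation $a\,\blackdiamond_3\,(a*b)$; applying to it the map $u\mapsto u*^{-1}b$, whose effect on the order is recorded by $\blackdiamond_2$, and using the cancellation identity $(a*b)*^{-1}b=a$, we learn on which side of $a$ the element $a*^{-1}b$ lies, in terms of $\blackdiamond_2$ and $\blackdiamond_3$; on the other hand the fourth inequality $z*^{-1}x\,\blackdiamond_4\,z*^{-1}y$ with $(x,y,z)=(a,b,a)$ together with $a*^{-1}a=a$ records the same comparison in terms of $\blackdiamond_4$, and matching the two yields the rule in (B).

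To finish, I would feed (B) into the four sign patterns for $(\blackdiamond_1,\blackdiamond_3)$ allowed by $\blackdiamond_1=\blackdiamond_2\in\{<,>\}$ and $\blackdiamond_3\in\{<,>\}$: this produces exactly the quadruples $(<,<,<,>)$, $(<,<,>,<)$, $(>,>,<,<)$ and $(>,>,>,>)$, and the last is ruled out by (A). Together with the trivial case, this is the complete list claimed in (4). The only point requiring real care is the bookkeeping in (B): for each of the four combinations $(\blackdiamond_2,\blackdiamond_3)\in\{<,>\}^2$ one must correctly determine whether $a*^{-1}b$ lies below or above $a$ and check that it is consistent with the value of $\blackdiamond_4$ coming from the fourth inequality. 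Apart from this I foresee no real obstacle; in particular the argument never invokes the distributivity axiom Q3.
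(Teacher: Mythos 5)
Your proof is correct, and while parts (1)--(3) follow the paper exactly (via \eqref{eq17}, Lemma \ref{lem2} and Lemma \ref{lem3}), your treatment of part (4) takes a genuinely different route. The paper lists the eight candidate patterns with $\blackdiamond_1=\blackdiamond_2$ and $\blackdiamond_3,\blackdiamond_4\in\{<,>\}$ and eliminates five of them case by case: it writes out a cancellation argument for $(>,>,>,<)$, declares $(<,<,<,<)$, $(<,<,>,>)$ and $(>,>,<,>)$ to be ``similar'', and disposes of $(>,>,>,>)$ with a separate, longer computation that invokes the left-association identity of Lemma \ref{lem4}, hence the distributivity axiom. Your facts (A) and (B) replace all five eliminations uniformly: (B) --- the rule that $\blackdiamond_4$ is $>$ exactly when $\blackdiamond_2=\blackdiamond_3$, read off by locating $a*^{-1}b$ relative to $a$ in two ways --- already forces the three surviving non-trivial quadruples together with $(>,>,>,>)$, and (A) kills the latter via $a>a*b>b$. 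I checked the sign bookkeeping in (B) in all four cases $(\blackdiamond_2,\blackdiamond_3)\in\{<,>\}^2$ and it is consistent; the strictness $a\neq a*b$ (equivalently $a*^{-1}b\neq a$) needed to apply the second type condition to the pair $\{a,a*b\}$ is indeed guaranteed in the non-trivial case by part (3). What your approach buys is both economy (no case is left ``to the reader'') and a small gain in generality, since only idempotency, $x*^{-1}x=x$ and right cancellation are used, so axiom \textbf{Q3} never enters --- unlike the paper's exclusion of $(>,>,>,>)$. The one residual blemish, shared with the paper's own proof, is the degenerate case $|Q|=1$, where every type condition holds vacuously and statement (4) is only true in the weak sense that $(<,<,=,=)$ is among the types realized; your explicit separation of that case is if anything more careful than the original.
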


\begin{proof}
Assertion (1) follows from \eqref{eq17}, assertion (2) follows from Lemma \ref{lem2}, and assertion (3) follows from Lemma \ref{lem3}. 
\par

If $\blackdiamond_3$ or $\blackdiamond_4$ is the equality \textquoteleft\,$=$\textquoteright, then by (3), the quandle $Q$ is trivial. In this case, $(\blackdiamond_1,\blackdiamond_2,\blackdiamond_3,\blackdiamond_4)$ must be $(<,<,=,=)$. If $\blackdiamond_3,\blackdiamond_4\in\left\{<,>\right\}$, then by (1) and (2), $(\blackdiamond_1,\blackdiamond_2,\blackdiamond_3,\blackdiamond_4)$ must be one of the following quadruples:
\begin{align*}
\textrm{(a)}\;\,&(<,<,<,<),&\textrm{(b)}\;\,&(<,<,<,>),&\textrm{(c)}\;\,&(<,<,>,<),&\textrm{(d)}\;\,&(<,<,>,>),\\
\textrm{(e)}\;\,&(>,>,<,<),&\textrm{(f)}\;\,&(>,>,<,>),&\textrm{(g)}\;\,&(>,>,>,<),&\textrm{(h)}\;\,&(>,>,>,>).
\end{align*}
To prove assertion (4), we have to rule out the cases (a), (d), (f), (g) and (h). Let us begin by ruling out the case (g) first. Assume contrary that $(\blackdiamond_1,\blackdiamond_2,\blackdiamond_3,\blackdiamond_4)=(>,>,>,<)$. Let $x,y,z\in Q$ such that $x<y$. Then we have
\begin{align}
&z*x>z*y&&(\textrm{since}\,\blackdiamond_3\,\textrm{is}\,>)\\
\Rightarrow\quad&(z*x)*^{-1}x<(z*y)*^{-1}x&&(\textrm{since}\,\blackdiamond_2\,\textrm{is}\,>)\\
\Rightarrow\quad&z<z*y*^{-1}x. &&(\textrm{by right cancellation})\label{eq1}
\end{align}
Furthermore, we have
\begin{align}
&(z*y)*^{-1}x<(z*y)*^{-1}y&&(\textrm{since}\,\blackdiamond_4\,\textrm{is}\,<)\\
\Rightarrow\quad&z*y*^{-1}x<z. &&(\textrm{by right cancellation})
\end{align}
This is a contradiction to \eqref{eq1}. The cases (a), (d) and (f) can be ruled out similarly.
\par

Finally, we rule out the case (h). Assume contrary that $(\blackdiamond_1,\blackdiamond_2,\blackdiamond_3,\blackdiamond_4)=(>,>,>,>)$. Let $x,y,z\in Q$ such that $x<y$. Then we have
\begin{align}
&z*x>z*y&&(\textrm{since}\,\blackdiamond_3\,\textrm{is}\,>)\\
\Rightarrow\quad&x*^{-1}(z*x)<x*^{-1}(z*y)&&(\textrm{since}\,\blackdiamond_4\,\textrm{is}\,>)\\
\Rightarrow\quad&x*^{-1}z*x<x*^{-1}y*^{-1}z*y. &&(\textrm{by Lemma \ref{lem4}})\label{eq2}
\end{align}
Furthermore, since $\blackdiamond_3$ is $>$, we have
\begin{align}
&\left(x*^{-1}z\right)*x>\left(x*^{-1}z\right)*y.&&\label{eq3}
\end{align}
Combining \eqref{eq2} with \eqref{eq3}, we get
\begin{align}
&x*^{-1}z*y<x*^{-1}y*^{-1}z*y&&\\
\Rightarrow\quad&\left(x*^{-1}z*y\right)*^{-1}y>\left(x*^{-1}y*^{-1}z*y\right)*^{-1}y&&(\textrm{since}\,\blackdiamond_2\,\textrm{is}\,>)\\
\Rightarrow\quad&x*^{-1}z>x*^{-1}y*^{-1}z&&(\textrm{by right cancellation})\\
\Rightarrow\quad&\left(x*^{-1}z\right)*z<\left(x*^{-1}y*^{-1}z\right)*z&&(\textrm{since}\,\blackdiamond_1\,\textrm{is}\,>)\\
\Rightarrow\quad&x<x*^{-1}y.&&(\textrm{by right cancellation})\label{eq4}
\end{align}
Since $\blackdiamond_4$ is $>$, we also have
\begin{align}
&x=x*^{-1}x>x*^{-1}y,&&
\end{align}
which is a contradiction to \eqref{eq4}.
\end{proof}

\begin{corollary}
Let $<$ be a linear order on a quandle $Q$. Then the quandle $Q$ is trivial if and only if the order $<$ is of the type $(<,<,=,=)$.
\end{corollary}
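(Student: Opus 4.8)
The plan is to deduce this immediately from Theorem \ref{th2} (together with Lemma \ref{lem3}), treating the two implications separately; no idea beyond what those results already contain is needed.

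For the backward implication, I would argue as follows. Assume the order $<$ is of the type $(<,<,=,=)$. Then it is of a type in $\mathcal{O}^4$ whose third entry is the equality, so part (3) of Theorem \ref{th2} (equivalently, the implication (2) $\Rightarrow$ (1) of Lemma \ref{lem3}) applies and yields that $Q$ is trivial.

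For the forward implication, assume $Q$ is trivial, so that $a * b = a$ for all $a, b \in Q$, and hence also $a *^{-1} b = a$ for all $a, b \in Q$ (since $(a *^{-1} b) * b = a$ together with $a * b = a$ forces $a *^{-1} b = a$ by right cancellation). Now take arbitrary $x, y, z \in Q$ with $x < y$. Then $x * z = x < y = y * z$ and $x *^{-1} z = x < y = y *^{-1} z$, while $z * x = z = z * y$ and $z *^{-1} x = z = z *^{-1} y$. Thus conditions (1) and (2) in the definition of type hold with the symbol $<$ and conditions (3) and (4) hold with the symbol $=$, i.e. $<$ is of the type $(<,<,=,=)$, as wanted.

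I do not expect a genuine obstacle here: the substance of the corollary is already packaged in Theorem \ref{th2}, and the only hand computation is the one-line substitution $a * b = a = a *^{-1} b$ valid on a trivial quandle. As an alternative to that substitution for the forward direction, one could instead note that once $Q$ is trivial, part (3) of Theorem \ref{th2} forces $\blackdiamond_3$ to be the equality, and among the four quadruples listed in part (4) the only one with third entry $=$ is $(<,<,=,=)$; but the direct substitution seems cleanest.
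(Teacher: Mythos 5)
Your proof is correct and is essentially the argument the paper intends: the corollary is stated without proof as an immediate consequence of Lemma \ref{lem3} (for the backward direction) together with the direct verification that on a trivial quandle every linear order is of type $(<,<,=,=)$ (for the forward direction). One small caution about your proposed alternative for the forward direction: Theorem \ref{th2} presupposes that the order is already of \emph{some} type in $\mathcal{O}^4$, which for a trivial quandle is only guaranteed by the very substitution $a*b=a=a*^{-1}b$ you perform, so the direct computation you chose as your main argument is indeed the right route.
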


We remark that all the four possibilities for the quadruple $(\blackdiamond_1,\blackdiamond_2,\blackdiamond_3,\blackdiamond_4)$ in Theorem \ref{th2} (4) can be realized as we shall see in the following example.

\begin{example}\label{ex1}
Consider the additive group $(\mathbb{R},+)$ of real numbers. For a non-zero $u\in\mathbb{R}$, let $\phi_u$ be the automorphism of $\mathbb{R}$ given by $\phi_u(x)=ux$. Then for the Alexander quandle $\Alex(\mathbb{R},\phi_u)$, the quandle operation $*$ and the dual operation $*^{-1}$ are given by $x*y=ux+(1-u)y$ and $x*^{-1} y=u^{-1}x+\left(1-u^{-1}\right)y$ for $x,y\in \Alex(\mathbb{R},\phi_u)$. With the usual linear order $<$ on $\mathbb{R}$, one can check the following:
\begin{itemize}
\item If $0<u<1$, then $<$ is a bi-ordering for $\Alex(\mathbb{R},\phi_u)$.
\item If $u<1$, then $<$ is a left-ordering for $\Alex(\mathbb{R},\phi_u)$.
\item If $0<u$, then $<$ is a right-ordering for $\Alex(\mathbb{R},\phi_u)$.
\end{itemize}
Further, the following properties of the ordering on $\Alex(\mathbb{R},\phi_u)$ can be checked easily.
\begin{itemize}
\item If $u=1$, then the order $<$ is of the type $(<,<,=,=)$.
\item If $0<u<1$, then the order $<$ is of the type $(<,<,<,>)$.
\item If $1<u$, then the order $<$ is of the type $(<,<,>,<)$.
\item If $u<0$, then the order $<$ is of the type $(>,>,<,<)$.
\end{itemize}
\end{example}

\begin{remark}
Question 3.6 in \cite{bps} asks whether there exists an infinite non-commutative bi-orderable quandle. One can see that for $u\in(0,1)\setminus\left\{1/2\right\}$, the quandle $\Alex(\mathbb{R},\phi_u)$ with the usual order $<$ on $\mathbb{R}$ is an infinite non-commutative bi-orderable quandle, thereby answering the question in an affirmative.
\end{remark}

\begin{proposition}\label{prop3}
Let $<$ be a linear order on a quandle $Q$. Then the order $<$ is a bi-ordering on $Q$ if and only if it is of the type $(<,<,<,>)$.
\end{proposition}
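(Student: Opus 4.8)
The plan is to reduce everything to the definition of \emph{type}. The key preliminary observation is that saying $<$ is a bi-ordering on $Q$ means exactly that $Q$ is both left and right ordered with respect to $<$, which in turn means exactly that conditions (1) and (3) in the definition of type hold with the symbol $<$; that is, $<$ is of type $(<,\underline{\;\;},<,\underline{\;\;})$. With this in hand the reverse implication is immediate: if $<$ is of type $(<,<,<,>)$, then in particular conditions (1) and (3) hold with $<$, so $Q$ is both left and right ordered by $<$, i.e.\ $<$ is a bi-ordering.

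For the forward implication I would start from a bi-ordering $<$, so conditions (1) and (3) already hold with $<$, and then fill in the remaining two slots $\blackdiamond_2,\blackdiamond_4$. Since condition (1) holds with $<$, the order is of type $(<,\underline{\;\;},\underline{\;\;},\underline{\;\;})$, and Lemma \ref{lem2} (applied with $\blackdiamond={<}$) promotes this to type $(\underline{\;\;},<,\underline{\;\;},\underline{\;\;})$, i.e.\ condition (2) holds with $<$. So $\blackdiamond_1=\blackdiamond_2=\blackdiamond_3={<}$, and it remains only to determine $\blackdiamond_4$; I expect this to be the one step needing a genuine, if small, argument.

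To handle $\blackdiamond_4$, I would fix $x,y,z\in Q$ with $x<y$, set $a=z*^{-1}x$ and $b=z*^{-1}y$, and extract from the definition of $*^{-1}$ the identity $a*x=z=b*y$. From condition (3) one sees $a\neq b$: if $a=b$ then $a*x=z=a*y$ would contradict $a*x<a*y$, which follows from condition (3) applied to $x<y$. Hence either $a<b$ or $a>b$; and if $a<b$, then condition (1) gives $a*x<b*x$ and condition (3) applied to $x<y$ gives $b*x<b*y$, so that $z=a*x<b*y=z$, a contradiction. Therefore $a>b$, i.e.\ $z*^{-1}x>z*^{-1}y$, and since $x<y$ and $z$ were arbitrary, condition (4) holds with $>$. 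Thus $<$ is of type $(<,<,<,>)$. (Once one knows the type is well-defined, one could instead finish by citing Theorem \ref{th2}(4), since $(<,<,<,>)$ is the only admissible quadruple there with $\blackdiamond_1=\blackdiamond_3={<}$.)

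The whole proof is short; the only place requiring an actual idea is the last paragraph, where conditions (1) and (3) are played off against the identity $a*x=b*y$ coming from $a=z*^{-1}x$ and $b=z*^{-1}y$.
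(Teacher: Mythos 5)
Your proposal is correct and follows essentially the same route as the paper: both directions reduce to the definition of type, Lemma \ref{lem2} supplies $\blackdiamond_2={<}$, and the sign of $\blackdiamond_4$ is pinned down by playing the left and right orderings against the identities $(z*^{-1}x)*x=z=(z*^{-1}y)*y$ to force a contradiction unless $z*^{-1}x>z*^{-1}y$. Your explicit treatment of the case $z*^{-1}x=z*^{-1}y$ is a small point the paper passes over silently, but the argument is the same.
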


\begin{proof}
It is trivial that if the ordering $<$ is of the type $(<,<,<,>)$, then it is a bi-ordering on $Q$. Conversely, suppose that $<$ is a bi-ordering on $Q$. Then we can say that the ordering is of type $(<,\underline{\;\;},<,\underline{\;\;})$. It follows from Lemma \ref{lem2} that the ordering $<$ is of the type $(<,<,<,\underline{\;\;})$. Now, suppose on the contrary that $<$ is not of the type $(<,<,<,>)$. Then $z*^{-1}x<z*^{-1}y$\, for some $x,y,z\in Q$ with $x<y$. Since $<$ is a left ordering on $Q$, we have
\begin{align}
&\left(z*^{-1}y\right)*x<\left(z*^{-1}y\right)*y,
\end{align}
which using right cancellation yields 
\begin{align}\label{eq16}
\Rightarrow\quad&z*^{-1}y*x<z. 
\end{align}
Also, $<$ being a right ordering gives
\begin{align}
&\left(z*^{-1}x\right)*x<\left(z*^{-1}y\right)*x,
\end{align}
which by right cancellation gives
\begin{align}
\Rightarrow\quad&z<z*^{-1}y*x.
\end{align}
But, this contradicts \eqref{eq16}.
\end{proof}

\begin{proposition}\label{prop10}
Let $<$ be a bi-ordering on a quandle $Q$. If $x,y \in Q$ are distinct elements, then
\begin{enumerate}[(1)]
\item $x*^{-1}y\,\blackdiamond\, x\,\blackdiamond\, x*y\,\blackdiamond\, y\,\blackdiamond\, y*^{-1}x$\quad{and}
\item $x*^{-1}y\,\blackdiamond\, x\,\blackdiamond\, y*x\,\blackdiamond\, y\,\blackdiamond\, y*^{-1}x$
\end{enumerate}
for some $\blackdiamond\in\{<,>\}$.
\end{proposition}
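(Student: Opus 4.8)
The plan is to reduce everything to Proposition \ref{prop3}, which tells us that a bi-ordering $<$ on $Q$ must be of type $(<,<,<,>)$; explicitly, for all $a,b,z\in Q$ with $a<b$ we have (i) $a*z<b*z$, (ii) $a*^{-1}z<b*^{-1}z$, (iii) $z*a<z*b$, and (iv) $z*^{-1}a>z*^{-1}b$. Since replacing $<$ by its reverse order gives, directly from the definitions, another bi-ordering on $Q$, I would first note that it is enough to handle the case $x<y$ and show that both chains then hold with $\blackdiamond$ equal to $<$; applying this to the reversed order covers the case $y<x$ and yields both chains with $\blackdiamond$ equal to $>$.

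The heart of the proof is to break each chain into its four consecutive two-term comparisons and, in each one, to rewrite whichever of $x,y$ occurs ``bare'' using the quandle identities $x=x*x$, $x=x*^{-1}x$, $y=y*y$, $y=y*^{-1}y$ (idempotency with cancellation), so that the comparison becomes a single instance of one of the clauses (i)--(iv). Assuming $x<y$: for chain (1), $x*^{-1}y<x*^{-1}x=x$ by (iv); $x=x*x<x*y$ by (iii); $x*y<y*y=y$ by (i); and $y=y*^{-1}y<y*^{-1}x$ by (iv). Chain (2) has the same first and last comparisons, and its two middle ones are $x=x*x<y*x$ by (i) and $y*x<y*y=y$ by (iii). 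Every inequality produced is strict because the type is $(<,<,<,>)$ with all four symbols in $\{<,>\}$, so the chains have a uniform direction, which gives the required $\blackdiamond$.

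I do not anticipate a real obstacle: granted Proposition \ref{prop3}, the verification is short and essentially mechanical. The only matters needing attention are the reduction step — one must observe that passing to the reverse order reverses the direction of the conclusion but keeps each of the two chains as a chain (it does not swap chain (1) with chain (2), unlike a relabelling $x\leftrightarrow y$ would) — and, at each link, the bookkeeping of choosing $a*a$ versus $a*^{-1}a$ as the rewriting of the bare element so that the clause involving the matching operation ($*$ or $*^{-1}$) applies. Spelling out the six distinct comparisons as above makes both points transparent.
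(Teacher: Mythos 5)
Your proof is correct and follows essentially the same route as the paper: invoke Proposition \ref{prop3} to get the type $(<,<,<,>)$, rewrite each bare occurrence of $x$ or $y$ as $x*x$, $x*^{-1}x$, $y*y$ or $y*^{-1}y$, and read off the six two-term comparisons. The only cosmetic difference is that you reduce to the case $x<y$ via the reversed order, whereas the paper treats both cases at once with the $\blackdiamond$, $\blackdiamond^{-1}$ bookkeeping; the substance is identical.
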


\begin{proof}
Since $x \neq y$, we have $x\,\blackdiamond\, y$ for some $\blackdiamond\in\{<,>\}$. By Proposition \ref{prop3} and axiom {\textbf Q1}, we have
\begin{align*}
\textrm{(a)}\;\,&x=x*^{-1}x\,\blackdiamond^{-1}\, x*^{-1}y,&\textrm{(b)}\;\,&x=x*x\,\blackdiamond\, x*y,&\textrm{(c)}\;\,&x*y\,\blackdiamond\, y*y=y,\\
\textrm{(d)}\;\,&y*^{-1}x\,\blackdiamond^{-1}\, y*^{-1}y=y,&\textrm{(e)}\;\,&x=x*x\,\blackdiamond\, y*x,&\textrm{(f)}\;\,&y*x\,\blackdiamond\, y*y=y.
\end{align*}
Combining the preceding inequalities give the desired result.
\end{proof}

\begin{corollary}
A quasi-commutative bi-orderable quandle is commutative.
\end{corollary}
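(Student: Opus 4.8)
The plan is to feed the four defining alternatives of quasi-commutativity into the positional information supplied by Proposition \ref{prop10}: in a bi-ordering, $x*y$ and $y*x$ always sit strictly between $x$ and $y$, while $x*^{-1}y$ sits strictly below and $y*^{-1}x$ strictly above them. This will show that three of the four alternatives are incompatible with a bi-ordering, leaving only the commutativity relation $x*y=y*x$.

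First I would take distinct $x,y\in Q$; after relabelling we may assume $x<y$, so the symbol $\blackdiamond$ of Proposition \ref{prop10} is $<$ for this pair. Proposition \ref{prop10} then yields the two chains
\[
x*^{-1}y<x<x*y<y<y*^{-1}x \qquad\text{and}\qquad x*^{-1}y<x<y*x<y<y*^{-1}x .
\]
From these I read off: $x*y<y<y*^{-1}x$, hence $x*y\neq y*^{-1}x$; $x*^{-1}y<x<y*x$, hence $x*^{-1}y\neq y*x$; and $x*^{-1}y<x<y<y*^{-1}x$, hence $x*^{-1}y\neq y*^{-1}x$. Since $Q$ is quasi-commutative, at least one of $x*y=y*x$, $x*y=y*^{-1}x$, $x*^{-1}y=y*x$, $x*^{-1}y=y*^{-1}x$ must hold for the pair $\{x,y\}$; the last three have just been excluded, so $x*y=y*x$. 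As the equality $x*y=y*x$ holds trivially when $x=y$ (both sides equal $x$ by idempotency), this gives $x*y=y*x$ for all $x,y\in Q$, i.e. $Q$ is commutative.

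I do not anticipate a genuine obstacle: once Proposition \ref{prop10} is available, the argument is purely a matter of comparing the positions of the four products. The only point requiring a little care is that Proposition \ref{prop10} records the whole chain with a single symbol $\blackdiamond$, so one must first normalise to the case $x<y$ before extracting the strict inequalities used above.
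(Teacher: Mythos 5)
Your proof is correct and follows essentially the same route as the paper: both arguments invoke Proposition \ref{prop10} to place $x*y$, $y*x$, $x*^{-1}y$, $y*^{-1}x$ strictly along the order and thereby exclude the three alternatives $x*y=y*^{-1}x$, $x*^{-1}y=y*x$, $x*^{-1}y=y*^{-1}x$, forcing $x*y=y*x$. The only cosmetic difference is that the paper argues by contradiction while you argue directly (and you explicitly handle the trivial case $x=y$), which does not change the substance.
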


\begin{proof}
Let $Q$ be a quasi-commutative quandle that is not commutative and $<$ be a bi-ordering on $Q$. Then there exist distinct elements $x$ and $y$ in $Q$ such that at least one of the following hold: $x*y=y*^{-1}x$, $x*^{-1}y=y*x$ or $x*^{-1}y=y*^{-1}x$. By Proposition \ref{prop10}, $x*y\,\,\blackdiamond\, y*^{-1}x$, $x*^{-1}y\,\blackdiamond\, y*x$ and $x*^{-1}y\,\blackdiamond\, y*^{-1}x$ for some $\blackdiamond\in\{<,>\}$. This is a contradiction.
\end{proof}

\medskip

\section{Constructions of orderable quandles and order-preserving automorphisms}\label{construction-order-quandles-automorphisms}
An \textit{action} of a quandle $Q$ on a quandle $X$ is a quandle homomorphism $$\phi : Q \to \Conj_{-1}\big(\Aut(X)\big),$$ where $\Aut(X)$ is the group of quandle automorphisms of $X$, and the operation in $\Conj_{-1}\big(\Aut(X)\big)$ is nothing but $x*y=yxy^{-1}$. Viewing any set $X$ as a trivial quandle, we have $\Aut(X)=\Sigma_X$, the symmetric group on $X$, and we obtain the definition of an action of a quandle $Q$ on a set $X$.

\begin{example}
Some basic examples of quandle actions are:
\begin{itemize}
\item If $Q$ is a quandle, then the map $\phi: Q \to \Conj_{-1} \big(\Aut(Q)\big)$ given by $q \mapsto S_q$ is a quandle homomorphism. Thus, every quandle acts on itself by inner automorphisms.
\item Let $G$ be a group acting on a set $X$. That is, there is a group homomorphism $\phi: G \to \Sigma_X$. Viewing both $G$ and $\Sigma_X$ as conjugation quandles and observing that a group homomorphism is also a quandle homomorphism between corresponding conjugation quandles, it follows that the quandle $\Conj_{-1}(G)$ acts on the set $X$.
\end{itemize}
\end{example}

\begin{theorem}\label{th5}
If a semi-latin quandle is right-orderable, then it acts faithfully on a linearly ordered set by order-preserving bijections. Conversely, if a quandle acts faithfully on a well-ordered set by order-preserving bijections, then it is right-orderable.
\end{theorem}

\begin{proof}
Let $Q$ be a semi-latin quandle that is right-ordered with respect to a linear order $<$. Taking $X=Q$ and defining $\phi: Q \to \Conj_{-1}(\Sigma_X)$ by $\phi(q)=S_q$, we see that $\phi$ is an action of $Q$ on the ordered set $X$. Further, if $q \in Q$ and $x, y \in X$ such that $x<y$, then right-orderability of $Q$ implies that
$$\phi(q)(x)=S_q(x)=x*q < y*q=S_q(y)=\phi(q)(y).$$
Further, if $p, q \in Q$ such that $\phi(p)=\phi(q)$, then $Q$ being semi-latin implies that $p=q$. Hence, $Q$ acts faithfully on $X$ by order-preserving bijections.
\par

Conversely, suppose that $\phi: Q \to \Conj_{-1}(\Sigma_X)$ is a faithful action of $Q$ on a well-ordered set $X$ by order-preserving bijections. Let $<$ be the well-order on $X$. We use the order $<$ to define an order on the quandle $Q$ as follows. For $p, q \in Q$ with $p \neq q$, consider the set $A_{p, q}=\{x \in X~|~\phi(p)(x) \neq \phi(q)(x) \}$. Faithfulness of the action implies that $\phi(p) \neq \phi(q)$, and hence $A_{p, q}$ is a non-empty subset of $X$. Since $<$ is a well-ordering on $X$, the set $A_{p, q}$ must admit the smallest element, say $x_0$, with respect to $<$. We define $p\prec q$ if $\phi(p)(x_0) < \phi(q)(x_0)$ and $q\prec p$ if $\phi(q)(x_0) < \phi(p)(x_0)$. 
\par

It is enough to check transitivity of $\prec$ on $Q$. Let $p \prec q$ and $ q \prec r$. Let $A_{p, q} = \{ x \in X ~|~ \phi(p)(x) \neq \phi(q)(x) \}$, $A_{q, r} = \{ x \in X ~|~ \phi(q)(x) \neq \phi(r)(x) \}$ and $A_{p, r} = \{ x \in X ~|~ \phi(p)(x) \neq \phi(r)(x) \}$. Since $p \prec q$ and $ q \prec r$, it follows that $A_{p, r}$ is non-empty. Let $x_0$, $y_0$ and $z_0$ be the smallest elements of the sets $A_{p, q}$, $A_{q, r}$ and $A_{p, r}$, respectively. Then we have the cases:
\begin{itemize}
\item If $x _0 < y_0$, then $\phi(p)(x_0) < \phi(q)(x_0)=\phi(r)(x_0)$, which implies that $z_0 \leq x_0$. If $z_0 < x_0$, then $\phi(p)(z_0)=\phi(q)(z_0) \neq \phi(r)(z_0)$, which contradicts the fact that $y_0$ is the smallest element of $A_{q, r} $. Hence, $x_0=z_0$ and $ p \prec r$.
\item If $x_0 = y_0$, then $\phi(p)(x_0) < \phi(q)(x_0)< \phi(r) (x_0)$, which gives $z_0 \leq x_0$. If $z_0 < x_0$ , then $\phi(p)(z_0)=\phi(q)(z_0) \neq \phi(r)(z_0)$, which is a contradiction to the fact that $y_0$ is the smallest element of $A_{q, r}$. Hence, $z_0=x_0$ and $p \prec r$.
\item If $x_0 > y_0$, then $\phi(p)(y_0)=\phi(q)(y_0) < \phi(r)(y_0)$, which further gives $z_0 \leq y_0$. If $z_0 < y_0$, then $\phi(p)(z_0)=\phi(q)(z_0) \neq \phi(r)(z_0)$, which is again a contradiction to the fact that $y_0$ is the smallest element of $A_{q, r}$. Hence, $z_0=y_0$ and $ p \prec r$.
\end{itemize}
Now, suppose that $p, q, r \in Q$ such that $p \prec q$. Let $A_{p, q}=\{x \in X~|~\phi(p)(x) \neq \phi(q)(x) \}$ and $A_{p*r, q*r}=\{x \in X~|~\phi(p*r)(x) \neq \phi(q*r)(x) \}$. Since both $A_{p, q}$ and $A_{p*r, q*r}$ are non-empty, they admit smallest elements, say $x_0$ and $y_0$, respectively. Notice that the bijection $\phi(r)$ maps $A_{p, q}$ onto $A_{p*r, q*r}$. Since $\phi(r)$ is order-preserving with respect to $<$, we have $\phi(r)(x_0)=y_0$. Since $p \prec q$, we have $\phi(p)(x_0) < \phi(q)(x_0)$, which implies that $\phi(p)\phi(r)^{-1}(y_0) < \phi(q)\phi(r)^{-1}(y_0)$. Since $\phi$ is a quandle homomorphism and $\phi(r)$ is order-preserving, this gives 
\begin{eqnarray*}
\phi(p*r)(y_0)&=& \phi(p)*\phi(r)(y_0)\\
&=& \phi(r)\phi(p)\phi(r)^{-1}(y_0) \\
&<& \phi(r)\phi(q)\phi(r)^{-1}(y_0)\\
&=& \phi(q)*\phi(r)(y_0)\\
&=& \phi(q*r)(y_0),
\end{eqnarray*}
and hence $p*r \prec q*r$. Thus, $Q$ is a right orderable quandle.
\end{proof}

Next, we give three constructions of orderable quandles. 

\begin{proposition}\label{prop6}
Let $(Q_1,*)$ and $(Q_2,\circ)$ be right-orderable quandles, and $\sigma: Q_1 \to {\rm Conj}_{-1} \left({\rm Aut}(Q_2) \right)$ and $\tau: Q_2 \to {\rm Conj}_{-1} \left({\rm Aut}(Q_1) \right)$ be order-preserving quandle actions. Suppose that
\begin{enumerate}
\item $\tau(z)(x)* y=\tau\left(\sigma(y)(z)\right)(x* y)$ for $x, y \in Q_1$ and $z \in Q_2$,
\item $\sigma(z)(x)\circ y=\sigma\left(\tau(y)(z)\right)(x\circ y)$ for $x, y \in Q_2$ and $z \in Q_1$.
\end{enumerate}
Then $Q=Q_1 \sqcup Q_2$ with the operation
$$
x\star y=\begin{cases}
x*y,& x, y \in Q_1, \\
x\circ y, &x, y \in Q_2, \\
{\tau(y)}(x), &x \in Q_1, y \in Q_2, \\
{\sigma(y)}(x), &x \in Q_2, y \in Q_1,
\end{cases} 
$$
is a right-orderable quandle.
\end{proposition}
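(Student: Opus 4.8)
The plan is to first check that $(Q,\star)$ is a quandle and then to produce an explicit right-ordering on it. Idempotency \textbf{Q1} is immediate, since on $Q_1$ the operation $\star$ is $*$ and on $Q_2$ it is $\circ$, and $x\star x$ only involves one of these. Axiom \textbf{Q2} holds because right multiplication by any $y\in Q$ is a bijection of $Q$: it restricts to a bijection of $Q_1$ onto $Q_1$ (right multiplication by $y$ in $(Q_1,*)$ when $y\in Q_1$, or the automorphism $\tau(y)$ when $y\in Q_2$) and to a bijection of $Q_2$ onto $Q_2$ (the automorphism $\sigma(y)$ when $y\in Q_1$, or right multiplication by $y$ in $(Q_2,\circ)$ when $y\in Q_2$). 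The substance of the verification is the distributivity axiom \textbf{Q3}, namely $(x\star y)\star z=(x\star z)\star(y\star z)$, which I would prove by splitting into the eight cases indexed by which of $Q_1,Q_2$ contains each of $x,y,z$.

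When $x,y,z$ all lie in $Q_1$ (resp. all in $Q_2$), this is just \textbf{Q3} for $(Q_1,*)$ (resp. $(Q_2,\circ)$). When $x,y$ lie on one side and $z$ on the other, both sides of the identity are obtained by applying the single automorphism $\tau(z)$ or $\sigma(z)$ to a $*$- or $\circ$-product, so the identity follows because $\tau(z)$ and $\sigma(z)$ are quandle automorphisms. When $z$ lies with $y$ (but not with $x$), expanding both sides via the definition of $\star$ reduces the identity to the statement that $\sigma$ (respectively $\tau$) is a homomorphism into a conjugation quandle $\Conj_{-1}$, i.e. to $\sigma(y*z)=\sigma(z)\,\sigma(y)\,\sigma(z)^{-1}$ in $\Aut(Q_2)$ and its analogue for $\tau$; after cancelling the conjugating factor the two sides agree. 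Finally, when $z$ lies with $x$ (but not with $y$), unwinding $\star$ turns the required identity into exactly hypothesis (1) in one case and hypothesis (2) in the other. Thus \textbf{Q3} holds in all eight cases and $(Q,\star)$ is a quandle. I expect this eight-case bookkeeping to be the only genuinely laborious step: each individual case is a one-line computation, but one must keep careful track of the direction of each automorphism and of the convention $a*b=bab^{-1}$ in $\Conj_{-1}$; there is no conceptual obstacle.

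For right-orderability, fix right-orderings $<_1$ on $Q_1$ and $<_2$ on $Q_2$, and order $Q=Q_1\sqcup Q_2$ by declaring every element of $Q_1$ to be smaller than every element of $Q_2$, while using $<_1$ inside $Q_1$ and $<_2$ inside $Q_2$; this is visibly a linear order on $Q$. To check it is a right-ordering, take $x<y$ in $Q$ and $z\in Q$. If $x$ and $y$ lie on the same side, then so do $x\star z$ and $y\star z$, and they are correctly ordered because $<_1$ (or $<_2$) is a right-ordering and, when $z$ lies on the opposite side, because $\tau(z)$ (or $\sigma(z)$) is order-preserving --- this is precisely where the hypothesis that $\sigma$ and $\tau$ are order-preserving actions is used. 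If $x\in Q_1$ and $y\in Q_2$, then reading off the definition of $\star$ shows $x\star z\in Q_1$ and $y\star z\in Q_2$ for every $z$, so $x\star z<y\star z$ by the construction of the order. Since these exhaust all possibilities with $x<y$, the order is a right-ordering and $Q$ is a right-orderable quandle.
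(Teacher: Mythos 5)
Your proof is correct and follows essentially the same route as the paper: the same linear order on $Q_1\sqcup Q_2$ (everything in $Q_1$ below everything in $Q_2$, with the given right-orders on each piece) and the same case analysis for compatibility with $\star$. The only difference is that you verify the quandle axioms for $(Q,\star)$ by hand --- and your eight-case reduction of \textbf{Q3} to the automorphism property of $\tau(z),\sigma(z)$, the homomorphism property of $\sigma,\tau$ into $\Conj_{-1}$, and hypotheses (1) and (2) is accurate --- whereas the paper simply cites Bardakov--Nasybullov--Singh for that fact.
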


\begin{proof}
That $Q$ is a quandle follows from \cite[Proposition 11]{BardakovNasybullovSingh}. Let $<_1$ and $<_2$ be the right-orders on $Q_1$ and $Q_2$, respectively. Define an order $<$ on $Q$ by setting $x< y$ iff $x, y \in Q_1$ and $x<_1 y$ or $x, y \in Q_2$ and $x<_2 y$ or $x\in Q_1$ and $y \in Q_2$. A direct check shows that $<$ is indeed a linear order on $Q$. We claim that $<$ turns $Q$ into a right orderable quandle. Let $x, y, z \in Q$ such that $x<y$. We have the following cases:
\begin{itemize}
\item $x, y, z \in Q_1$ or $x, y,z \in Q_2$: In this case, since $Q_1$ and $Q_2$ are right-orderable, we get $x\star z < y\star z$.
\item $x, y \in Q_1$ and $z \in Q_2$: In this case, since $\tau(z)$ is order preserving, we have $x \star z=\tau(z)(x) <_1 \tau(z)(y)=y \star z$, and hence $x \star z < y \star z$.
\item $x, y \in Q_2$ and $z \in Q_1$: In this case, $\sigma(z)$ being order preserving implies that $x \star z=\sigma(z)(x) <_2 \sigma(z)(y)=y \star z$, and hence $x \star z < y \star z$.
\item $x, z \in Q_1$ and $y \in Q_2$: In this case, $x \star z= x*z \in Q_1$ and $y \star z= \sigma(z)(y) \in Q_2$, and hence $x \star z < y \star z$.
\item $x \in Q_1$ and $y, z \in Q_2$: In this case, $x \star z= \tau(z)(x) \in Q_1$ and $y \star z= y \circ z \in Q_2$, and hence $x \star z < y \star z$.
\end{itemize}
Thus, $Q$ is a right-orderable quandle.
\end{proof}

If $\sigma: Q_1 \to \id_{Q_2}$ and $\tau: Q_2 \to \id_{Q_1}$ are the trivial actions, then conditions (1) and (2) of Proposition \ref{prop6} always hold. Thus, the disjoint union of two right-orderable quandles is right-orderable. It is clear that the disjoint union of two left-orderable quandles is not left-orderable.
\par

Let $\{Q_i, *_i\}_{i \in \Lambda}$ be a family of quandles and $Q= \prod_{i \in \Lambda} Q_i$ their cartesian product. Then $Q$ is a quandle with $(x_i)\star (y_i)= (x_i *_i y_i)$ and called the {\it product quandle}. The following observation is rather immediate, but we include a proof for the sake of completeness.

\begin{proposition}\label{prop1}
The product of right-orderable quandles is a right-orderable quandle. Similarly, the product of bi-orderable quandles is bi-orderable.
\end{proposition}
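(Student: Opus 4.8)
The plan is to equip the product with a \emph{lexicographic} order. First I would invoke the well-ordering theorem to fix a well-order on the index set $\Lambda$; this is exactly what makes the lexicographic construction available, since for any two distinct tuples in $Q$ the set of coordinates on which they disagree is non-empty and hence has a least element. Concretely, for each $i\in\Lambda$ let $<_i$ be a right-order on $Q_i$; given distinct $(x_i),(y_i)\in Q$, let $j=j\big((x_i),(y_i)\big)$ be the least index with $x_j\ne y_j$, and declare $(x_i)<(y_i)$ precisely when $x_j<_j y_j$.

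Next I would check that $<$ is a linear order on $Q$. Irreflexivity and trichotomy are immediate from the definition (using trichotomy of each $<_i$). For transitivity, given $(x_i)<(y_i)<(z_i)$ with associated least-disagreement indices $j$ and $k$, put $\ell=\min\{j,k\}$ and argue by cases ($j<k$, $j=k$, $j>k$): in each case one verifies that $\ell$ is the least index on which $(x_i)$ and $(z_i)$ differ and that $x_\ell<_\ell z_\ell$, so $(x_i)<(z_i)$.

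Now suppose $(x_i)<(y_i)$ with least-disagreement index $j$, so $x_j<_j y_j$, and let $(z_i)\in Q$ be arbitrary. For $i<j$ we have $x_i=y_i$, hence $x_i*_i z_i=y_i*_i z_i$; and at $i=j$, right-orderability of $Q_j$ gives $x_j*_j z_j<_j y_j*_j z_j$, in particular $x_j*_j z_j\ne y_j*_j z_j$. Thus $j$ is exactly the least index on which $(x_i)\star(z_i)=(x_i*_i z_i)$ and $(y_i)\star(z_i)=(y_i*_i z_i)$ disagree, and comparing the $j$-th coordinates gives $(x_i)\star(z_i)<(y_i)\star(z_i)$. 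Hence $<$ is a right-order on $Q$. The left-orderable case is proved verbatim with $z_i*_i x_i$ in place of $x_i*_i z_i$, using left-orderability of $Q_j$ at the critical coordinate. For the bi-orderable case, fix for each $i$ a single order $<_i$ that is simultaneously a left- and a right-order on $Q_i$ (equivalently, by Proposition~\ref{prop3}, of type $(<,<,<,>)$); the resulting lexicographic order on $Q$ is then both a left- and a right-order by the two arguments just given, hence a bi-order.

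The main point requiring care is the observation that passing from $(x_i),(y_i)$ to the translated tuples $(x_i)\star(z_i),(y_i)\star(z_i)$ (or the left translates) does not move the least index of disagreement; this rests on coordinatewise equality strictly below $j$ together with strictness of $<_j$ at $j$. Beyond this bit of bookkeeping there is no real obstacle; the only genuine ingredient is the appeal to the well-ordering theorem, needed so that ``least index of disagreement'' is meaningful for an arbitrary index set.
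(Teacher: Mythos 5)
Your proof is correct and follows essentially the same route as the paper: a lexicographic order on the product via a well-ordering of the index set, with the key observation that the least index of disagreement is unchanged under right (or left) translation. The paper's argument is identical in substance, so no further comparison is needed.
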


\begin{proof}
Let $\{Q_i, *_i\}_{i \in \Lambda}$ be a family of right-orderable quandles. Let $<_i$ be the right-order on $Q_i$ for $i \in \Lambda$ and $Q$ their product quandle. By axiom of choice, we can take a well-ordering $<$ on the indexing set $\Lambda$. Let $(x_i), (y_i) \in Q$ such that $(x_i) \neq (y_i)$. Then there exists the least index $\ell \in \Lambda$ such that $x_\ell \neq y_\ell$. We define $(x_i) \prec (y_i)$ if $x_\ell <_\ell y_\ell$ and $(y_i) \prec (x_i)$ if $y_\ell <_\ell x_\ell$. It is easy to check that $\prec$ is a linear order on $Q$.
\par

Let $(x_i), (y_i), (z_i) \in Q$ such that $(x_i) \prec (y_i)$. Then we have $x_\ell <_\ell y_\ell$, where $\ell$ is the least index such that $x_\ell \neq y_\ell$. The second quandle axiom in $Q$ implies that $(x_i *_i z_i)= (x_i)\star(z_i) \neq (y_i) \star (z_i)=(y_i *_i z_i)$. It turns out that $\ell$ is also the least index for which $x_\ell *_\ell z_\ell \neq y_\ell *_\ell z_\ell$. Since $x_\ell <_\ell y_\ell$ and $Q_\ell$ is right orderable, it follows that $x_\ell *_\ell z_\ell <_\ell y_\ell *_\ell z_\ell$. By definition of $\prec$, we have $ (x_i)\star(z_i) \prec (y_i) \star (z_i)$. Thus, $Q$ is a right-orderable quandle. The second assertion follows analogously.
\end{proof}

Let $Q$ be a quandle and $A$ a set. Following \cite[Section 2.1]{Andruskiewitsch2003}, a {\it dynamical 2-cocycle} is a map $\alpha: Q \times Q \to \Map(A \times A, A)$ such that 
\begin{equation}\label{dynamical-cocycle-condition1}
\alpha_{x, x}(s, s)=s,
\end{equation}
\begin{equation}\label{dynamical-cocycle-condition2}
\alpha_{x, y}(-, t): A \to A~\textrm{is a bijection}
\end{equation}
and the {\it cocycle condition}
\begin{equation}\label{dynamical-cocycle-condition3}
\alpha_{x*y, z}\big(\alpha_{x, y}(s, t), ~u \big)= \alpha_{x* z, y*z}\big(\alpha_{x, z}(s, u),~ \alpha_{y, z}(t, u) \big)
\end{equation}
holds for all $x, y, z \in Q$ and $s, t, u \in A$. Given a dynamical 2-cocycle $\alpha$, the set $Q \times A$ can then be turned into a quandle denoted as $Q \times_{\alpha} A$ by defining
\begin{equation}\label{dynamical-quandle-operation}
(x, s)* (y,t)= \big( x* y, ~\alpha_{x, y}(s, t) \big).
\end{equation} 
The equations \eqref{dynamical-cocycle-condition1}, \eqref{dynamical-cocycle-condition2} and \eqref{dynamical-cocycle-condition3} give the quandle axioms {\textbf Q1}, {\textbf Q2} and {\textbf Q3}, respectively. The quandle $Q \times_{\alpha} A$ is called the {\it extension} of $Q$ by $A$ through $\alpha$.
\par
If $A$ is an abelian group, then a normalized quandle 2-cocycle is a map $\alpha: Q \times Q \to A$ satisfying
$$
\alpha_{x, y}~\alpha_{x*y, z}= \alpha_{x, z}~\alpha_{x* z, y*z}
$$
and
$$
\alpha_{x, x}=1
$$
for all $x, y, z \in Q$. A normalized quandle 2-cocycle $\alpha: Q \times Q \to A$ gives rise to a dynamical 2-cocycle $\alpha': Q \times Q \to \Map(A \times A, A)$ defined as $$\alpha'_{x, y}(s, t)=s~\alpha_{x, y}.$$ In this case, the quandle $X \times_\alpha A$ is called the {\it abelian extension} of $Q$ by $A$ through $\alpha$. Such extensions appeared first in \cite{CKS03}.
\par

\begin{proposition}\label{orderable-extensions}
The following statements hold:
\begin{enumerate}
\item Let $Q$ be a right-orderable quandle, $A$ an ordered set and $\alpha: Q \times Q \to \Map(A \times A, A)$ a dynamical 2-cocycle. If $\alpha_{x, y}:A \times A \to A$ is order-preserving for all $x, y \in Q$, then the quandle $Q \times_\alpha A$ is right-orderable.
\item If $Q$ is a right-orderable quandle, $A$ a right-orderable abelian group and $\alpha: Q \times Q \to A$ a normalized 2-cocycle, then the quandle $X \times_\alpha A$ is right-orderable.
\item If $Q$ is a quandle, $A$ a non-trivial abelian group and $\alpha: Q \times Q \to A$ a normalized 2-cocycle, then the quandle $X \times_\alpha A$ cannot be left-orderable.
\end{enumerate}
\end{proposition}

\begin{proof}
Let $<$ be a right-order on $Q$ and $<'$ an order on $A$. Consider the set $Q \times A$ with the induced lexicographic order $\prec$ and $A \times A$ equipped with the lexicographic order $\prec'$. Let $(x, s), (y, t), (z, u) \in Q \times A$. By \eqref{dynamical-quandle-operation}, we have $(x, s)*(z, u) =(x*z, \alpha_{x, z}(s, u))$ and $(y, t)*(z, u) =(y*z, \alpha_{y, z}(t, u))$. If $(x, s) \prec (y, t)$, then we have two cases:
\begin{itemize}
\item If $x<y$, then right-orderability of $Q$ implies that $x*z < y*z$, and hence $(x, s)*(z, u) \prec (y, t)*(z, u)$.
\item If $x=y$ and $s <' t$, then $(s, u) \prec' (t, u)$ and $\alpha_{x, z}$ being order-preserving implies that
$\alpha_{x, z}(s, u) <' \alpha_{x, z}(t, u) =\alpha_{y, z}(t, u)$.
\end{itemize}
Hence, $(x, s)*(z, u) \prec (y, t)*(z, u)$, and the quandle $X \times_\alpha A$ is right-orderable proving (1).
\par

Define $\alpha'_{x, y}(s, t)=s~\alpha_{x, y}$ for all $x, y \in Q$ and $s, t \in A$. Then $\alpha'$ is a dynamical 2-cocycle. The right-orderability of the abelian group $A$ implies that $$\alpha'_{x, z}(s, u)=s~\alpha_{x, z} <' t~\alpha_{x, z}= \alpha'_{x, z}(t, u)$$
for all $x, z \in Q$ and $s, t, u \in A$ with $s<' t$. The proof of assertion (2) now follows along the lines of that of assertion (1).
\par
For assertion (3), notice that any left-orderable quandle must be semi-latin. But, for any $(x, s), (x, t), (z, u) \in Q \times A$ with $s \neq t$, we have 
$$(z, u)*(x, s)=(z*x, u~\alpha_{z, x})=(z, u)* (x, t).$$ Hence, the abelian extension $X \times_\alpha A$ cannot be left-orderable.
\end{proof}

We conclude this section with some observations on order-preserving automorphisms of orderable quandles. Let $\Aut^\circ(Q)$ denote the group of order-preserving automorphisms of a quandle $Q$ equipped with an order. Similarly, let $\Aut^\circ(G)$ denote the group of order-preserving automorphisms of a group $G$ equipped with an order.

\begin{proposition}\label{inn-order-auto}
If $Q$ is a right-orderable quandle, then $\Inn(Q)$ is a subgroup of $\Aut^\circ(Q)$.
\end{proposition}

\begin{proof}
Let $<$ be a right-order on $Q$, $x, y \in Q$ with $x<y$ and $S_{z_1}^{d_1}S_{z_2}^{d_2} \cdots S_{z_k}^{d_k} \in \Inn(Q)$, where $d_i \in \{1, -1\}$ and $z_i \in Q$. Then, right-orderability of $Q$ and Lemma \ref{lem2} implies that
$$S_{z_1}^{d_1}S_{z_2}^{d_2} \cdots S_{z_k}^{d_k}(x)= x*^{d_k} z_k*^{d_{k-1}} \cdots *^{d_1} z_1 < y*^{d_k} z_k*^{d_{k-1}} \cdots *^{d_1} z_1=S_{z_1}^{d_1}S_{z_2}^{d_2} \cdots S_{z_k}^{d_k}(y),$$
and hence $\Inn(Q) \le \Aut^\circ(Q)$.
\end{proof}

Note that Proposition \ref{inn-order-auto} fails if $Q$ is a left-orderable quandle. In Example \ref{ex1}, if we take $u<0$, then the quandle $\Alex(\mathbb{R},\phi_u)$ is left-orderable. However, if $x,y, z \in \Alex(\mathbb{R},\phi_u)$ with $x<y$, then $S_z(y) < S_z(x)$.

\begin{proposition}\label{order-aut-conj}
The following hold for any bi-orderable group $G$:
\begin{enumerate}
\item There is an embedding of groups $\Z(G) \rtimes \Aut^\circ(G) \hookrightarrow \Aut^\circ\big(\Conj(G)\big)$.
\item If $G$ has trivial center, then $\Aut^\circ (\Conj(G)) =\Aut^\circ(G)$.
\end{enumerate}
\end{proposition}

\begin{proof}
Let $<$ be a bi-ordering on the group $G$. Then, by Proposition \ref{prop2} (1), $\Conj(G)$ is a right-orderable quandle with respect to $<$. By \cite[Proposition 4.7]{bardakov-dey-singh}, $\Z(G) \rtimes \Aut(G) \hookrightarrow \Aut\big(\Conj(G)\big)$, where each central element $z \in \Z(G)$ act on $\Conj(G)$ by left translation $t_z$ by $z$. Left orderability of $G$ implies that $t_z \in \Aut^\circ (\Conj(G))$ for all $z \in \Z(G)$. Since $\Aut^\circ(G) \le \Aut^\circ (\Conj(G))$, we obtain $\Z(G) \rtimes \Aut^\circ(G) \hookrightarrow \Aut^\circ\big(\Conj(G)\big)$.
\par
It follows from \cite[Corollary 1]{BardakovNasybullovSingh} that if $G$ has trivial center, then $\Aut (\Conj(G))=\Aut(G)$, and hence $\Aut^\circ (\Conj(G)) =\Aut^\circ(G)$.
\end{proof}

\begin{proposition}\label{order-aut-alex}
Let $G$ be a bi-orderable group and $\varphi \in \Aut(G)$ an order-reversing automorphism. Then the following hold:
\begin{enumerate}
\item There is an embedding $\Z(G) \rtimes \C_{\Aut^\circ(G)}(\varphi) \hookrightarrow \Aut^\circ\big(\Alex(G, \varphi)\big)$, where $\C_{\Aut^\circ(G)}(\varphi)=\{f \in \Aut^\circ(G)~|~ f \phi=\phi f \}$.
\item If $G$ is a torsion free abelian group, then $\Aut^\circ\big(\Core(G)\big) \cong G \rtimes \Aut^\circ(G)$. 
\end{enumerate}
\end{proposition}

\begin{proof}
Let $<$ be a bi-ordering on the group $G$ and $\phi\in \Aut(G)$ an order-reversing automorphism. Then, by Proposition \ref{prop2} (3), $\Alex(G,\phi)$ is a left-orderable quandle with respect to the order $<$. Further, by \cite[Proposition 4.1]{bardakov-dey-singh}, $\Z(G) \rtimes \C_{\Aut(G)}(\varphi) \hookrightarrow \Aut\big(\Alex(G, \varphi)\big)$, where an element $z \in \Z(G)$ act on the quandle $\Alex(G,\phi)$ by left translation $t_z$. Since $G$ is left-orderable, the translation $t_z \in \Aut^\circ\big(\Alex(G, \varphi)\big)$ for each $z \in \Z(G)$. Further, if $f \in \C_{\Aut^\circ(G)}(\varphi)$, then $f \in \Aut^\circ\big(\Alex(G, \varphi)\big)$, and hence $\Z(G) \rtimes \C_{\Aut^\circ(G)}(\varphi) \hookrightarrow \Aut^\circ\big(\Alex(G, \varphi)\big)$.
\par
For the second assertion, note that every torsion free abelian group $G$ is bi-orderable. Taking $\varphi(g)=g^{-1}$ for all $g \in G$, we have $\Alex(G, \varphi)=\Core(G)$ and $\C_{\Aut^\circ(G)}(\varphi)=\Aut^\circ(G)$. By \cite[Theorem 4.2]{bardakov-dey-singh}, $\Aut(
\Core(G)) \cong G \rtimes \Aut(G)$, and hence $\Aut^\circ(\Core(G)) \cong G \rtimes \Aut^\circ(G)$.
\end{proof}

\medskip

\section{Orderability of some general quandles}\label{orderability-general-quandle}
In this section, we discuss orderability of some general quandles. The construction of free racks due to Fenn and Rourke \cite[p.351]{frk} and free quandles due to Kamada \cite{k1,k2} has been extended in a recent work of Bardakov and Nasybullov \cite{bn} to what they refer as $(G,A)$-racks/quandles. In fact, many well-known quandles can be seen as $(G, A)$-quandles.
\par

Let $G$ be a group and $A$ be a subset of $G$. Then the set $A \times G$ becomes rack under the following operation
\begin{equation*}
(a,u)*(b,v)= (a, uv^{-1}bv)\;\;\text{for}\;a,b\in A\;\textrm{and}\;u,v\in G.
\end{equation*}
The rack defined as above is known as $(G,A)$-{\it rack} and is denoted by $R(G,A)$. Let $Q(G,A)$ be the quotient of the set $A \times G$ by the equivalence relation $(a, vu) \sim (a,u)$ if and only if $v \in C_G(a)=\{x \in G~|~xa=ax\}$. Denote $[(a,u)]$ to be the equivalence class of $(a,u)$ in $Q(G,A)$. The set $Q(G,A)$ becomes quandle under the following operation
\begin{equation*}
[(a,u)]*[(b,v)]= [(a,uv^{-1}bv)]\;\;\text{for}\;a,b\in A\;\textrm{and}\;u,v\in G,
\end{equation*}
and this quandle is known as $(G,A)$-{\it quandle}. For simplicity, we will write $(a,u)$ instead of $[(a,u)]$ throughout this section.

There is a natural rack homomorphism $\epsilon: R(G,A) \to \Conj(G)$ defined as $\epsilon(a,u)=u^{-1}a u$. Moreover, this map induces a quandle homomorphism $\overline{\epsilon}: Q(G, A) \to \Conj(G)$ defined as $\overline{\epsilon}(a,u)=u^{-1} a u$.

Recall from Fenn and Rourke \cite[p.351]{frk} that the free rack $FR(A)$ on a set $A$ is the rack $R(F(A),A)$, where $F(A)$ is the free group on the set $A$. On the other hand, Kamada \cite{k1,k2} defined the free quandle $FQ(A)$ on a set $A$ as a quotient of $FR(A)$ modulo the equivalence relation generated by $$(a, w)= (a, aw)$$ for $a \in A$ and $w \in F(A)$. Furthermore, if $A$ is the set of representatives of conjugacy classes of a group $G$, then $Q(G, A) \cong \Conj(G)$.

\begin{theorem}\label{th3}
Let $G$ be a group and $A$ be a subset of $G$.
\begin{enumerate}
\item If $G$ is right-orderable, then the rack $R(G,A)$ is right-orderable.
\item If $G$ is bi-orderable, then the quandle $Q(G,A)$ is right-orderable.
\end{enumerate}
\end{theorem}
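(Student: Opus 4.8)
The plan is to transport orders from the group side to the rack/quandle side using the explicit coordinate description of $R(G,A)$ and $Q(G,A)$, ordering elements \emph{first} by their $A$-coordinate and \emph{then}, within a fixed first coordinate, by a cleverly chosen order on the second coordinate. For part (1), fix a linear order $\prec$ on the set $A$ (which exists by the axiom of choice) and let $<$ be the given right-order on $G$. Define an order on $A\times G$ by declaring $(a,u)<(b,v)$ if $a\prec b$, or if $a=b$ and $u<v$. This is clearly a linear order. To check right-orderability, take $(a,u)<(b,v)$ and an arbitrary $(c,w)$; then $(a,u)*(c,w)=(a,uw^{-1}cw)$ and $(b,v)*(c,w)=(b,vw^{-1}cw)$. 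If $a\prec b$ the first coordinates still compare the same way, so we are done; if $a=b$ and $u<v$, then multiplying on the right by $w^{-1}cw$ preserves $<$ because $G$ is right-orderable, giving $uw^{-1}cw<vw^{-1}cw$, hence $(a,u)*(c,w)<(b,v)*(c,w)$. This handles (1).

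For part (2), the subtlety is that $Q(G,A)$ is the quotient of $A\times G$ by $(a,vu)\sim(a,u)$ for $v\in C_G(a)$, so the naive order on the second coordinate is not well defined on cosets $C_G(a)\backslash G$. Here I would use that $G$ is now \emph{bi-orderable}: if $<$ is a bi-order on $G$, then for any subgroup $H$ the relation on left cosets $Hu < Hv \iff h u < v$ for all (equivalently, some) representative comparison is well behaved — more precisely, bi-invariance lets one quotient by a \emph{convex} subgroup, and in general one shows that the coset order $Hu \le Hv$ defined by ``$hu \le hv$ holds for the representatives chosen with minimal ...'' — cleaner is this: since $<$ is bi-invariant, the relation $u \sim_H v$ with $Hu = Hv$ is compatible in the sense that $u < v \Rightarrow$ either $Hu = Hv$ or $h u < h' v$ for all $h,h'$, because $u<v$ gives $hu < hv$ (left-invariance) and then comparing $Hu$ with $Hv$ is unambiguous. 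Concretely, define $C_G(a)u < C_G(a)v$ iff $u_0 < v_0$ where $u_0,v_0$ are ... — rather than belabor representatives, the honest statement is: \emph{a bi-orderable group admits, for every subgroup $H$, an $H$-left-invariant linear order on $H\backslash G$ that is also right-$G$-invariant}, obtained by pushing forward the bi-order; I would prove this small lemma first. Then define the order on $Q(G,A)$ exactly as in (1): $(a,u)<(b,v)$ if $a\prec b$, or $a=b$ and $C_G(a)u < C_G(a)v$ in the induced coset order.

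With that coset-order lemma in hand, the verification of right-orderability for $Q(G,A)$ is the same computation as in (1): given $(a,u)<(b,v)$ and arbitrary $(c,w)$, in the nontrivial case $a=b$ we have $C_G(a)u < C_G(a)v$, and since the coset order is right-$G$-invariant, right-multiplying the coset representatives by $w^{-1}cw$ preserves the inequality, so $(a,uw^{-1}cw)<(a,vw^{-1}cw)$, i.e. $(a,u)*(c,w)<(b,v)*(c,w)$. The main obstacle is precisely the coset-order lemma — making sure the bi-order on $G$ descends to a genuine linear (not merely partial) order on $C_G(a)\backslash G$ that remains right-invariant; bi-invariance is exactly what is needed and what fails for a merely right-orderable group, which is why part (2) requires the stronger hypothesis. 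Everything else is the two-level lexicographic bookkeeping, which is routine once the lemma is set up. One should also note at the end that since free quandles $FQ(A) = Q(F(A),A)$ and $F(A)$ is bi-orderable, Corollary \ref{cor2} follows immediately.
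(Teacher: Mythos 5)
Your part (1) is correct and is essentially the paper's argument: a lexicographic order with the first coordinate ordered arbitrarily (the paper simply restricts the right order on $G$ to $A$) and the second coordinate compared via the right order on $G$, right-invariance of $<$ doing all the work. Part (2), however, has a genuine gap, and you have located it yourself: everything hinges on the ``coset-order lemma,'' which you state but do not prove, and which is in fact \emph{false} as stated. A bi-orderable group does not admit, for every subgroup $H$, a right-$G$-invariant linear order on $H\backslash G$: take $G=\mathbb{Z}$ and $H=2\mathbb{Z}$; then $H\backslash G$ has two elements which are swapped by the right action of a generator of $G$, so no $G$-invariant linear order can exist. The intermediate claim you float along the way (``$u<v$ implies either $Hu=Hv$ or $hu<h'v$ for all $h,h'\in H$'') also fails, e.g.\ in $\mathbb{Z}^2$ with the lexicographic order and $H$ the dominant factor. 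What is true is that the particular subgroups $H=C_G(a)$ arising here do admit such an order, but this needs an actual construction rather than a push-forward of the bi-order along a choice of coset representatives.

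The construction that closes the gap is the one the paper uses, and it is short: the map $\overline{\epsilon}\colon Q(G,A)\to G$ given by $(a,u)\mapsto u^{-1}au$ is injective on the fibre over a fixed $a$ (if $u^{-1}au=v^{-1}av$ then $vu^{-1}\in C_G(a)$, so $(a,u)=(a,v)$ in $Q(G,A)$), so within a fixed first coordinate one may declare $(a,u)<'(a,v)$ iff $u^{-1}au<v^{-1}av$ in $G$. Applying the operation $*\,(c,w)$ turns this comparison into $(w^{-1}cw)^{-1}(u^{-1}au)(w^{-1}cw)$ versus $(w^{-1}cw)^{-1}(v^{-1}av)(w^{-1}cw)$, i.e.\ a conjugation, and conjugation preserves a bi-order --- this is exactly where two-sided invariance enters, confirming your instinct that bi-orderability is the right hypothesis. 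Equivalently: $C_G(a)\backslash G$ is $G$-equivariantly identified with the conjugacy class of $a$ acted on by conjugation, and the bi-order restricted to that conjugacy class is conjugation-invariant. With this replacement for your lemma, the rest of your lexicographic bookkeeping, and the deduction of Corollary \ref{cor2}, goes through.
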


\begin{proof}
\begin{enumerate}[(1)]
\item Let $<$ be a right ordering on $G$. We define a linear order $<'$ on $R(G,A)$ as follows. Let $(a,u)$ and $(b,v)$ be two distinct elements of $R(G,A)$.
\begin{itemize}
\item If $a \neq b$, define $(a,u)<'(b,v)$ if $a<b$ and $(b,v)<'(a,u)$ if $b<a$.
\item If $a=b$, define $(a,u)<'(a,v)$ if $u<v$ and $(a,v)<'(a,u)$ if $v<u$.
\end{itemize}
Let $(a,u), (b,v), (c,w) \in R(G,A)$ such that $(a,u)<'(b,v)$. If $a \neq b$, then $a<b$, and hence $(a,u)*(c,w)<'(b,v)*(c,w)$. If $a=b$, then $u<v$. Since $G$ is right-ordered with respect to $<$, it follows that $uw^{-1}cw<vw^{-1}cw$, and hence $(a,u)*(c,w)<'(a,v)*(c,w)$. This shows that $R(G,A)$ is a right-orderable rack.
\medskip
\item Let $<$ be a bi-ordering on $G$. Define a linear order $<'$ on $Q(G,A)$ as follows. Let $(a,u)$ and $(b,v)$ be two distinct elements of $Q(G,A)$. 
\begin{itemize}
\item If $a \neq b$, then define $(a,u)<'(b,v)$ if $a<b$ and $(b,v)<'(a,u)$ if $b<a$.
\item If $a=b$, then we define the order using the image of $(a,u)$ and $(a,v)$ under the map $\overline{\epsilon}: Q(G,A) \to G$. Notice that, if $(a,u) \neq (a,v)$ in $Q(G,A)$, then $\overline{\epsilon}(a,u)\neq \overline{\epsilon}(a,v)$. For, if $u^{-1}au=v^{-1}av$, then $vu^{-1}a=a vu^{-1}$; this implies that $vu^{-1} \in C_G(a)$ and hence $(a,u) =(a,v)$. Now, define $(a,u)<'(a,v)$ if $u^{-1} a u<v^{-1} a v$ and $(a,v)<'(a,u)$ if $v^{-1}av<u^{-1}au$.
\end{itemize}
We claim that $Q(G,A)$ is right-ordered with respect to $<'$. Let $(a,u), (b,v), (c,w) \in R(G,A)$ such that $(a,u)<'(b,v)$. If $a \neq b$, then $a<b$, and hence $(a,u)*(c,w)<'(b,v)*(c,w)$. If $a=b$, then $u^{-1}au<v^{-1} a v$. Since $G$ is bi-ordered with respect to $<$, we have $w^{-1}c^{-1}wu^{-1}auw^{-1}cw<w^{-1}c^{-1}wv^{-1}avw^{-1}cw$, and hence $(a,u)*(c,w)<'(a,v)*(c,w)$. This shows that $Q(G,A)$ is right-orderable.\qedhere
\end{enumerate}
\end{proof}

If $A$ is the set of representatives of conjugacy classes of a group $G$, then $Q(G, A)\cong \Conj(G)$. Thus, we recover Proposition \ref{prop2} (1). Further, since free groups are bi-orderable \cite{v}, we retrieve the following result of \cite[Theorem 3.5]{bps}.

\begin{corollary}\label{cor2}
Free quandles are right-orderable. In particular, link quandles of trivial links are right orderable.
\end{corollary}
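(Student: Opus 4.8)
The plan is to identify the free quandle $FQ(A)$ with a $(G,A)$-quandle for a free group $G$ and then quote Theorem \ref{th3}(2) directly; the trivial-link statement then follows because link quandles of trivial links are free.

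First I would verify that $FQ(A)\cong Q\big(F(A),A\big)$, where $F(A)$ is the free group on the set $A$. By the definitions recalled above, $FQ(A)$ is the quotient of $FR(A)=R\big(F(A),A\big)$ by the equivalence relation generated by $(a,w)\sim(a,aw)$ for $a\in A$ and $w\in F(A)$, whereas $Q\big(F(A),A\big)$ is the quotient of the underlying set $A\times F(A)$ by the relation $(a,vu)\sim(a,u)$ whenever $v\in C_{F(A)}(a)$. The key point is the classical fact that a generator $a$ of a free group is not a proper power and that centralizers in free groups are cyclic, so $C_{F(A)}(a)=\langle a\rangle$. Hence the relation $(a,vu)\sim(a,u)$ for $v\in C_{F(A)}(a)$ is generated precisely by the instances $v=a$, i.e.\ by $(a,au)\sim(a,u)$, which is exactly Kamada's defining relation. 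The two equivalence relations coincide, and since the quandle operation on both sides is $(a,u)*(b,v)=(a,uv^{-1}bv)$, the isomorphism follows.

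Next, since free groups are bi-orderable \cite{v}, Theorem \ref{th3}(2) applies with $G=F(A)$ and gives that $Q\big(F(A),A\big)$, hence $FQ(A)$, is right-orderable. This proves the first assertion. For the \emph{in particular} clause, I would recall that the link quandle of the $n$-component trivial link, computed from the diagram consisting of $n$ disjoint crossingless arcs, is generated by $n$ elements subject to no relations, and therefore is the free quandle on $n$ generators; right-orderability is then immediate from the first part.

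I expect the only mildly non-routine step to be the identification $FQ(A)\cong Q\big(F(A),A\big)$, and within it the centralizer computation $C_{F(A)}(a)=\langle a\rangle$; everything else is a direct appeal to Theorem \ref{th3}(2) and \cite{v}. If one wished to sidestep the free-group fact, one could instead right-order $FR(A)=R\big(F(A),A\big)$ via Theorem \ref{th3}(1) and check that the order descends to the quotient $FQ(A)$, but the argument through $Q\big(F(A),A\big)$ is cleaner and also makes transparent how this recovers Proposition \ref{prop2}(1) when $A$ is a set of conjugacy-class representatives of a bi-orderable group.
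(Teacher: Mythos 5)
Your proposal is correct and follows essentially the same route as the paper, which derives the corollary from Theorem \ref{th3}(2) together with the bi-orderability of free groups \cite{v}, implicitly identifying $FQ(A)$ with $Q\big(F(A),A\big)$. Your explicit verification of that identification via the centralizer computation $C_{F(A)}(a)=\langle a\rangle$ is a worthwhile detail that the paper leaves unstated, but it does not change the argument.
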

\medskip

Next, we give a sufficient condition for the failure of left-orderability in quandles.

\begin{proposition}\label{prop11}
Let $Q$ be a quandle generated by a set $X$ such that the map $\eta:Q\to\Conj(\Env(Q))$ is injective. If there exist two distinct commuting elements in $\Env(Q)$ that are not inverses of each other and that are conjugates of elements from $\eta(X)^{\pm1}$, then the quandle $Q$ is not left-orderable.
\end{proposition}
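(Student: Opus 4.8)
The strategy is to use the injectivity of $\eta$ to convert the hypothesised commutation relation in $\Env(Q)$ into an identity of the form $p*q=p$ with $p\ne q$ in $Q$, which is then incompatible with any left-ordering. To begin, I would check that the subset $C:=\eta(Q)\cup\eta(Q)^{-1}$ of $\Env(Q)$ is closed under conjugation. Since $X$ generates $Q$, Theorem \ref{th1} gives that $\Env(Q)$ is generated by $\{\tilde{x}\mid x\in X\}$, so it suffices to see that $\eta(Q)$ is stable under conjugation by each $\tilde{y}$ and each $\tilde{y}^{-1}$ with $y\in X$; this holds because $\eta$ is a quandle homomorphism into $\Conj(\Env(Q))$, so that $\tilde{y}^{-1}\eta(q)\tilde{y}=\eta(q*y)\in\eta(Q)$ and $\tilde{y}\eta(q)\tilde{y}^{-1}=\eta(q*^{-1}y)\in\eta(Q)$ for all $q\in Q$. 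Since $\eta(X)^{\pm1}\subseteq C$, it follows that every conjugate of an element of $\eta(X)^{\pm1}$ lies in $C$; in particular the hypothesised elements $a$ and $b$ both lie in $C$.

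Next I would normalise $a$ and $b$: replacing $a$ by $a^{-1}$ when $a\notin\eta(Q)$, and likewise $b$ by $b^{-1}$ when $b\notin\eta(Q)$, I may assume $a,b\in\eta(Q)$. A short check shows that the two hypotheses $a\ne b$ and $a\ne b^{-1}$ together prevent any of these swaps from making $a$ equal to $b$, so still $a\ne b$; and since commuting elements have commuting inverses, $a$ and $b$ still commute. Writing $a=\eta(p)$ and $b=\eta(q)$, injectivity of $\eta$ and $a\ne b$ give $p\ne q$, while $\eta(p*q)=\widetilde{p*q}=\tilde{q}^{-1}\tilde{p}\tilde{q}=\eta(q)^{-1}\eta(p)\eta(q)=\eta(p)$, using that $\eta(p)$ and $\eta(q)$ commute; injectivity of $\eta$ now forces $p*q=p$.

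Finally, suppose for contradiction that $<$ is a left-ordering on $Q$. Since $p\ne q$, either $p<q$ or $q<p$. If $p<q$, the defining property of the left-order applied with $z=p$ yields $p*p<p*q$, which by idempotency and $p*q=p$ reads $p<p$; if $q<p$, the same property with $z=p$ yields $p*q<p*p$, again reading $p<p$. Either case is absurd, so $Q$ is not left-orderable. The one point needing care is the inverse bookkeeping in the normalisation step --- verifying that $a\ne b$ and $a\ne b^{-1}$ are exactly the hypotheses needed to keep $p$ and $q$ distinct after possibly inverting one or both of $a$ and $b$ --- and everything else is an immediate computation.
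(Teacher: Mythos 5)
Your proof is correct and follows essentially the same route as the paper's: identify $a$ and $b$ (up to inversion) as images $\eta(p)$, $\eta(q)$ of quandle elements, use the commutation together with injectivity of $\eta$ to deduce $p*q=p$ with $p\neq q$, and contradict left-orderability via $p=p*p\,\blackdiamond\,p*q=p$. The only difference is presentational: where the paper writes out the conjugating words explicitly and reads off a left-associated quandle expression, you observe that $\eta(Q)\cup\eta(Q)^{-1}$ is closed under conjugation and contains $\eta(X)^{\pm1}$, and your inverse bookkeeping in the normalisation step (using both $a\neq b$ and $a\neq b^{-1}$) is accurate.
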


\begin{proof}
Recall from Theorem \ref{th1} that the set $\eta(X)= \{ \tilde{x}~|~x \in X\}$ is a generating set for the enveloping group $\Env(Q)$. Let $\eta(X)^{-1}$ denote the set of inverses of elements in $\eta(X)$, and $\tilde{a},\tilde{b}\in\Env(Q)$ with ${\tilde{a}}^{\,\pm1}\neq\tilde{b}$ be two commuting elements that are conjugates of elements from $\eta(X)^{\pm1}$. Then we can write
\begin{align*}
\tilde{a}&={\tilde{x}_1}^{\,-d_1}{\tilde{x}_2}^{\,-d_2}\cdots{\tilde{x}_{m-1}}^{\,-d_{m-1}}\tilde{x}_m^{\,d_m}\:\!{\tilde{x}_{m-1}}^{\,d_{m-1}}\cdots{\tilde{x}_2}^{\,d_2}{\tilde{x}_1}^{\,d_1}\quad\textrm{and}\\
\tilde{b}&={\tilde{y}_1}^{\,-e_1}{\tilde{y}_2}^{\,-e_2}\cdots{\tilde{y}_{n-1}}^{\,-e_{n-1}}\tilde{y}_n^{\,e_n}\:\!{\tilde{y}_{n-1}}^{\,e_{n-1}}\cdots{\tilde{y}_2}^{\,e_2}{\tilde{y}_1}^{\,e_1}\,,
\end{align*}
where $\tilde{x}_i,\tilde{y}_i\in\eta(X)$ and $d_i,e_i\in\{-1,1\}$ for all $i$. For each $i$, there exist $x_i,y_i\in X$ such that $\tilde{x}_i=\eta(x_i)$ and $\tilde{y}_i=\eta(y_i)$. We get
\begin{align*}
{\tilde{a}}^{\;\!d_m}&=\eta(x_1)^{-d_1}\eta(x_2)^{-d_2}\cdots\eta(x_{m-1})^{-d_{m-1}}\eta(x_m)\:\!\eta(x_{m-1})^{d_{m-1}}\cdots\eta(x_2)^{d_2}\eta(x_1)^{d_1}\\
&=\eta(x_m)*^{d_{m-1}}\eta(x_{m-1})*^{d_{m-2}}\cdots*^{d_1}\eta(x_1),~\textrm{by quandle operation in}~ \Conj(\Env(Q))\\
&=\eta\!\left(x_m*^{d_{m-1}}x_{m-1}*^{d_{m-2}}\cdots*^{d_1}x_1\right),~\textrm{since $\eta$ is a quandle homomorphism}\\
&=\eta(a)
\end{align*}
and similarly
\begin{equation*}
{\tilde{b}}^{\;\!e_n}=\eta(b),
\end{equation*}
where $a=x_m*^{d_{m-1}}x_{m-1}*^{d_{m-2}}\cdots*^{d_1}x_1$ and $b=y_n*^{e_{n-1}}y_{n-1}*^{e_{n-2}}\cdots*^{e_1}y_1$. 
\par

Suppose on the contrary that the quandle $Q$ is left-ordered with respect to a linear order $<$. Since ${\tilde{a}}^{\,\pm1}\neq\tilde{b}$, we get ${\tilde{a}}^{\;\!d_m}\neq{\tilde{b}}^{\;\!e_n}$, and thus $\eta(a)\neq\eta(b)$. This implies that $a\neq b$. In other words, we have $a\,\blackdiamond\, b$ for some $\blackdiamond\in\{<,>\}$, and hence $a=a*a\,\blackdiamond\, a*b$. Since $\tilde{b}^{\,-1}\tilde{a}\tilde{b}=\tilde{a}$, we have $\tilde{b}^{\,-1}{\tilde{a}}^{\;\!d_m}\tilde{b}={\tilde{a}}^{\;\!d_m}$, and thus
\begin{equation*}
\eta(a*^{e_n}b)=\eta(a)*^{e_n}\eta(b)=\eta(b)^{-e_n}\eta(a)\eta(b)^{e_n}=\tilde{b}^{\,-1}{\tilde{a}}^{\;\!d_m}\tilde{b}={\tilde{a}}^{\;\!d_m}=\eta(a).
\end{equation*}
The map $\eta$ being a monomorphism gives $a*^{e_n}b=a$, and hence $a*b=a$. This is a contradiction, since we have $a\,\blackdiamond\, a*b$.
\end{proof}

If $Q$ is a trivial quandle with more than one element, then its enveloping group $\Env(Q)$ is the free abelian group of rank $|Q|$. Thus, if $x, y \in Q$ are two distinct elements, then $\tilde{x}, \tilde{y} \in \Env(Q)$ are two distinct commuting elements that are not inverses of each other. Thus, $Q$ is not left-orderable, which can also be checked directly.

\begin{corollary}
Let $K$ be a prime knot such that $Q(K)$ is generated by a set $X$. If there exist two distinct commuting elements in $G(K)$ that are not inverses of each other and that are conjugates of elements from $\eta(X)^{\pm1}$, then $Q(K)$ is not left-orderable.
\end{corollary}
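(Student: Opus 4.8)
The statement is the knot-theoretic incarnation of Proposition \ref{prop11}, so essentially nothing new has to be proved: the only hypothesis of Proposition \ref{prop11} that is not already built into the assumptions of the corollary is the requirement that the natural map $\eta\colon Q(K)\to\Conj(\Env(Q(K)))$ be injective. Since the enveloping group $\Env(Q(K))$ is the knot group $G(K)$ by Joyce and Matveev \cite{j1,j2,m} (as recalled earlier in the paper), this reduces to checking that $Q(K)$ embeds in $\Conj(G(K))$.

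The plan is to obtain this injectivity from the classical description of the knot quandle of a prime knot. Starting from a Wirtinger presentation $Q(K)=Q\langle x_1,\dots,x_n\mid r_1,\dots\rangle$, every generator $x_i$ is sent by $\eta$ to a meridian $\tilde{x}_i$ of $K$; since the conjugacy class of a meridian generates $G(K)$ (its normal closure is all of $G(K)$, as the quotient is $\pi_1(S^3)$), the subquandle of $\Conj(G(K))$ generated by $\eta(x_1),\dots,\eta(x_n)$ is exactly the full conjugacy class of a meridian. Thus $\eta$ always surjects onto $(\text{conjugacy class of a meridian})\subseteq\Conj(G(K))$. The input I want to invoke is Joyce's theorem that $Q(K)$ is a faithful invariant of prime knots \cite{j1,j2} (see also \cite{w}): for a prime knot this surjection is in fact an isomorphism of quandles, and hence $\eta$ is injective. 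Primeness enters precisely through the invocation of this theorem.

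With the injectivity of $\eta$ in hand, the remaining hypotheses of Proposition \ref{prop11} are word for word those of the corollary: $Q(K)$ is generated by $X$, and $G(K)=\Env(Q(K))$ contains two distinct commuting elements, not inverses of each other, that are conjugates of elements from $\eta(X)^{\pm1}$. Proposition \ref{prop11} therefore applies directly and gives that $Q(K)$ is not left-orderable.

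The substantive point — and the only real obstacle — is locating and quoting the right form of the statement that $\eta$ is injective for knot quandles of prime knots, together with a clean justification of why the primeness assumption is what makes it available; the deduction from Proposition \ref{prop11} itself is then immediate and requires no further argument.
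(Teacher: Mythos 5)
Your reduction is exactly the paper's proof: the only hypothesis of Proposition \ref{prop11} not already assumed in the corollary is the injectivity of $\eta\colon Q(K)\to\Conj(\Env(Q(K)))=\Conj(G(K))$, and once that is supplied the conclusion is immediate. The paper obtains the injectivity by citing Ryder \cite[Corollary 3.6]{r}, which states precisely that for a prime knot the natural map $Q(K)\to\Conj(G(K))$ is a monomorphism of quandles.

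The one point to repair is your proposed justification of that injectivity. Joyce's theorem that the knot quandle is a complete (faithful) invariant of knots says that an isomorphism $Q(K_1)\cong Q(K_2)$ forces $K_1$ and $K_2$ to be (weakly) equivalent; it is a statement about the quandle as an abstract invariant and does not by itself say anything about whether the map $\eta$ into $\Conj(G(K))$ is injective. Indeed, in Joyce's model $Q(K)$ is the coset space of the peripheral subgroup $P$ in $G(K)$, and $\eta$ identifies two cosets exactly when the corresponding conjugates of the meridian coincide, i.e.\ injectivity of $\eta$ amounts to $P$ being the full centralizer of the meridian --- a condition that can fail for composite knots and whose validity for prime knots is the content of Ryder's result, not of the classification theorem. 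So the statement you need is true and is exactly the one the paper quotes, but it should be attributed to \cite{r} rather than derived from \cite{j1,j2}; with that citation in place your argument coincides with the paper's.
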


\begin{proof}
If $K$ is a prime knot, then by \cite[Corollary 3.6]{r}, the map $\eta: Q\left(K\right)\to\Conj(G(K))$ is a monomorphism of quandles. The result now follows from Proposition \ref{prop11}.
\end{proof}

\medskip

\section{Orderability of some link quandles}\label{orderability-knot-quandle}
Problem 3.16 in \cite{bps} asks to determine whether link quandles are orderable. We investigate orderability of link quandles in the remaining two sections and provide a solution to this problem in some cases. The next result relates orderability of the enveloping group of a quandle to that of the quandle itself.

\begin{proposition}\label{prop4}
Let $Q$ be a quandle such that the natural map $\eta:Q\to \Conj(\Env(Q))$ is injective. If $\Env(Q)$ is a bi-orderable group, then $Q$ is a right-orderable quandle.
\end{proposition}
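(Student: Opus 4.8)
The plan is to transport the bi-ordering of $\Env(Q)$ back to $Q$ along the injection $\eta$, exploiting the fact that conjugation in a group preserves a two-sided order. Concretely, fix a bi-ordering $<$ on $G:=\Env(Q)$ and define a relation $<'$ on $Q$ by declaring $x<'y$ exactly when $\eta(x)<\eta(y)$. Since $\eta$ is injective, trichotomy and transitivity are inherited verbatim from $<$, so $<'$ is a linear order on $Q$.

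Next I would check that $<'$ is a right ordering. Suppose $x<'y$, i.e.\ $\eta(x)<\eta(y)$, and let $z\in Q$ be arbitrary. Because $\eta$ is a quandle homomorphism into $\Conj(\Env(Q))$, we have $\eta(x*z)=\eta(z)^{-1}\eta(x)\eta(z)$ and $\eta(y*z)=\eta(z)^{-1}\eta(y)\eta(z)$. Multiplying the inequality $\eta(x)<\eta(y)$ on the left by $\eta(z)^{-1}$ and then on the right by $\eta(z)$ — each of which preserves $<$ precisely because the order on $G$ is two-sided — yields $\eta(z)^{-1}\eta(x)\eta(z)<\eta(z)^{-1}\eta(y)\eta(z)$, that is, $\eta(x*z)<\eta(y*z)$, which by definition of $<'$ means $x*z<'y*z$. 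Hence $Q$ is right-orderable. Alternatively, and more economically, one may simply note that injectivity of $\eta$ identifies $Q$ with the subquandle $\eta(Q)\subseteq\Conj(\Env(Q))$ and then invoke the corollary to Proposition \ref{prop2} (with $n=1$), which says that any subquandle of $\Conj_n(G)$ for a bi-orderable group $G$ is right-orderable.

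There is no real obstacle in this argument; the only point worth emphasizing is that it genuinely uses bi-orderability of $\Env(Q)$ and not merely one-sided orderability — both the left- and right-invariance of $<$ are needed to conclude that conjugation by $\eta(z)$ is order-preserving. This is also the reason one should not expect an analogous statement producing a \emph{left} ordering on $Q$ from a bi-ordering on $\Env(Q)$.
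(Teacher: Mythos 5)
Your proposal is correct and matches the paper's argument: the paper simply cites Proposition \ref{prop2}(1) to get that $\Conj(\Env(Q))$ is right-orderable and then pulls the order back along the injection $\eta$, which is exactly the computation you spell out (and your ``more economical'' alternative is literally the paper's proof). No gaps.
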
 

\begin{proof}
Since $\Env(Q)$ is a bi-orderable group, by Proposition \ref{prop2} (1), $\Conj(\Env(Q))$ is a right-orderable quandle. Since $\eta$ is injective, it follows that $Q$ is right-orderable.
\end{proof} 

\begin{corollary}
If $Q$ is a commutative, latin or simple quandle such that $\Env(Q)$ is a bi-orderable group, then $Q$ is right-orderable.
\end{corollary}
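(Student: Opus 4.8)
The plan is to reduce the statement to Proposition~\ref{prop4}. Since $\Env(Q)$ is assumed bi-orderable, that proposition gives right-orderability of $Q$ as soon as the natural map $\eta\colon Q\to\Conj(\Env(Q))$ is injective, so the whole task is to verify this injectivity in each of the three cases (and to handle separately the one degenerate situation in which it can fail). The key preliminary fact I would establish first is: a single application of axiom \textbf{Q3} together with cancellation gives $S_{x*y}=S_yS_xS_y^{-1}$ for all $x,y\in Q$, whence the assignment $\tilde{x}\mapsto S_x^{-1}$ is compatible with the defining relations $\widetilde{x*y}=\tilde{y}^{-1}\tilde{x}\tilde{y}$ of $\Env(Q)$ and therefore extends to a group homomorphism $\psi\colon\Env(Q)\to\Aut(Q)$. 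Since $\psi\bigl(\eta(x)\bigr)=S_x^{-1}$, the composite $\psi\circ\eta$ is exactly the map $x\mapsto S_x^{-1}$; hence, whenever $x\mapsto S_x$ is injective on $Q$, so is $\eta$.

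With this tool the latin and commutative cases are short. If $Q$ is latin and $S_x=S_y$, then $L_z(x)=z*x=z*y=L_z(y)$ for every $z\in Q$, so $x=y$ because each $L_z$ is injective; thus $x\mapsto S_x$ is injective and Proposition~\ref{prop4} applies. If $Q$ is commutative and $S_x=S_y$, then evaluating at $x$ and at $y$ gives $x=x*x=x*y$ and $y*x=y*y=y$, so commutativity yields $x=x*y=y*x=y$; again $x\mapsto S_x$ is injective and we finish as before.

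For a simple quandle $Q$ the map $\eta$, being a quandle homomorphism, is either injective or constant. In the injective case Proposition~\ref{prop4} completes the proof. In the constant case $\psi\circ\eta$ is also constant, so all the symmetries $S_x$ coincide with one fixed automorphism $S$; but then $S(z)=S_z(z)=z*z=z$ for every $z\in Q$, so $S=\id_Q$ and $x*y=S_y(x)=x$ for all $x,y\in Q$, i.e.\ $Q$ is trivial. A trivial quandle is right-orderable, since any linear order on its underlying set is preserved by right multiplication ($x*z=x$), which settles the last case.

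I expect the only genuinely non-routine point to be setting up the auxiliary homomorphism $\psi$: one must verify $S_{x*y}=S_yS_xS_y^{-1}$, which reduces to the single \textbf{Q3}-identity $\bigl((z*^{-1}y)*x\bigr)*y=z*(x*y)$, and one must keep the conjugation convention straight, so that it is $\tilde{x}\mapsto S_x^{-1}$, rather than $\tilde{x}\mapsto S_x$, that descends to $\Env(Q)$. Once $\psi$ is available, the three cases reduce to the elementary observations above.
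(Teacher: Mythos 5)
Your proof is correct and follows the same route as the paper, which simply asserts that $\eta$ is injective for commutative, latin or simple quandles and invokes Proposition \ref{prop4}; you supply the omitted verification via the standard homomorphism $\Env(Q)\to\Aut(Q)$, $\tilde{x}\mapsto S_x^{-1}$ (whose well-definedness rests on the correctly checked identity $S_{x*y}=S_yS_xS_y^{-1}$). Your extra care in the simple case --- handling the possibility that $\eta$ is constant, which forces $Q$ trivial and hence right-orderable --- is a sound completion of a detail the paper glosses over.
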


\begin{proof}
It is not difficult to see that the map $\eta$ is injective for a commutative, latin or simple quandle.
\end{proof}

\begin{corollary}\label{cor1}
If the knot group of a prime knot is bi-orderable, then its knot quandle is right-orderable.
\end{corollary}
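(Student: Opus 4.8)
The plan is to deduce this directly from Proposition \ref{prop4}, so the only thing that needs to be supplied is the injectivity of the natural map $\eta$ for knot quandles of prime knots. First I would recall that, by the foundational work of Joyce and Matveev, the enveloping group of the knot quandle $Q(K)$ is precisely the knot group $G(K)$; thus the natural map under consideration is $\eta\colon Q(K)\to\Conj(G(K))$, and the hypothesis that the knot group is bi-orderable is exactly the hypothesis that $\Env(Q(K))$ is bi-orderable.

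Next I would invoke the result of Przytycki, cited here as \cite[Corollary 3.6]{r}, that for a prime knot $K$ the map $\eta\colon Q(K)\to\Conj(G(K))$ is a monomorphism of quandles. This is the one external input; it genuinely uses primeness, as the map $\eta$ can fail to be injective for composite knots, where distinct quandle elements may be conjugate in the group.

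With the injectivity of $\eta$ established and $\Env(Q(K))=G(K)$ bi-orderable by assumption, the conclusion follows at once from Proposition \ref{prop4}: bi-orderability of $G(K)$ makes $\Conj(G(K))$ right-orderable by Proposition \ref{prop2}(1), and an injective quandle homomorphism into a right-orderable quandle pulls back the order, so $Q(K)$ inherits a right-ordering.

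I do not expect any real obstacle: the statement is essentially a repackaging of Proposition \ref{prop4} together with the primeness hypothesis, which serves only to guarantee injectivity of $\eta$. The one point worth emphasising is that this argument can yield only right-orderability, not bi-orderability, since $\Conj(G)$ is never left-orderable for a non-trivial group $G$; obtaining a \emph{left}-ordering on $Q(K)$ would require a genuinely different method.
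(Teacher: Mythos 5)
Your proof is correct and follows exactly the paper's argument: primeness gives injectivity of $\eta\colon Q(K)\to\Conj(G(K))$ via \cite[Corollary 3.6]{r}, and Proposition \ref{prop4} then yields right-orderability from the bi-orderability of $G(K)=\Env(Q(K))$. One small slip: the injectivity result \cite[Corollary 3.6]{r} is due to Ryder, not Przytycki, though the citation itself is the right one.
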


\begin{proof}
Let $K$ be a prime knot such that its knot group $G(K)$ is bi-orderable. Since $K$ is prime, by \cite[Corollary 3.6]{r}, the map $\eta: Q\left(K\right)\to \Conj(G\left(K\right))$ is injective. Thus, by Proposition \ref{prop4}, the knot quandle $Q\left(K\right)$ is right-orderable.
\end{proof}

\begin{corollary}\label{cor4}
If all the roots of the Alexander polynomial of a fibered prime knot are real and positive, then its knot quandle is right-orderable.
\end{corollary}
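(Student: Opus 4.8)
The plan is to deduce this immediately from two facts already available: the bi-orderability criterion of Perron and Rolfsen \cite{pr} for knot groups of fibered knots, and Corollary \ref{cor1} above, which promotes bi-orderability of the knot group of a \emph{prime} knot to right-orderability of its knot quandle. So the proof is essentially a concatenation: fibered $+$ (all Alexander roots real positive) $\Rightarrow$ $G(K)$ bi-orderable $\Rightarrow$ (using primeness) $Q(K)$ right-orderable.

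In detail, I would first invoke \cite{pr}: if $K$ is a fibered knot all of whose Alexander polynomial roots are real and positive, then the knot group $G(K)$ is bi-orderable. The mechanism there — realizing $\pi_1$ of the knot complement as a semidirect product $F \rtimes \mathbb{Z}$ with $F$ free, and producing a bi-order on $G(K)$ from a suitable $F$-invariant order that the monodromy automorphism preserves, the latter being possible precisely because the eigenvalues (the Alexander roots) are real and positive — is taken here as a black box. Next, since by hypothesis $K$ is also prime, \cite[Corollary 3.6]{r} tells us that the natural map $\eta\colon Q(K)\to \Conj(G(K))$ is injective, so Proposition \ref{prop4} applies and converts the bi-orderability of $G(K)$ into right-orderability of $Q(K)$. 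Equivalently, one simply cites Corollary \ref{cor1} verbatim with input the Perron--Rolfsen conclusion.

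There is no genuinely new quandle-theoretic content to produce; in particular no fresh inequality juggling of the kind done in Theorem \ref{th2} is required. The one point worth flagging — and the closest thing to an obstacle, though a mild bookkeeping one — is that the two inputs impose different hypotheses on $K$: \cite{pr} needs $K$ fibered, while the group-to-quandle passage via \cite[Corollary 3.6]{r} and Proposition \ref{prop4} needs $K$ prime. The statement assumes both, so the hypotheses line up and nothing further is needed. It is also worth remarking, after the proof, that this yields infinitely many examples, the smallest being the figure-eight knot, which is prime and fibered with Alexander polynomial $t^{2}-3t+1$ whose roots $\tfrac{3\pm\sqrt{5}}{2}$ are real and positive; hence its knot quandle is right-orderable.
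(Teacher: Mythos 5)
Your proposal is exactly the paper's argument: apply the Perron--Rolfsen theorem to conclude that $G(K)$ is bi-orderable, then invoke Corollary \ref{cor1} (i.e.\ primeness gives injectivity of $\eta$ via Ryder's result, and Proposition \ref{prop4} converts bi-orderability of the group into right-orderability of the quandle). The hypotheses match up as you note, and the figure-eight example is the same one the paper records afterward.
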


\begin{proof}
Let $K$ be a fibered prime knot all the roots of whose Alexander polynomial are real and positive. Then, by \cite[Theorem 1.1]{pr}, $G(K)$ is a bi-orderable group. The result now follows from Corollary \ref{cor1}.
\end{proof}

As a special case, it follows that the knot quandle of the figure eight knot is right-orderable.
\medskip

\begin{definition}
A link $L$ is said to be {\it positive} if there exists a diagram $D$ of $L$ such that all its crossings are positive.
\end{definition}

A diagram $D$ of a link $L$ is said to be
\begin{itemize}
\item {\it minimal} if it is having the minimal number of crossings among all diagrams of $L$.
\item {\it positive} if all its crossings are positive.
\item {\it positive minimal} if it is both positive as well as minimal.
\item {\it minimal positive} if it is positive and having the minimal number of crossings among all positive diagrams of $L$.
\end{itemize}

The terms {\it negative link}, {\it negative diagram}, {\it negative minimal diagram} and {\it minimal negative diagram} are defined analogously.

If a positive minimal diagram exists for a positive link $L$, then it is always a minimal positive diagram of $L$. There are examples of positive links for which positive minimal diagrams do not exist. For example, the number of crossings in a minimal positive diagram of the knot $11_{550}$ is $12$ while its crossing number is $11$. In other words, a positive minimal diagram does not exist for this knot. See \cite{n1,s} for more details.

\begin{theorem}\label{th6}
Let $L_1$ be any link and $L_2$ a non-trivial positive (negative) link. Suppose there exists a minimal positive (negative) diagram $D_2$ of $L_2$ such that generators of the link quandle $Q(L_2)$ corresponding to arcs in $D_2$ are pairwise distinct. Then the link quandle of a connected sum of links $L_1$ and $L_2$ is not bi-orderable. In particular, the link quandle $Q(L_2)$ is not bi-orderable.
\end{theorem}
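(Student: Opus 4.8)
I would first establish the ``in particular'' clause --- that $Q(L_2)$ is not bi-orderable --- by a \emph{smallest generator} argument, and then deduce the connected sum case via a retraction. The main algebraic input is Proposition~\ref{prop10}: in any bi-ordered quandle, $x\neq y$ forces $x*y$ to lie strictly between $x$ and $y$ (so $x<x*y<y$ or $y<x*y<x$); in particular $x*y\notin\{x,y\}$.

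\textbf{The smallest generator argument.} Assume that $Q(L_2)$ carries a bi-order $<$ and work with the given minimal positive diagram $D_2$, whose arc-generators $a_1,\dots,a_k$ are pairwise distinct by hypothesis. Since $L_2$ is non-trivial, $D_2$ has a crossing, and since $D_2$ is minimal it is reduced, so every component of $D_2$ that carries a crossing also carries an undercrossing; hence the arcs lying on such components are exactly the arcs that occur as the outgoing under-arc of a (necessarily unique) crossing. Let $a_m$ be the $<$-least such arc-generator and let $c$ be the crossing at which $a_m$ is the outgoing under-arc. Since $D_2$ is \emph{positive}, the defining relation at $c$ is
\[
a_m = a_p * a_q,
\]
where $a_p$ is the incoming under-arc and $a_q$ the over-arc at $c$. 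The arcs $a_p$ and $a_q$ again lie on components carrying a crossing, so $a_m\le a_p$ and $a_m\le a_q$. If $a_p\neq a_q$, then $a_m=a_p*a_q$ lies strictly between $a_p$ and $a_q$, so $a_m\notin\{a_p,a_q\}$ and $\min(a_p,a_q)<a_m$, contradicting the minimality of $a_m$. If $a_p=a_q$, then $a_m=a_p*a_p=a_p$ by idempotency; now $a_p$ (the incoming under-arc) could equal $a_m$ (the outgoing under-arc) only if $a_m$'s component had a single undercrossing, namely $c$, but then $a_p=a_q$ would make that component the over-strand at $c$ as well, forcing $c$ to be a self-crossing of a one-arc component --- a kinked unknot, which cannot occur in the minimal diagram $D_2$. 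So $a_p\neq a_m$, again a contradiction. Therefore $Q(L_2)$ is not bi-orderable. If instead $D_2$ is a minimal \emph{negative} diagram, the relation at a negative crossing rearranges to $(\text{incoming under-arc})=(\text{outgoing under-arc})*(\text{over-arc})$, and the same argument goes through with $a_m$ taken to be the $<$-least arc-generator that occurs as an incoming under-arc.

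\textbf{The connected sum.} Here I would use that $Q(L_1\#L_2)$ is the coproduct of $Q(L_1)$ and $Q(L_2)$ amalgamated over the one-element quandle along a meridian of each; concretely, the Wirtinger presentation of a connected-sum diagram presents $Q(L_1\#L_2)$ by the disjoint union of the Wirtinger generators and relations of $D_1$ and of $D_2$, together with one extra relation $\alpha_1=\alpha_2$ identifying an arc $\alpha_1$ of $D_1$ with an arc $\alpha_2$ of $D_2$ (the quandle analogue of $G(L_1\#L_2)=G(L_1)*_{\mathbb{Z}}G(L_2)$). This presentation yields a quandle retraction $\phi\colon Q(L_1\#L_2)\to Q(L_2)$ that fixes every generator coming from $D_2$ and sends every generator coming from $D_1$ to the class of $\alpha_2$: it is well defined because every relation of $D_1$ collapses to a trivial identity and $\alpha_1=\alpha_2$ becomes $\alpha_2=\alpha_2$, and the inclusion $s$ of the $D_2$-generators is a quandle-homomorphic section of $\phi$. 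Being an injective quandle homomorphism, $s$ pulls a bi-order $<$ on $Q(L_1\#L_2)$ back to the bi-order $x\prec y\iff s(x)<s(y)$ on $Q(L_2)$. Hence $Q(L_1\#L_2)$ being bi-orderable would make $Q(L_2)$ bi-orderable, contradicting the previous paragraph.

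\textbf{Main obstacle.} The heart of the argument --- the smallest generator contradiction via Proposition~\ref{prop10} --- is short and robust. The steps I expect to require the most care are the two supporting ones: pinning down the amalgamated presentation of $Q(L_1\#L_2)$ and verifying that $\phi$ and $s$ are well-defined quandle maps, and the diagram combinatorics showing that minimality (reducedness) of $D_2$ excludes the degenerate configurations --- crossingless split components, and a crossing whose over-arc coincides with an under-arc --- that would otherwise break the two case distinctions.
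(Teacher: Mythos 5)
Your first step --- the direct proof that $Q(L_2)$ itself is not bi-orderable via a least outgoing under-arc and Proposition~\ref{prop10} --- is correct, and is in fact a clean simplification of the paper's argument: since every arc in sight lies in the set over which you minimize, a single extremal element suffices. The supporting combinatorics (a component of a minimal positive diagram that carries a crossing carries an undercrossing; the degenerate case $a_p=a_q$ forces a kink, which a minimal positive diagram cannot have) are stated a little loosely but are all salvageable.

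The gap is in the connected-sum step. The Wirtinger presentation of a connected-sum diagram $D$ is \emph{not} the disjoint union of the presentations of $D_1$ and $D_2$ together with one relation $\alpha_1=\alpha_2$: forming the connected sum cuts the arc $\alpha_2$ of $D_2$ into two arcs (the paper's $\check a$ and $\hat a$), producing two generators $\check x_0,\hat x_0$ of $Q(L_1\#L_2)$ that need not be equal --- travelling along the strand through $D_1$ only gives $\check x_0=\hat x_0*^{\epsilon_1}w_1*^{\epsilon_2}\cdots*^{\epsilon_r}w_r$ for over-arcs $w_i$ of $D_1$. Each relation of $D_2$ that involved $\alpha_2$ now involves one or the other of $\check x_0,\hat x_0$ depending on which side of the cut the relevant crossing lies, so your section $s$ (which sends $\alpha_2$ to a single chosen connecting arc) does not visibly respect the relations of $Q(L_2)$; hence the injectivity of $Q(L_2)\to Q(L_1\#L_2)$, on which your whole reduction rests, is unproved. (The retraction $\phi$ is fine, but a surjection onto a non-bi-orderable quandle proves nothing: bi-orderability passes to subquandles, not to quotients.) This is precisely the difficulty the paper's proof is organized around: it works inside $Q(L_1\#L_2)$ with the set $\{\hat x_0,x_1,\dots,x_n\}$, treats $\check x_0$ as an extra element about which nothing is known, and therefore uses \emph{both} a minimum and a maximum so that of the two resulting smaller/larger elements at least one is forced back into the set. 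To repair your argument you must either prove the amalgamation/embedding statement for link quandles of connected sums, or run your least-element argument directly on the connected-sum diagram --- at which point the stray generator $\check x_0$ forces you into the paper's min-and-max device.
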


\begin{proof}
Let $L=L_1\#L_2$ be the link obtained by taking the connected sum of a component $K_1$ of $L_1$ with a component $K_2$ of $L_2$. Suppose $D_1$ be a diagram of $L_1$ such that the component $K_1$ of $L_1$ has an exterior arc in $D_1$, and $D_2$ be a diagram of $L_2$ as described in the hypothesis of the theorem. Let $D$ be a diagram of $L$ obtained using diagrams $D_1$ and $D_2$ without introducing any extra crossing and possibly turning over the diagram $D_1$ if required. The diagram $D$ looks as shown in Figure \ref{fig5} or in Figure \ref{fig6} depending on whether the component $K_2$ of $L_2$ has an exterior arc in $D_2$ or not. In both the figures, the diagram $C_1$ is either $D_1$ or it is obtained by turning over $D_1$.

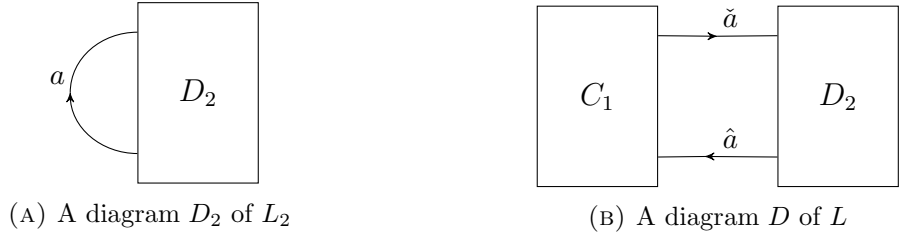
\begin{figure}[H]
\begin{subfigure}{0.45\textwidth}
\centering
\begin{tikzpicture}
\node (B) at (0,0) [draw, minimum width=16mm, minimum height=24mm]{{\large $D_2$}};
\begin{knot}[clip width=6, clip radius=4pt]
\strand[-] (-1.7,0) to [out=up, in=left] (B.135);
\strand[->] (B.225) to [out=left, in=down] (-1.7,0);
\end{knot};
\node at (-1.87,0.2) {$a$};
\end{tikzpicture}
\caption{A diagram $D_2$ of $L_2$}
\end{subfigure}
\begin{subfigure}{0.45\textwidth}
\centering
\begin{tikzpicture}
\node (A) at (-1.6,0) [draw, minimum width=16mm, minimum height=24mm]{{\large $C_1$}};
\node (B) at (1.6,0) [draw, minimum width=16mm, minimum height=24mm]{{\large $D_2$}};
\begin{knot}[clip width=6, clip radius=4pt]
\strand[->] (A.45) to [out=right, in=left] (-0.01,0.8);
\strand[-] (-0.01,0.8) to [out=right, in=left] (B.135);
\strand[->] (B.225) to [out=left, in=right] (-0.17,-0.8);
\strand[-] (-0.17,-0.8) to [out=left, in=right] (A.315);
\end{knot};
\node at (0.17,1.05) {$\check{a}$}; \node at (0.17,-0.55) {$\hat{a}$};
\end{tikzpicture}	
\caption{A diagram $D$ of $L$}
\end{subfigure}
\caption{If the component $K_2$ of $L_2$ has an exterior arc in $D_2$}
\label{fig5}
\end{figure}

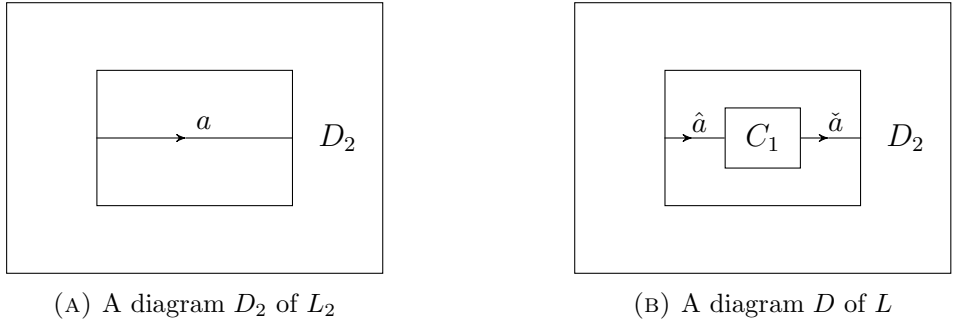
\begin{figure}[H]
\begin{subfigure}{0.45\textwidth}
\centering
\begin{tikzpicture}
\node (B) at (0,0) [draw, minimum width=26mm, minimum height=18mm]{};
\node (C) at (0,0) [draw, minimum width=50mm, minimum height=36mm]{};
\node at (1.9,0) {{\large $D_2$}};
\begin{knot}[clip width=6, clip radius=4pt]
\strand[-] (-0.12,0) to [out=right, in=left] (B.0);
\strand[->] (B.180) to [out=right, in=left] (-0.12,0);
\end{knot};
\node at (0.12,0.2) {$a$};
\end{tikzpicture}
\caption{A diagram $D_2$ of $L_2$}
\end{subfigure}
\begin{subfigure}{0.45\textwidth}
\centering
\begin{tikzpicture}
\node (A) at (0,0) [draw, minimum width=10mm, minimum height=8mm]{{\large $C_1$}};
\node (B) at (0,0) [draw, minimum width=26mm, minimum height=18mm]{};
\node (C) at (0,0) [draw, minimum width=50mm, minimum height=36mm]{};
\node at (1.9,0) {{\large $D_2$}};
\begin{knot}[clip width=6, clip radius=4pt]
\strand[->] (A.0) to [out=right, in=left] (0.87,0);
\strand[-] (0.87,0) to [out=right, in=left] (B.0);
\strand[->] (B.180) to [out=right, in=left] (-0.94,0);
\strand[-] (-0.94,0) to [out=right, in=left] (A.180);
\end{knot};
\node at (0.97,0.22) {$\check{a}$}; \node at (-0.84,0.22) {$\hat{a}$};
\end{tikzpicture}	
\caption{A diagram $D$ of $L$}
\end{subfigure}
\caption{If the component $K_2$ of $L_2$ has no exterior arc in $D_2$}
\label{fig6}
\end{figure}

Let $x_0,x_1,\ldots,x_n$ be generators of the link quandle $Q(L_2)$ corresponding to arcs in $D_2$. We may assume that $x_0$ corresponds to the arc $a$ in $D_2$ that splits into the connecting arcs $\check{a}$ and $\hat{a}$ in $D$. Looking at Figure \ref{fig5} and Figure \ref{fig6}, the arc $\check{a}$ is an incoming arc to $D_2$ and the arc $\hat{a}$ is an outgoing arc from $D_2$. Let $\check{x}_0$ and $\hat{x}_0$ be the elements in the link quandle $Q(L)$ that correspond to the arcs $\check{a}$ and $\hat{a}$ respectively. By the hypothesis of the theorem, the generators $x_0,x_1,\ldots,x_n$ are pairwise distinct in $Q(L_2)$, and thus the elements $\check{x}_0,\hat{x}_0,x_1,x_2,\ldots,x_n$ are pairwise distinct in $Q(L)$ except possibly for the pair $\check{x}_0$ and $\hat{x}_0$.

Suppose on the contrary that the quandle $Q(L)$ is bi-ordered with respect to a linear order $<$. Then we have the smallest and the largest elements in any finite subset of $Q(L)$. Let us consider the following cases:

\begin{enumerate}[(1)]
\item $L_2$ is a positive link: Let $\hat{y}_1$ and $\hat{y}_2$ be the smallest and largest elements, respectively, in the set $\{\hat{x}_0,x_1,x_2,\ldots,x_n\}$. Since $L_2$ is a non-trivial link, we have $n\geq1$, and hence $\hat{y}_1<\hat{y}_2$. For $i=1,2$, consider the crossing $\hat{c}_i$ where the arc corresponding to $\hat{y}_i$ is an outgoing arc (see Figure \ref{fig7}). Note that $\hat{c}_i$ must be a crossing in $D_2$. Let $\hat{u}_i\in\{\check{x}_0,x_1,x_2,\ldots,x_n\}$ and $\hat{v}_i\in\{\hat{x}_0,x_1,x_2,\ldots,x_n\}\cup\{\check{x}_0\}$ be the elements corresponding to the incoming arc and the over arc at $\hat{c}_i$, respectively (see Figure \ref{fig7}). We claim that $\hat{u}_i\neq\hat{v}_i$. Suppose on the contrary that $\hat{u}_i=\hat{v}_i$. Since $\check{x}_0,\hat{x}_0,x_1,x_2,\ldots,x_n$ are pairwise distinct except possibly for the pair $\check{x}_0$ and $\hat{x}_0$, we must have either (a) $\hat{u}_i=\hat{v}_i=x_j$ for some $j$, or (b) $\hat{u}_i=\check{x}_0$ and $\hat{v}_i\in\{\check{x}_0\}\cup\{\hat{x}_0\}$. If $\hat{u}_i=\hat{v}_i=x_j$, then the arc corresponding to $x_j$ is the incoming as well as over arc at $\hat{c}_i$. This contradicts to the fact that $D_2$ is a minimal positive diagram of $L_2$. If $\hat{u}_i=\check{x}_0$ and $\hat{v}_i\in\{\check{x}_0\}\cup\{\hat{x}_0\}$, then the arc $\check{a}$ is the incoming arc at $\hat{c}_i$, and one of the arc among $\check{a}$ and $\hat{a}$ is the over arc at $\hat{c}_i$. In other words, in the diagram $D_2$, the arc $a$ is the incoming as well as over arc at $\hat{c}_i$. This is again a contradiction, and hence $\hat{u}_i\neq\hat{v}_i$. Note that $\hat{y}_i=\hat{u}_i*\hat{v}_i$. By Proposition \ref{prop10}, we have $\hat{u}_i\, \blackdiamond_i\, \hat{y}_i\, \blackdiamond_i\, \hat{v}_i$ for some $\blackdiamond_i\in\{<,>\}$. This implies that $\hat{z}_1<\hat{y}_1$ for some $\hat{z}_1\in\{\hat{u}_1,\hat{v}_1\}$ and $\hat{y}_2<\hat{z}_2$ for some $\hat{z}_2\in\{\hat{u}_2,\hat{v}_2\}$. In other words $\hat{z}_1<\hat{y}_1<\hat{y}_2<\hat{z}_2$ for some $\hat{z}_1,\hat{z}_2\in\{\hat{x}_0,x_1,x_2,\ldots,x_n\}\cup\{\check{x}_0\}$. But, then at least one of the elements $\hat{z}_1$ or $\hat{z}_2$ must belong to $\{\hat{x}_0,x_1,x_2,\ldots,x_n\}$. This contradicts the choice of at least one of $\hat{y}_1$ or $\hat{y}_2$.

\begin{figure}[H]
\centering
\begin{minipage}{0.47\textwidth}
\centering
\begin{tikzpicture}[scale=0.6]
\node at (0.4,-1.2) {{\small $\hat{u}_i$}};
\node at (-1.2,0.4) {{\small $\hat{v}_i$}};
\node at (0.4,1.2) {{\small $\hat{y}_i$}};
\begin{knot}[clip width=6, clip radius=4pt]
\strand[->] (-2,0)--(2,0);
\strand[->] (0,-2)--(0,2);
\end{knot}
\end{tikzpicture}
\caption{At the crossing $\hat{c}_i$}
\label{fig7}
\end{minipage}
\begin{minipage}{0.47\textwidth}
\centering
\begin{tikzpicture}[scale=0.6]
\node at (0.4,1.2) {{\small $\check{u}_i$}};
\node at (-1.2,0.4) {{\small $\check{v}_i$}};
\node at (0.4,-1.2) {{\small $\check{y}_i$}};
\begin{knot}[clip width=6, clip radius=4pt]
\strand[->] (2,0)--(-2,0);
\strand[->] (0,-2)--(0,2);
\end{knot}
\end{tikzpicture}
\caption{At the crossing $\check{c}_i$}
\label{fig8}
\end{minipage}
\end{figure}

\item $L_2$ is a negative link: Let $\check{y}_1$ and $\check{y}_2$ be the smallest and largest elements, respectively, in $\{\check{x}_0,x_1,x_2,\ldots,x_n\}$. For $i=1,2$, consider the crossing $\check{c}_i$ where the arc corresponding to $\check{y}_i$ is an incoming arc (see Figure \ref{fig8}). Note that $\check{c}_i$ must be a crossing in $D_2$. Let $\check{u}_i\in\{\hat{x}_0,x_1,x_2,\ldots,x_n\}$ and $\check{v}_i\in\{\check{x}_0,x_1,x_2,\ldots,x_n\}\cup\{\hat{x}_0\}$ be the elements corresponding to the outgoing arc and the over arc at $\check{c}_i$, respectively (see Figure \ref{fig8}). By the similar argument as in the first case, we have $\check{y}_i\neq\check{v}_i$. Note that $\check{u}_i=\check{y}_i*^{-1}\check{v}_i$. By Proposition \ref{prop10}, we have $\check{u}_i\, \blackdiamond_i\, \check{y}_i\, \blackdiamond_i\, \check{v}_i$ for some $\blackdiamond_i\in\{<,>\}$. Now, arguing as in the first case leads to a contradiction.\qedhere
\end{enumerate}
\end{proof}

For rational numbers $r_1,r_2,\ldots,r_k$, the {\it Montesinos link} $M(r_1,r_2,\ldots,r_k)$ is the link shown in Figure \ref{fig9}, where $t(r_i)$ is the rational tangle \cite{aps,kl} associated with $r_i$ for $i=1,2,\ldots,k$. If $n_1,n_2,\ldots,n_k$ are integers, then the Montesinos link $M(1/n_1,1/n_2,\ldots,1/n_k)$ is called the {\it pretzel link} of type $(n_1,n_2,\ldots,n_k)$. Note that any $2$-bridge link (i.e. a rational link) is a Montesinos link.

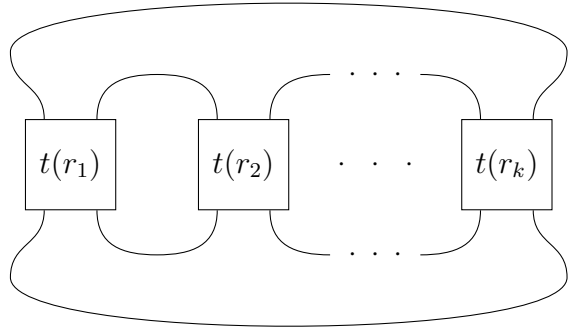
\begin{figure}[H]
\centering
\begin{tikzpicture}
\node (TR1) at (-2.9,0) [draw, minimum width=12mm, minimum height=12mm]{{\large $t(r_1)$}};
\node (TR2) at (-0.6,0) [draw, minimum width=12mm, minimum height=12mm]{{\large $t(r_2)$}};
\node (TRk) at (2.9,0) [draw, minimum width=12mm, minimum height=12mm]{{\large $t(r_k)$}};
\begin{knot}[clip width=6, clip radius=4pt]
\strand[-] (TR1.120) to [out=up, in=down, looseness=1.2] (-3.7,1.5) to [out=up, in=up, looseness=0.3] (3.7,1.5) to [out=down, in=up, looseness=1.2] (TRk.60);
\strand[-] (TR1.240) to [out=down, in=up, looseness=1.2] (-3.7,-1.5) to [out=down, in=down, looseness=0.3] (3.7,-1.5) to [out=up, in=down, looseness=1.2] (TRk.300);
\strand[-] (TR1.60) to [out=up, in=left, looseness=1.2] (-1.75,1.2) to [out=right, in=up, looseness=1.2] (TR2.120);
\strand[-] (TR1.300) to [out=down, in=left, looseness=1.2] (-1.75,-1.2) to [out=right, in=down, looseness=1.2] (TR2.240);
\strand[-] (TR2.60) to [out=up, in=left, looseness=1.2] (0.55,1.2);
\strand[-] (1.75,1.2) to [out=right, in=up, looseness=1.2] (TRk.120);
\strand[-] (TR2.300) to [out=down, in=left, looseness=1.2] (0.55,-1.2);
\strand[-] (1.75,-1.2) to [out=right, in=down, looseness=1.2] (TRk.240);
\end{knot};
\node at (0.7,0) {$\cdot$}; \node at (1.15,0) {$\cdot$}; \node at (1.6,0) {$\cdot$};
\node at (0.85,1.2) {$\cdot$}; \node at (1.15,1.2) {$\cdot$}; \node at (1.45,1.2) {$\cdot$};
\node at (0.85,-1.2) {$\cdot$}; \node at (1.15,-1.2) {$\cdot$}; \node at (1.45,-1.2) {$\cdot$};
\end{tikzpicture}
\caption{Montesinos link $M(r_1,r_2,\ldots,r_k)$}
\label{fig9}
\end{figure}

\begin{corollary}\label{cor5}
Let $M$ be a non-trivial Montesinos link that is prime, alternating and positive (or negative). Then the link quandle of a connected sum of $M$ with any link is not bi-orderable. In particular, the link quandle of $M$ is not bi-orderable.
\end{corollary}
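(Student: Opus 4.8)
The plan is to deduce this from Theorem \ref{th6} by taking $L_2=M$ and letting $L_1$ be an arbitrary link. Since Theorem \ref{th6} already yields, as soon as its hypotheses hold, that $Q(L_2)$ itself is not bi-orderable, the entire statement (including the ``in particular'' clause) follows once we exhibit a minimal positive (respectively negative) diagram $D_2$ of $M$ whose arc generators are pairwise distinct in $Q(M)$. I treat the positive case; the negative case is obtained by mirroring.

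The first task is to produce a minimal positive diagram. Since $M$ is alternating it has a reduced alternating diagram, and by the Tait minimality conjecture (proved by Kauffman, Murasugi and Thistlethwaite) every reduced alternating diagram of a link realizes its crossing number, hence is minimal. It then remains to arrange positivity of such a diagram, and here one uses that $M$ is also positive: a prime positive alternating link is \emph{positively alternating}, i.e.\ admits a reduced alternating diagram all of whose crossings are positive (a theorem of Nakamura); such a diagram is simultaneously reduced alternating, hence minimal, and positive, so it is a minimal positive diagram $D_2$. For a Montesinos link one may instead argue directly from the standard diagram $M(r_1,r_2,\ldots,r_k)$ of Figure \ref{fig9}: primality and positivity force each rational tangle $t(r_i)$ to be realized by an alternating tangle diagram whose crossings are all positive, and the resulting plumbing is a reduced alternating positive diagram.

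The main obstacle is the remaining condition, namely that the arcs of $D_2$ represent pairwise distinct elements of $Q(M)$. I would establish this as a feature of reduced alternating diagrams of \emph{prime} links by means of a quandle colouring: if two distinct arcs were identified in $Q(M)$, the same identification would survive every quandle homomorphism, so it suffices to produce a homomorphism from $Q(M)$ onto a suitable quotient that separates all arcs of $D_2$. A natural candidate is a homomorphism onto a dihedral quandle $\R_p$ arising from a Fox $p$-colouring of $D_2$; for an appropriate prime $p$ one checks, using the explicit continued-fraction and tangle structure of the standard Montesinos diagram together with primality of $M$, that the non-constant $p$-colouring of the reduced alternating diagram $D_2$ assigns pairwise distinct colours to its arcs. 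Granting this, $D_2$ meets all the hypotheses of Theorem \ref{th6}, whence $Q(L_1\#M)$ is not bi-orderable for every link $L_1$, and in particular $Q(M)$ is not bi-orderable. The step I expect to absorb most of the work is precisely this verification that the arcs of the standard reduced alternating Montesinos diagram are separated by such a colouring.
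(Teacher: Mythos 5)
Your reduction to Theorem \ref{th6} is exactly the paper's: take $L_2=M$, note that a reduced alternating diagram is minimal (Tait) and, by Nakamura's theorem that positive alternating links are positively alternating, can be taken positive, so it is a minimal positive diagram; then verify that its arcs give pairwise distinct elements of $Q(M)$. The gap is in that last verification, which you yourself flag as absorbing ``most of the work'' and then leave as ``one checks.'' This is not a routine check: the statement that the arcs of a reduced alternating diagram are separated by colourings is precisely a Kauffman--Harary-type theorem, and for Montesinos links it is a published result of Asaeda, Przytycki and Sikora, which the paper cites. Their theorem says that distinct arcs of the reduced alternating Montesinos diagram represent distinct elements of $H_1(X_M,\mathbb{Z})$, the first homology of the double branched cover; equivalently, any two arcs are distinguished by a colouring valued in $\Core\bigl(H_1(X_M,\mathbb{Z})\bigr)$, and since colourings factor through $Q(M)$ this forces the arc generators to be distinct in $Q(M)$. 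Without invoking (or actually proving) such a theorem, your argument is incomplete.

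Moreover, the specific device you propose --- a single Fox $p$-colouring for a suitable prime $p$, i.e.\ a homomorphism onto a dihedral quandle $\R_p$ --- is not obviously adequate here. The determinant of a Montesinos \emph{link} need not be prime, and $H_1(X_M,\mathbb{Z})$ can be a non-cyclic finite abelian group, so no single $\mathbb{Z}/p$ quotient need separate all arcs; the separation in the Asaeda--Przytycki--Sikora theorem happens at the level of the full group $H_1(X_M,\mathbb{Z})$. (The prime-determinant Fox-colouring argument you have in mind is what the paper uses in the \emph{next} corollary, for alternating positive knots of prime determinant, via Mattman--Solis.) So to close the gap you should replace your proposed $\R_p$-colouring computation by a citation of the Montesinos case of the Kauffman--Harary conjecture in its homological form; with that in hand the rest of your argument goes through and coincides with the paper's proof.
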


\begin{proof}
Consider an alternating diagram $D$ of $M$ without a nugatory crossing (i.e. $D$ is a minimal diagram of $M$). By \cite[Corollary 2]{n2}, the diagram $D$ is positive, and hence it is a minimal positive diagram of $M$. Let $x_0,x_1,\ldots,x_n$ be generators of the link quandle $Q(M)$ corresponding to arcs in $D$. Suppose $H_1(X_{M},\mathbb{Z})$ be the first homology group of the double branched cover $X_{M}$ of $\mathbb{S}^3$ branched along $M$. Then, by \cite[Theorem 4.2]{aps}, different arcs of $D$ represent different elements of $H_1(X_{M},\mathbb{Z})$. This is equivalent to the statement that for any pair of arcs of the diagram $D$, there is a coloring by elements of $\Core(H_1(X_{M},\mathbb{Z}))$ distinguishing them. Hence, the elements $x_0,x_1,\ldots,x_n$ in $Q(M)$ are all distinct. Taking $M$ in place of $L_2$ and $D$ in place of $D_2$, the result now follows from Theorem \ref{th6}.
\end{proof}

As examples, knot quandles of knots $3_1$, $5_1$ and $5_2$ (and of their mirror images) are not bi-orderable, since each of them is a positive (or a negative) alternating rational knot.

\begin{corollary}\label{cor6}
Let $K$ be an alternating and positive (or negative) knot of prime determinant. Then the link quandle of a connected sum of $K$ with any link is not bi-orderable. In particular, the knot quandle of $K$ is not bi-orderable.
\end{corollary}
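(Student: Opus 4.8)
The plan is to deduce this from Theorem \ref{th6} exactly as the preceding corollary on Montesinos links was deduced, so the only point to establish is the existence of a minimal positive (or negative) diagram $D$ of $K$ whose arc-generators are pairwise distinct in $Q(K)$. First I would fix a reduced alternating diagram $D$ of $K$, which exists because $K$ is alternating; by the Kauffman--Murasugi--Thistlethwaite theorem, $D$ is a minimal diagram of $K$. Since $K$ is positive (respectively negative), \cite[Corollary 2]{n2} shows that this reduced alternating diagram is itself positive (respectively negative), and hence $D$ is a minimal positive (negative) diagram of $K$, just as in the Montesinos case.

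The substantive step is to verify that the generators $x_0,\ldots,x_n$ of $Q(K)$ associated with the arcs of $D$ are pairwise distinct. Here I would use that $\det(K)=p$ is prime, so that $H_1(X_K;\mathbb{Z})\cong\mathbb{Z}/p\mathbb{Z}$, where $X_K$ denotes the double branched cover of $S^3$ along $K$; in particular $\Core\!\big(H_1(X_K;\mathbb{Z})\big)\cong\R_p$. Since $p$ divides $\det(K)$, the diagram $D$ admits a nontrivial $\R_p$-colouring, and such a colouring is precisely a quandle homomorphism $Q(K)\to\R_p$ carrying each arc-generator to its colour. By the Kauffman--Harary phenomenon for reduced alternating diagrams of knots of prime determinant $p$ (known for Montesinos knots by \cite[Theorem 4.2]{aps}, and established in general by work of Mattman and Solis), every such nontrivial colouring assigns distinct colours to distinct arcs. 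Consequently this homomorphism is injective on $\{x_0,\ldots,x_n\}$, and these generators are pairwise distinct in $Q(K)$.

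Taking $L_2=K$ and $D_2=D$, all the hypotheses of Theorem \ref{th6} are satisfied, whence the link quandle of a connected sum $L_1\#K$ with any link $L_1$ is not bi-orderable, and in particular $Q(K)$ is not bi-orderable. I expect the arc-distinctness claim to be the only real obstacle: in contrast with the Montesinos corollary, where the first homology of the double branched cover may be a large group, here it has only $p$ elements, so one genuinely needs the Kauffman--Harary result (equivalently, that a reduced alternating knot of prime determinant $p$ has at most $p$ arcs and that its essentially unique nontrivial $\R_p$-colouring separates all of them) rather than a crude pigeonhole-type count.
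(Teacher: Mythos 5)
Your proposal is correct and follows essentially the same route as the paper: take a (reduced alternating, hence minimal) diagram, invoke \cite[Corollary 2]{n2} to see it is a minimal positive (negative) diagram, use the Kauffman--Harary property for knots of prime determinant --- which is exactly the Mattman--Solis result \cite[Proposition 4.4]{ms} the paper cites --- to get pairwise distinct arc generators, and then apply Theorem \ref{th6}. Your added remark that the prime-determinant hypothesis genuinely forces one to use Kauffman--Harary (rather than a homology count as in the Montesinos case) is an accurate reading of why the two corollaries need different citations.
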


\begin{proof}
Consider a minimal diagram $D$ of $K$. By \cite[Corollary 2]{n2}, the diagram $D$ is positive, and hence it is a minimal positive diagram of $K$. Let $x_0,x_1,\ldots,x_n$ be generators of the knot quandle $Q(K)$ corresponding to arcs in $D$. Then, by \cite[Proposition 4.4]{ms}, there exists a Fox coloring that assigns different colors to different arcs of the diagram $D$. Thus, the elements $x_0,x_1,\ldots,x_n$ in $Q(K)$ are also distinct. The result now follows from Theorem \ref{th6}.
\end{proof}

\medskip

\section{Orderability of link quandles of torus links}\label{orderability-torus-quandle}

Recall that two links $L_1$ and $L_2$ are called {\it weakly equivalent} if $L_1$ is ambient isotopic to either $L_2$ or the reverse of the mirror image of $L_2$. It is known that link quandles of weakly equivalent links are isomorphic (see \cite[Theorem 5.2 and Corollary 5.3]{frk}). For any $m,n\geq1$, since the torus link $T(m,n)$ is invertible, it is weakly equivalent to its reverse, mirror image and the reverse of its mirror image, and hence the link quandles of all of them are isomorphic to that of $T(m,n)$. Recall that a torus link $T(m,n)$ is a knot (a one component link) if and only if $\gcd(m,n)=1$.

\begin{proposition}\label{prop5}
The link quandle of a torus link $T(m,n)$ is generated by $a_1,a_2,\ldots,a_m$ and has the following relations:
\begin{equation*}
a_i=a_{n+i}*a_n*a_{n-1}*\cdots*a_1\quad\textrm{for}\;\;i=1,2,\ldots,m,
\end{equation*}
where $a_{mj+k}=a_k$ for $j\in\mathbb{Z}$ and $k\in\{1,2,\ldots,m\}$.
\end{proposition}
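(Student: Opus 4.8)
The plan is to obtain the stated presentation from a braid-closure diagram of $T(m,n)$. Recall that $T(m,n)$ is the closure $\widehat{\beta}$ of the $m$-strand braid $\beta=(\sigma_1\sigma_2\cdots\sigma_{m-1})^n\in B_m$. Labeling the $m$ arcs along the bottom of the standard diagram of $\beta$ by $a_1,\ldots,a_m$ and propagating the quandle coloring upward, the arc meeting the top of $\beta$ at position $i$ receives a label $w_i=w_i(a_1,\ldots,a_m)$, where $(w_1,\ldots,w_m)=\beta\cdot(a_1,\ldots,a_m)$ for the natural action of $B_m$ on the free quandle on $m$ generators. After fixing the sign convention of Figure \ref{fig1} (and, if necessary, replacing $T(m,n)$ by its mirror image, which is harmless since the two are weakly equivalent and hence have isomorphic link quandles), this action is given on a tuple by $\sigma_i\colon(\ldots,u,v,\ldots)\mapsto(\ldots,v*u,u,\ldots)$ at positions $i,i+1$; that this is a genuine action of $B_m$ is exactly the distributivity axiom \textbf{Q3}. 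Closing up the braid identifies, for each $i$, the bottom arc $a_i$ with the top arc $w_i$, so that the crossing relations of the diagram reduce to the presentation $Q(T(m,n))=Q\langle a_1,\ldots,a_m\mid a_i=w_i,\ i=1,\ldots,m\rangle$.

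Composing the $\sigma_i$ one checks directly that the single ``fan'' $\delta:=\sigma_1\sigma_2\cdots\sigma_{m-1}$ acts by
$$\delta\cdot(a_1,a_2,\ldots,a_m)=(a_2*a_1,\ a_3*a_1,\ \ldots,\ a_m*a_1,\ a_1).$$
The heart of the argument is an induction on $t\ge1$ showing that, with all subscripts read modulo $m$ in $\{1,\ldots,m\}$ (this is precisely the convention $a_{mj+k}=a_k$) and all products left-associated,
$$(\delta^{t}\cdot\mathbf{a})_i=a_{i+t}*a_t*a_{t-1}*\cdots*a_1\qquad(1\le i\le m).$$
The base case $t=1$ is the displayed formula for $\delta$, using idempotency $a_m*a_m=a_m$ to absorb the term $a_{m+1}=a_1$ in the last coordinate. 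For the inductive step one applies $\delta$ to $\delta^{t-1}\cdot\mathbf{a}$: for $i<m$ the $i$-th coordinate of $\delta^{t}\cdot\mathbf{a}$ is $(\delta^{t-1}\cdot\mathbf{a})_{i+1}*(\delta^{t-1}\cdot\mathbf{a})_1$, and the key point is that the inductive hypothesis exhibits both factors as the image of a single generator under the common inner automorphism $\Phi=S_{a_1}\circ S_{a_2}\circ\cdots\circ S_{a_{t-1}}$, namely $(\delta^{t-1}\cdot\mathbf{a})_{i+1}=\Phi(a_{i+t})$ and $(\delta^{t-1}\cdot\mathbf{a})_1=\Phi(a_t)$; hence the product equals $\Phi(a_{i+t})*\Phi(a_t)=\Phi(a_{i+t}*a_t)=a_{i+t}*a_t*a_{t-1}*\cdots*a_1$, as wanted. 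The last coordinate of $\delta^{t}\cdot\mathbf{a}$ is $(\delta^{t-1}\cdot\mathbf{a})_1=a_t*a_{t-1}*\cdots*a_1$, which matches the claimed value $a_{m+t}*a_t*\cdots*a_1$ because $a_{m+t}=a_t$ and $a_t*a_t=a_t$.

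Taking $t=n$ and imposing the closure relations $a_i=(\delta^{n}\cdot\mathbf{a})_i$ then gives exactly $a_i=a_{n+i}*a_n*a_{n-1}*\cdots*a_1$ for $i=1,\ldots,m$, which is the assertion. The main obstacle is the inductive computation: one must keep the cyclic index arithmetic modulo $m$ under tight control---especially the collapse occurring in the last coordinate---and notice the common-automorphism structure that lets the two conjugated subwords amalgamate into a single left-associated product of length $t+1$ rather than blowing up. Checking that the braid action is well defined and that the closure contributes no relations beyond the $m$ listed ones is routine.
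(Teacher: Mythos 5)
Your proof is correct and follows essentially the same route as the paper: both present $T(m,n)$ as the closure of $(\sigma_1\cdots\sigma_{m-1})^n$, read the relations off the closure, and induct on the number of fans $\sigma_1\cdots\sigma_{m-1}$ applied, with your formula $(\delta^{t}\cdot\mathbf{a})_i=a_{i+t}*a_t*\cdots*a_1$ being exactly the paper's inductive claim for the top labels of $\tau(m,t)$. The only cosmetic difference is in the key simplification of $(x*w)*(y*w)$ to $(x*y)*w$: you phrase it via the common inner automorphism $S_{a_1}\circ\cdots\circ S_{a_{t-1}}$, whereas the paper invokes its left-association Lemma \ref{lem4} followed by right cancellation — these are the same fact.
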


\begin{proof}
Since a torus link $T(m,n)$ is the closure of the braid $\tau(m,n)=\left(\sigma_1\sigma_2\cdots\sigma_{m-1}\right)^n$, with reference to Figure \ref{fig2}, it is enough to prove that
\begin{equation}\label{eq5}
c_i=a_{n+i}*a_n*a_{n-1}*\cdots*a_1\quad\textrm{for}\;\;i=1,2,\ldots,m.
\end{equation}

\begin{figure}[H]
\centering
\begin{minipage}{0.495\textwidth}
\centering
\begin{tikzpicture}[scale=0.5]
\node (a1) at (-3.5,-7.6) {$a_1$};
\node (a2) at (-2,-7.6) {$a_2$};
\node (a3) at (-0.5,-7.6) {$a_3$};
\node at (0.7,-7.6) {\large{$\cdot$}};
\node at (1.4,-7.6) {\large{$\cdot$}};
\node at (2.1,-7.6) {\large{$\cdot$}};
\node (am) at (3.5,-7.6) {$a_m$};
\node[rect1] at (0,-4.9) {$\tau(m,n)$};
\node (c1) at (-3.5,-2) {$c_1$};
\node (c2) at (-2,-2) {$c_2$};
\node (c3) at (-0.5,-2) {$c_3$};
\node at (0.7,-2) {\large{$\cdot$}};
\node at (1.4,-2) {\large{$\cdot$}};
\node at (2.1,-2) {\large{$\cdot$}};
\node (cm) at (3.5,-2) {$c_m$};
\draw (a1)--(-3.5,-6.6) (a2)--(-2,-6.6) (a3)--(-0.5,-6.6) (am)--(3.5,-6.6);
\draw[arrow, rounded corners=7pt] (-3.5,-3.2)--(c1);
\draw[arrow, rounded corners=7pt] (-2,-3.2)--(c2);
\draw[arrow, rounded corners=7pt] (-0.5,-3.2)--(c3);
\draw[arrow, rounded corners=7pt] (3.5,-3.2)--(cm);
\end{tikzpicture}
\caption{Toric braid $\tau(m,n)$}
\label{fig2}
\end{minipage}
\begin{minipage}{0.495\textwidth}
\centering
\begin{tikzpicture}[scale=0.7]
\node (a1) at (-3.5,-2) {$a_1$};
\node (a2) at (-2,-2) {$a_2$};
\node (a3) at (-0.5,-2) {$a_3$};
\node at (0.7,-2) {\large{$\cdot$}};
\node at (1.4,-2) {\large{$\cdot$}};
\node at (2.1,-2) {\large{$\cdot$}};
\node (am) at (3.5,-2) {$a_m$};
\node (c1) at (-3.5,2) {$c_1$};
\node (c2) at (-2,2) {$c_2$};
\node at (-0.8,2) {\large{$\cdot$}};
\node at (-0.1,2) {\large{$\cdot$}};
\node at (0.6,2) {\large{$\cdot$}};
\node (cmo) at (2,2) {$c_{m-1}$};
\node (cm) at (3.5,2) {$c_m$};
\draw[arrow, rounded corners=7pt] (a2)--(-2,-1.05)--(-3.5,-0.6)--(c1);
\draw[arrow, rounded corners=7pt] (a3)--(-0.5,-0.6)--(-2,-0.15)--(c2);
\draw[arrow, rounded corners=7pt] (am)--(3.5,0.6)--(2,1.05)--(cmo);
\draw[arrow, rounded corners=7pt, preaction={draw=white,line width=5pt}] (a1)--(-3.5,-1.05)--(3.5,1.05)--(cm);
\end{tikzpicture}
\caption{Toric braid $\tau(m,1)$}
\label{fig3}
\end{minipage}
\end{figure}

\noindent We prove \eqref{eq5} by induction on $n$. By looking at Figure \ref{fig3}, one can see that $c_i=a_{i+1}*a_1$ for $i=1,2,\ldots, m$. Thus, the equations given by \eqref{eq5} hold for $n=1$. Assume that the equations given by \eqref{eq5} hold for a positive integer $n-1$. Since $\tau(m,n)=\tau(m,n-1)\tau(m,1)$ (see Figure \ref{fig4}), we have 
\begin{equation}\label{eq6}
c_i=b_{i+1}*b_1 \quad\textrm{for}\;\;i=1,2,\ldots,m
\end{equation}
where $b_{m+1}=b_1$. By induction hypothesis,
\begin{equation}\label{eq7}
b_{i+1}=a_{n+i}*a_{n-1}*a_{n-2}*\cdots*a_1\quad\textrm{for}\;\;i=1,2,\ldots,m.
\end{equation}

\begin{figure}[H]
\centering
\begin{tikzpicture}[scale=0.7]
\node (a1) at (-3.5,-7.6) {$a_1$};
\node (a2) at (-2,-7.6) {$a_2$};
\node (a3) at (-0.5,-7.6) {$a_3$};
\node at (0.7,-7.6) {\large{$\cdot$}};
\node at (1.4,-7.6) {\large{$\cdot$}};
\node at (2.1,-7.6) {\large{$\cdot$}};
\node (am) at (3.5,-7.6) {$a_m$};
\node[rect2] at (0,-4.8) {$\tau(m,n-1)$};
\node (b1) at (-3.5,-2) {$b_1$};
\node (b2) at (-2,-2) {$b_2$};
\node (b3) at (-0.5,-2) {$b_3$};
\node at (0.7,-2) {\large{$\cdot$}};
\node at (1.4,-2) {\large{$\cdot$}};
\node at (2.1,-2) {\large{$\cdot$}};
\node (bm) at (3.5,-2) {$b_m$};
\node (c1) at (-3.5,2) {$c_1$};
\node (c2) at (-2,2) {$c_2$};
\node at (-0.8,2) {\large{$\cdot$}};
\node at (-0.1,2) {\large{$\cdot$}};
\node at (0.6,2) {\large{$\cdot$}};
\node (cmo) at (2,2) {$c_{m-1}$};
\node (cm) at (3.5,2) {$c_m$};
\draw (a1)--(-3.5,-6.8) (-3.5,-2.8)--(b1) (a2)--(-2,-6.8) (-2,-2.8)--(b2) (a3)--(-0.5,-6.8) (-0.5,-2.8)--(b3) (am)--(3.5,-6.8) (3.5,-2.8)--(bm);
\draw[arrow, rounded corners=7pt] (b2)--(-2,-1.05)--(-3.5,-0.6)--(c1);
\draw[arrow, rounded corners=7pt] (b3)--(-0.5,-0.6)--(-2,-0.15)--(c2);
\draw[arrow, rounded corners=7pt] (bm)--(3.5,0.6)--(2,1.05)--(cmo);
\draw[arrow, rounded corners=7pt, preaction={draw=white,line width=5pt}] (b1)--(-3.5,-1.05)--(3.5,1.05)--(cm);
\end{tikzpicture}
\caption{Toric braid $\tau(m,n)$} seen as $\tau(m,n-1)\tau(m,1)$
\label{fig4}
\end{figure}

\noindent Using \eqref{eq7} in \eqref{eq6}, we get
\begin{align}
c_i&=\left(a_{n+i}*a_{n-1}*a_{n-2}*\cdots*a_1\right)*\left(a_n*a_{n-1}*a_{n-2}*\cdots*a_1\right)\\
&=a_{n+i}*a_{n-1}*a_{n-2}*\cdots*a_1*^{-1}a_1*^{-1}a_2*^{-1}\cdots*^{-1}a_{n-1}*a_n*a_{n-1}*\cdots*a_1\\
&=a_{n+i}*a_n*a_{n-1}*\cdots*a_1\quad\;\textrm{for}\;\;i=1,2,\ldots,m
\end{align}
where the second equality follows from Lemma \ref{lem4} and the third follows by the cancellation. This proves that the equations given by \eqref{eq5} hold for all $n$.
\end{proof}

If $<$ is a right ordering on a quandle $Q$ and $x,y,z_1,z_2,\ldots,z_n\in Q$ with $x\,\blackdiamond\, y$ for $\blackdiamond\in\{<,>\}$, then
\begin{equation}\label{eq14}
x*z_1*z_2*\cdots*z_n\,\blackdiamond\, y*z_1*z_2*\cdots*z_n~\textrm{and}~
x*^{-1}z_1*^{-1}z_2*^{-1}\cdots*^{-1}z_n\,\blackdiamond\, y*^{-1}z_1*^{-1}z_2*^{-1}\cdots*^{-1}z_n.
\end{equation}

\begin{theorem}\label{th4}
Let $m,n\geq2$ be integers such that one is not a multiple of the other. Then the link quandle of the torus link $T(m,n)$ is not right-orderable.
\end{theorem}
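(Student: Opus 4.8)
The plan is to argue by contradiction, using Proposition~\ref{prop5} together with \eqref{eq14}. Suppose $Q:=Q(T(m,n))$ admits a right-order $<$. By Proposition~\ref{prop5}, $Q$ is generated by $a_1,\dots,a_m$ subject to $a_i=a_{i+n}*a_n*a_{n-1}*\cdots*a_1$ for $i=1,\dots,m$ (indices modulo $m$), the right-hand side being left-associated. Set $\Phi\colon Q\to Q$, $\Phi(x)=x*a_n*a_{n-1}*\cdots*a_1$. Then $\Phi=S_{a_1}\circ S_{a_2}\circ\cdots\circ S_{a_n}$ is a composite of symmetries, hence an automorphism of $Q$; and by \eqref{eq14} (with $\blackdiamond$ equal to $<$), $\Phi$ is strictly order-preserving. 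The defining relations say precisely that $\Phi(a_{i+n})=a_i$ for all $i$.

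Next I would observe that $\Phi$ maps the finite set $S:=\{a_1,\dots,a_m\}\subseteq Q$ onto itself: since $\{a_{i+n}\mid i\in\mathbb{Z}/m\}=S=\{a_i\mid i\in\mathbb{Z}/m\}$, the relations give $\Phi(S)=S$, and injectivity of $\Phi$ makes $\Phi|_S$ a bijection of the finite linearly ordered set $(S,<)$. An order-preserving self-bijection of a finite totally ordered set is the identity (it fixes the least element, and one induces down), so $\Phi|_S=\mathrm{id}_S$; hence $a_{i+n}=a_i$ for all $i$, i.e.\ $a_j$ depends only on the residue of $j$ modulo $d:=\gcd(m,n)$. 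Since $m\nmid n$ we have $d<m$, so $Q$ is generated by the $d<m$ elements $a_1,\dots,a_d$ (and, using $T(m,n)=T(n,m)$ and $n\nmid m$, one also gets the same collapse in the $n$-generator presentation).

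The remaining task --- ruling out this collapse --- is where the real content lies. The reduction has recast the problem as: the meridian generators of $Q(T(m,n))$ do not all coincide within their residue classes modulo $\gcd(m,n)$, i.e.\ $Q(T(m,n))$ is not generated by $\gcd(m,n)$ meridians. When $\gcd(m,n)=1$ this is immediate: the collapse would make $Q(T(m,n))$ a one-element quandle, impossible since $T(m,n)$ is then a non-trivial knot. In general I would deduce it by passing to the enveloping group $\Env(Q)=G(T(m,n))$: the collapse forces $\tilde a_i=\tilde a_{i+n}$ in $G(T(m,n))$ for all $i$, so the element $z:=\tilde a_n\tilde a_{n-1}\cdots\tilde a_1$ commutes with every meridian generator and is central in $G(T(m,n))$; this contradicts the Seifert-fibred structure of the torus link complement, whose fundamental group has infinite cyclic centre with $z^{m/\gcd(m,n)}$ (not $z$ itself) a generator of it when neither of $m,n$ divides the other. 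A cleaner self-contained alternative is to invoke that the minimal number of meridian generators of $Q(T(m,n))$ equals the bridge number $\min(m,n)$, which strictly exceeds $\gcd(m,n)$ exactly under the hypothesis. Making this last non-generation statement rigorous is the main obstacle; the reduction carried out in the first two paragraphs is the key step, and it is what uses \eqref{eq14} essentially.
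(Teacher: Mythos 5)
Your reduction in the first two paragraphs is correct and is, in substance, the paper's own argument in different packaging: the paper takes $d=\gcd(m,n)$, notes $a_1\neq a_{d+1}$, and uses right-invariance (your $\Phi$ and its inverse, via \eqref{eq14}) to propagate the strict inequality around the cycle $a_1\,\blackdiamond\,a_{d+1}\,\blackdiamond\,a_{2d+1}\,\blackdiamond\cdots\blackdiamond\,a_1$, which is precisely your observation that a strictly order-preserving bijection of the finite set $\{a_1,\dots,a_m\}$ must be the identity. That part is sound (and arguably cleaner than the explicit chain).

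The genuine gap is the step you yourself flag: you must know that the collapse $a_{i+d}=a_i$ does not actually hold in $Q(T(m,n))$, i.e.\ that $a_1\neq a_{d+1}$. The paper closes exactly this hole by citing Rost--Zieschang \cite[Corollary 1.5]{rz}, which gives that the meridional elements $\eta(a_1),\dots,\eta(a_m)$ are pairwise distinct in $G(T(m,n))$, whence $a_1,\dots,a_m$ are pairwise distinct in the quandle. Your alternative (c) --- that the minimal number of meridional generators is the bridge number $\min(m,n)>d$ --- is essentially this same theorem of Rost--Zieschang, so it is the right fact to invoke, but as written it remains an unproved assertion; and your route (b) via centrality of $z=\tilde a_n\cdots\tilde a_1$ would additionally require identifying the centre of the torus \emph{link} group (not just the knot case) and proving $z$ itself is not central, which you assert but do not establish. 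So the proof is complete modulo one citation, and you have correctly isolated which one; but without it the argument does not close.
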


\begin{proof}
Note that the torus links $T(m,n)$ and $T(n,m)$ are ambient isotopic. Thus, we can assume that $m<n$ by switching $m$ and $n$ if required. Let $d=\gcd(m,n)$. Then $d<m$. By Proposition \ref{prop5}, the link quandle $Q(T(m,n))$ is generated by $a_1,a_2,\ldots,a_m$ and has the relations
\begin{equation}\label{eq10}
a_i=a_{n+i}*a_n*a_{n-1}*\cdots*a_1\quad\textrm{for}\;\;i=1,2,\ldots,m,
\end{equation}
where $a_{mj+k}=a_k$ for $j\in\mathbb{Z}$ and $k\in\{1,2,\ldots,m\}$. Using \eqref{eq10}, one obtains
\begin{equation}\label{eq11}
a_i=a_{n+i}*a_n*a_{n-1}*\cdots*a_1\quad\textrm{for all}\;\;i\in\mathbb{Z},
\end{equation}
where $a_{mj+k}=a_k$ for $j\in\mathbb{Z}$ and $k\in\{1,2,\ldots,m\}$. We can rewrite \eqref{eq11} as
\begin{equation}\label{eq12}
a_{i-n}=a_i*a_n*a_{n-1}*\cdots*a_1\quad\textrm{for all}\;\;i\in\mathbb{Z}.
\end{equation}
\noindent Also, \eqref{eq11} can be written as 
\begin{equation}\label{eq13}
a_{n+i}=a_i*^{-1}a_1*^{-1}a_2*^{-1}\cdots*^{-1}a_n\quad\textrm{for all}\;\;i\in\mathbb{Z}.
\end{equation}
	
Suppose on the contrary that the quandle $Q(T(m,n))$ is right-ordered with respect to a linear order $<$. By the proof of Proposition \ref{prop5} (see figures \ref{fig2}, \ref{fig3} and \ref{fig4}), the generators $a_1,a_2,\ldots,a_m$ of $Q(T(m,n))$ correspond to some of the arcs in the standard diagram of the closed toric braid representing $T(m,n)$. Note that $\eta(a_1),\eta(a_2),\ldots,\eta(a_m)$ are the meridional elements that generate the link group $G(T(m,n))$, where $\eta:Q(T(m,n))\to G(T(m,n))$ is the natural map. According to \cite[Corollary 1.5]{rz}, the elements $\eta(a_1),\eta(a_2),\ldots,\eta(a_m)$ must be pairwise distinct in $G(T(m,n))$, and hence so are the elements $a_1,a_2,\ldots,a_m$ in $Q(T(m,n))$. In particular, we have $a_1\neq a_{d+1}$, and hence $a_1\,\blackdiamond\, a_{d+1}$ for some $\blackdiamond\in\{<,>\}$. A repeated application of \eqref{eq14} together with \eqref{eq12} and \eqref{eq13} yields
\begin{equation}\label{eq15}
a_{nk+1}\,\blackdiamond\, a_{nk+d+1}\quad\textrm{for all}\;\;k\in\mathbb{Z}.
\end{equation}
Let $l$ be an integer. Since $\gcd(m,n)=d$, we have $dl=mj+nk$ for some integers $j$ and $k$. This implies that $nk+1\equiv dl+1\;(\!\!\!\!\mod m)$ and $nk+d+1\equiv dl+d+1\;(\!\!\!\!\mod m)$. By \eqref{eq15}, we have $a_{dl+1}\,\blackdiamond\, a_{dl+d+1}$. Thus, $a_{dl+1}\,\blackdiamond\, a_{dl+d+1}$ for any integer $l$. Using this repeatedly, we get $a_1\,\blackdiamond\, a_{d+1}\,\blackdiamond\, a_{2d+1}\,\blackdiamond\,\cdots\,\blackdiamond\, a_{cd+1}\,\blackdiamond\, a_1$, where $c=\frac{m}{d}-1$. This implies that $a_1<a_1$ or $a_1>a_1$, a contradiction.
\end{proof}

As a consequence of the preceding theorem, we retrieve the following result of Perron and Rolfsen \cite[Proposition 3.2]{pr}.

\begin{corollary}\label{cor3}
The knot group of a non-trivial torus knot is not bi-orderable.
\end{corollary}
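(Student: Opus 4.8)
The plan is to obtain this immediately from Theorem \ref{th4} and Proposition \ref{prop4}. A non-trivial torus knot is of the form $T(m,n)$ with $\gcd(m,n)=1$; the torus link $T(m,n)$ is a knot exactly when $\gcd(m,n)=1$, and such a knot is non-trivial exactly when in addition $m,n\geq 2$. So I would start from $m,n\geq 2$ with $\gcd(m,n)=1$, and observe that coprimality of $m$ and $n$ (both at least $2$) forces neither of them to be a multiple of the other. Hence the hypotheses of Theorem \ref{th4} are satisfied, and that theorem tells us that the knot quandle $Q(T(m,n))$ is \emph{not} right-orderable.

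Next I would invoke primeness of torus knots: since every non-trivial torus knot is prime, \cite[Corollary 3.6]{r} gives that the natural map $\eta\colon Q(T(m,n))\to\Conj\bigl(G(T(m,n))\bigr)$ is injective. This is precisely the input already used in Corollary \ref{cor1}, and it is what makes Proposition \ref{prop4} applicable. Now suppose, towards a contradiction, that the knot group $G(T(m,n))=\Env\bigl(Q(T(m,n))\bigr)$ is bi-orderable. Then Proposition \ref{prop4} forces $Q(T(m,n))$ to be right-orderable, contradicting the conclusion of the previous paragraph. Therefore $G(T(m,n))$ is not bi-orderable.

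There is essentially no obstacle here; the statement is a corollary in the literal sense. The only steps needing a sentence of justification are (i) that every non-trivial torus knot meets the numerical hypothesis of Theorem \ref{th4}, i.e. $\gcd(m,n)=1$ with $m,n\geq 2$ implies that neither is a multiple of the other, and (ii) that primeness of torus knots yields injectivity of $\eta$, so Proposition \ref{prop4} can be used. Both are already recorded facts in the paper, so the write-up is short.
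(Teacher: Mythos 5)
Your argument is correct and is essentially the paper's own: the paper deduces this exact statement by combining Theorem \ref{th4} with Corollary \ref{cor1}, and Corollary \ref{cor1} is nothing but Proposition \ref{prop4} specialized to prime knots via the injectivity of $\eta$ from \cite[Corollary 3.6]{r}, which is precisely the chain you spell out. Your extra sentence checking that coprimality with $m,n\geq 2$ rules out one being a multiple of the other is a harmless (and slightly more careful) addition.
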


\begin{proof}
Let $K$ be a non-trivial torus knot. Then, by Theorem \ref{th4}, the knot quandle of $K$ is not right-orderable, and hence by Corollary \ref{cor1}, the knot group of $K$ is not bi-orderable.
\end{proof}

We conclude with the following result.

\begin{corollary}
The knot quandle of the trefoil knot is neither left nor right-orderable.
\end{corollary}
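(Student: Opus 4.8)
For the final corollary --- that the knot quandle of the trefoil is neither left nor right orderable --- the plan is to prove the two failures separately, each reducing to something already in the excerpt. Right-orderability is immediate: the trefoil is the torus knot $T(2,3)$, and since $2,3\ge 2$ with neither a multiple of the other, Theorem \ref{th4} says $Q(3_1)$ is not right-orderable. So the whole job is to rule out left-orderability.

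For that I would argue by contradiction. Suppose $<$ is a linear order on $Q(3_1)$ under which every left multiplication preserves order. Then the third coordinate of the type of $<$ is $<$, so Theorem \ref{th2}(4) forces the type of $<$ to be $(<,<,<,>)$ or $(>,>,<,<)$; but the former has first coordinate $<$, so right multiplications preserve order too, which would make $Q(3_1)$ right-orderable --- contradicting what we just said. Hence $<$ has type $(>,>,<,<)$. Next I would extract one relation among the generators: Proposition \ref{prop5} with $m=2$, $n=3$ (so that $a_3=a_1$ and $a_4=a_2$) presents $Q(3_1)$ on $a_1,a_2$ with defining relations $a_1=a_2*a_1*a_2*a_1$ and $a_2=a_1*a_2*a_1$, and right-cancelling $a_1$ in the second relation yields $a_1*a_2=a_2*^{-1}a_1$. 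Also $a_1\ne a_2$ in $Q(3_1)$, exactly as established in the proof of Theorem \ref{th4} (here $d=\gcd(2,3)=1$), so either $a_1<a_2$ or $a_2<a_1$.

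To finish I would feed $a_1*a_2=a_2*^{-1}a_1$ into the type $(>,>,<,<)$ inequalities. Take the case $a_1<a_2$ (the case $a_2<a_1$ being symmetric). Since coordinate (1) of the type is $>$, taking $z=a_2$ gives $a_1*a_2>a_2*a_2=a_2$; since coordinate (2) is $>$, taking $z=a_1$ gives $a_2*^{-1}a_1<a_1*^{-1}a_1=a_1$. Therefore $a_2*^{-1}a_1<a_1<a_2<a_1*a_2$, contradicting the equality $a_1*a_2=a_2*^{-1}a_1$. This contradiction shows $Q(3_1)$ is not left-orderable, and with the first paragraph the corollary follows.

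The computations are routine; I expect the only delicate point to be the bookkeeping of order types --- using the already-proved failure of right-orderability to cut the possibilities down to the single type $(>,>,<,<)$, and then correctly reading off that in this type \emph{both} right-hand operations reverse the order, which is precisely what turns the innocuous-looking relation $a_1*a_2=a_2*^{-1}a_1$ into a contradiction. One could alternatively close off the type $(<,<,<,>)$ branch by invoking the corollary that positive alternating knots of prime determinant (the trefoil has determinant $3$) have non-bi-orderable knot quandles; but channelling everything through Theorem \ref{th4} keeps the argument self-contained.
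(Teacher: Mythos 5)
Your right-orderability half is exactly the paper's argument (the trefoil is $T(2,3)$, apply Theorem \ref{th4}), and your algebra is sound: Proposition \ref{prop5} does give the relations $a_1=a_2*a_1*a_2$ and $a_2=a_1*a_2*a_1$, the latter right-cancels to $a_1*a_2=a_2*^{-1}a_1$, and $a_1\neq a_2$. The gap is in the very first move of the left-orderability half. You assume that a left-ordering of $Q(T(2,3))$ \emph{has a type} and then invoke Theorem \ref{th2}(4) to pin it down to $(>,>,<,<)$. But in the paper's definition, being of type $(\blackdiamond_1,\blackdiamond_2,\blackdiamond_3,\blackdiamond_4)$ means all four comparisons hold \emph{uniformly} over every triple $x<y$, $z$, and Theorem \ref{th2} is stated only under the hypothesis that the order possesses such a full type. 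A left-ordering guarantees uniformity of the third coordinate and nothing else: there is no a priori reason that $x*z$ versus $y*z$ falls the same way for all choices of $x<y$ and $z$. (This really can fail for left-orders: the lexicographic left-order on $\Alex(\mathbb{R},\phi_{-1})\times\Alex(\mathbb{R},\phi_{1/2})$ coming from Proposition \ref{prop1} has $x\star z$ below $y\star z$ for some pairs $x\prec y$ and above it for others, since the factors have types $(>,>,<,<)$ and $(<,<,<,>)$ respectively.) Consequently the two inequalities you extract, $a_1*a_2>a_2*a_2$ and $a_2*^{-1}a_1<a_1*^{-1}a_1$, rest on $\blackdiamond_1=\blackdiamond_2=\,>$ holding uniformly, which you have not established, and the contradiction with $a_1*a_2=a_2*^{-1}a_1$ does not go through as written.

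The paper avoids this entirely by staying inside the left-multiplication hypothesis: from $a_1\,\blackdiamond\,a_2$ it deduces $a_1=a_1*a_1\,\blackdiamond\,a_1*a_2$, then left-multiplies by $a_2$ and uses Lemma \ref{lem4} and the relation $a_1=a_2*a_1*a_2$ to get $a_2*a_1\,\blackdiamond\,a_1$, then left-multiplies by $a_1$ and uses the relation $a_2=a_1*a_2*a_1$ to get $a_2\,\blackdiamond\,a_1$, a contradiction. If you want to salvage your route, you must either prove that a left-order on this particular quandle is forced to have a uniform full type (which you have not attempted), or replace the type argument with a chain of this kind that uses only $\blackdiamond_3=\,<$.
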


\begin{proof}
Note that the trefoil knot is the torus knot $T(2,3)$. By Theorem \ref{th4}, the knot quandle of $T(2,3)$ is not right-orderable. We claim that the knot quandle of $T(2,3)$ is not left-orderable as well. Using Proposition \ref{prop5}, the knot quandle $Q\left(T(2,3)\right)$ is generated by $a_1$ and $a_2$ with defining relations $a_1=a_2*a_1*a_2*a_1$ and $a_2=a_1*a_1*a_2*a_1$. These relations can be rewritten as
\begin{align}
a_1&=a_2*a_1*a_2\quad\;\textrm{and}\label{eq8}\\
a_2&=a_1*a_2*a_1.\label{eq9}
\end{align}
Assume contrary that the quandle $Q\left(T(2,3)\right)$ is left-ordered with respect to a linear order $<$. Since $Q\left(T(2,3)\right)$ is non-trivial, we must have $a_1\neq a_2$, and hence $a_1\,\blackdiamond\, a_2$ for some $\blackdiamond\in\{<,>\}$. Consider
\begin{align}
&\quad a_1\,\blackdiamond\, a_2&&\\
\Rightarrow&\quad a_1*a_1\,\blackdiamond\, a_1*a_2&&\textrm{(since $<$ is left ordering)}\\
\Rightarrow&\quad a_1\,\blackdiamond\, a_1*a_2&&\textrm{(by idempotency)}\\
\Rightarrow&\quad a_2*a_1\,\blackdiamond\, a_2*(a_1*a_2)&&\textrm{(since $<$ is left ordering)}\\
\Rightarrow&\quad a_2*a_1\,\blackdiamond\, a_2*a_1*a_2&&\textrm{(by Lemma \ref{lem4})}\\
\Rightarrow&\quad a_2*a_1\,\blackdiamond\, a_1&&\textrm{(by \eqref{eq8})}\\
\Rightarrow&\quad a_1*(a_2*a_1)\,\blackdiamond\, a_1*a_1&&\textrm{(since $<$ is left ordering)}\\
\Rightarrow&\quad a_1*a_2*a_1\,\blackdiamond\, a_1&&\textrm{(by Lemma \ref{lem4})}\\
\Rightarrow&\quad a_2\,\blackdiamond\, a_1&&\textrm{(by \eqref{eq9})}.
\end{align}
This is a contradiction as we cannot have $a_1\,\blackdiamond\, a_2$ and $a_2\,\blackdiamond\, a_1$ together.
\end{proof}

\medskip

\section{Orderability of involutory quandles of alternating links}\label{involutory-quandles-links}
We know that a non-trivial involutory quandle is not right-orderable, whereas there are many involutory quandles that are left-orderable. For example, the quandle $\Core(G)$ is left-orderable for any bi-orderable group $G$. We conclude with some observations on left-orderability of involutory quandles of alternating links.

\begin{theorem}\label{thm-left-order}
Let $L$ be a non-trivial alternating link. If there exists a reduced alternating diagram $D$ of $L$ such that generators of the involutory quandle $IQ(L)$ of $L$ corresponding to arcs in $D$ are pairwise distinct, then $IQ(L)$ is not left-orderable.
\end{theorem}

\begin{proof}
First suppose that $L$ is a non-trivial non-split alternating link. Let $D$ be a reduced alternating diagram of $L$ such that generators of the involutory quandle $IQ(L)$ of $L$ corresponding to arcs in $D$ are pairwise distinct.	Suppose on the contrary that the involutory quandle $IQ(L)$ is left-ordered with respect to a linear order $<$. Let $y$ be the smallest element among the generators of $IQ(L)$ corresponding to arcs in $D$. Since $L$ is non-trivial and non-split, and $D$ is alternating, there is a crossing, say $c$, in $D$ such that the arc corresponding to $y$ is the over arc at $c$ (see Figure \ref{fig10}). Let $x$ and $z$ be the elements of $IQ(L)$ corresponding to the other two arcs meeting at $c$ (see Figure \ref{fig10}).
\begin{figure}[H]
\centering
\begin{tikzpicture}[scale=0.6]
\node at (-1.2,0.4) {{\small$x$}};
\node at (0.4,-1.2) {{\small$y$}};
\node at (1.2,0.4) {{\small$z$}};
\begin{knot}[clip width=6, clip radius=4pt]
\strand[-] (0,-1.8)--(0,1.8);
\strand[-] (-2,0)--(2,0);
\end{knot}
\end{tikzpicture}
\caption{At the crossing $c$}
\label{fig10}
\end{figure}
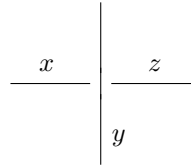
By the hypothesis, we have $x\neq y$ and $y\neq z$. Now, $y$ being the smallest element implies that $y< x $ and $y <z$. Since $<$ is a left-ordering on $IQ(L)$ this implies that $x*y < x*x=x$ and $z*y < z*z=z$. Since the quandle is involutory, we have $x*y=z$ and $z*y=x$, and hence $z<x$ and $x<z$, a contradiction.
\par

Now suppose that $L$ is an arbitrary non-trivial alternating link. Then there exists a non-trivial non-split component, say $L'$, of $L$. Let $D'$ be the diagram of $L'$ obtained from $D$ by throwing away the components of $L$ that do not belong to $L'$. Note that the involutory quandle $IQ(L')$ of $L'$ is a subquandle of $IQ(L)$. By the preceding paragraph, it follows that $IQ(L')$ is not left-orderable, and hence so is $IQ(L)$.
\end{proof}

\begin{corollary}
Let $M$ be a non-trivial Montesinos link that is prime and alternating. Then the involutory quandle $IQ(M)$ of $M$ is not left-orderable.
\end{corollary}

\begin{proof}
Consider a reduced alternating diagram $D$ of $M$. By the same argument as in the proof of Corollary \ref{cor5}, the elements of $IQ(M)$ corresponding to the arcs in $D$ are pairwise distinct. The result now follows from Theorem \ref{thm-left-order}.
\end{proof}

\begin{corollary}
Let $K$ be an alternating knot of prime determinant. Then the involutory quandle $IQ(K)$ of $K$ is not left-orderable.
\end{corollary}

\begin{proof}
Consider a reduced alternating diagram $D$ of $K$. Arguing as in the proof of Corollary \ref{cor6}, the elements of $IQ(M)$ corresponding to any two arcs in $D$ are distinct. The proof now follows from Theorem \ref{thm-left-order}.
\end{proof}

Let $m\geq3$ and $n\geq2$ be relatively prime integers. The {\it Turk's head knot} $THK(m,n)$ is the closure of the braid $\left(\sigma_1\sigma_2^{-1}\sigma_3\sigma_4^{-1}\cdots\sigma_{m-1}^\delta\right)^n$, where $\delta=+1$ if $m$ is even and $\delta=-1$ if $m$ is odd. Note that Turk's head knots are alternating knots.

\begin{corollary}
Let $m\geq3$ and $n\geq2$ be relatively prime integers. If $n=2$ or $m=3$ or $m$ is even, then the involutory quandle of the Turk's head knot $THK(m,n)$ is not left-orderable.
\end{corollary}

\begin{proof}
Let $D$ be a reduced alternating diagram of $THK(m,n)$. By \cite[Theorems 3 and 4]{dmms}, there exists a Fox coloring that assigns different colors to different arcs of the diagram $D$. Hence, elements of the involutory quandle of $THK(m,n)$ corresponding to the arcs in $D$ are pairwise distinct. The result now follows from Theorem \ref{thm-left-order}.
\end{proof}

\medskip

\begin{ack}
Hitesh Raundal is supported by research associateship under the SERB grant SB/ SJF/2019-20. Manpreet Singh thanks IISER Mohali for the PhD Research Fellowship. Mahender Singh is supported by the Swarna Jayanti Fellowship grants DST/SJF/MSA-02/2018-19 and SB/SJF/2019-20.
\end{ack}


\begin{thebibliography}{HD}
	
\bibitem{Andruskiewitsch2003} Nicol\'as Andruskiewitsch and Mat\'ias Gra\~{n}a, \textit{From racks to pointed Hopf algebras}, Adv. Math. {\bf 178} (2003), no. 2, 177--243.
	
\bibitem{aps} Marta M. Asaeda, Jozef H. Przytycki and Adam S. Sikora, \textit{Kauffman-Harary conjecture holds for Montesinos knots}, J. Knot Theory Ramifications {\bf 13 (4)} (2004), 467--477.
	
\bibitem{bardakov-dey-singh} Valeriy G. Bardakov, Pinka Dey and Mahender Singh, \textit{Automorphism groups of quandles arising from groups}, Monatsh. Math. {\bf 184} (2017), 519--530.
	
\bibitem{bn} Valeriy Bardakov and Timur Nasybullov, \textit{Embeddings of quandles into groups}, J. Algebra Appl. {\bf 19} (2020), no. 7, 2050136, 20 pp.
	
\bibitem{BardakovNasybullovSingh} Valeriy Bardakov, Timur Nasybullov and Mahender Singh, \textit{Automorphism groups of quandles and related groups}, Monatsh. Math. {\bf 189} (2019), no. 1, 1--21. 
	
\bibitem{bps} Valeriy G. Bardakov, Inder Bir S. Passi and Mahender Singh, \textit{Zero-divisors and idempotents in quandle rings}, (2020), arXiv:2001.06843v2.
	
\bibitem{brw} Steven Boyer, Dale Rolfsen and Bert Wiest, \textit{Orderable 3-manifold groups}, Ann. Inst. Fourier (Grenoble) {\bf 55} (2005), no. 1, 243--288.
	
\bibitem{CKS03} J. Scott Carter, Seiichi Kamada and Masahico Saito, \textit{Diagrammatic computations for quandles and cocycle knot invariants}, Diagrammatic morphisms and applications (San Francisco, CA, 2000), 51--74, Contemp. Math., 318, Amer. Math. Soc., Providence, RI, 2003.
	
\bibitem{cr} Adam Clay and Dale Rolfsen, \textit{Ordered groups and topology}, Graduate Studies in Mathematics, 176. American Mathematical Society, Providence, RI, 2016. x+154 pp.
	
\bibitem{DDHPV} M. A. Dabkowska, M. K. Dabkowski, V. S. Harizanov, J. H. Przytycki and M. A. Veve, \textit{Compactness of the space of left orders}, J. Knot Theory Ramifications {\bf 16} (2007), no. 3, 257--266.
	
\bibitem{d} Patrick Dehornoy, \textit{Braid groups and left distributive operations}, Trans. Amer. Math. Soc. {\bf 345} (1994), no. 1, 115--150.
	
\bibitem{ddrw} Patrick Dehornoy, Ivan Dynnikov, Dale Rolfsen and Bert Wiest, \textit{Ordering braids}, Mathematical Surveys and Monographs, 148. American Mathematical Society, Providence, RI, 2008. x+323 pp.

\bibitem{dmms} Nicholas E. Dowdall, Thomas W. Mattman, Kevin Meek and Pablo R. Solis, \textit{On the Harary-Kauffman conjecture and Turk's head knots}, Kobe J. Math. {\bf 27} (2010), no. 1-2, 1--20.
	
\bibitem{fr} Michael Falk and Richard Randell, \textit{Pure braid groups and products of free groups}, Contemp. Math. {\bf 78} (1988), 217--228.
	
\bibitem{frk} Roger Fenn and Colin Rourke, \textit{Racks and links in codimension two}, J. Knot Theory Ramifications {\bf 1} (1992), no. 4, 343--406.
	
\bibitem{Ha2018} Trang Ha, \textit{On algorithmic properties of computable magmas}, Thesis (Ph.D.)--The George Washington University. 2018. 80 pp.
	
\bibitem{HaValentina} Trang Ha and Valentina Harizanov, \textit{Orders on magmas and computability theory}, J. Knot Theory Ramifications {\bf 27} (2018), no. 7, 1841001, 13 pp.
	
\bibitem{j1} David Joyce, \textit{A classifying invariant of knots, the knot quandle}, J. Pure Appl. Algebra {\bf 23} (1982), 37--65.
	
\bibitem{j2} David Joyce, \textit{An algebraic approach to symmetry with applications to knot theory}, Ph.D. Thesis, University of Pennsylvania, 1979.
	
\bibitem{k1} Seiichi Kamada, \textit{Kyokumen musubime riron (Surface-knot theory)}, (in Japanese), Springer Gendai Sugaku Series {\bf 16} (2012), Maruzen Publishing Co. Ltd.
	
\bibitem{k2} Seiichi Kamada, \textit{Surface-knots in 4-space. An introduction}, Springer Monographs in Mathematics. Springer, Singapore, 2017. xi+212 pp.
	
\bibitem{kl} Louis H. Kauffman and Sofia Lambropoulou, \textit{On the classification of rational tangles}, Adv. in Appl. Math. {\bf 33} (2004), 199--237.
	
\bibitem{lrr} Peter A. Linnell, Akbar H. Rhemtulla and Dale Rolfsen, \textit{Invariant group orderings and Galois conjugates}, J. Algebra {\bf 319} (2008), 4891--4898.
	
\bibitem{ms} Thomas W. Mattman and Pablo Solis, \textit{A proof of the Kauffman-Harary conjecture}, Algebr. Geom. Topol.  {\bf 9} (2009), 2027--2039.
	
\bibitem{m} S. V. Matveev, \textit{Distributive groupoids in knot theory}, in Russian: Mat. Sb. (N.S.) {\bf 119 (1)} (1982) 78--88, translated in Math. USSR Sb. {\bf 47 (1)} (1984), 73--83.
	
\bibitem{n1} Takuji Nakamura, \textit{On a positive knot without positive minimal diagrams}, Proceedings of the Winter Workshop of Topology/Workshop of Topology and Computer (Sendai, 2002/Nara, 2001), Interdiscip. Inf. Sci. {\bf 9 (1)} (2003), 61--75.
	
\bibitem{n2} Takuji Nakamura, \textit{Positive alternating links are positively alternating}, J. Knot Theory Ramifications {\bf 9 (1)} (2000), 107--112.
	
\bibitem{pr} Bernard Perron and Dale Rolfsen, \textit{On orderability of fibred knot groups}, Math. Proc. Cambridge Philos. Soc. {\bf 135} (2003), no. 1, 147--153.
	
\bibitem{rz} Markus Rost and Heiner Zieschang, \textit{Meridional generators and plat presentations of torus links}, J. London Math. Soc. (2) {\bf 35 (3)} (1987), 551--562.

\bibitem{rw} Colin Rourke and Bert Wiest, \textit{Order automatic mapping class groups}, Pacific J. Math. {\bf194} (2000), 209--227.
	
\bibitem{r} Hayley Ryder, \textit{An algebraic condition to determine whether a knot is prime}, Math. Proc. Camb. Philos. Soc. {\bf 120} (1996), 385-389.
	
\bibitem{s} A. Stoimenow, \textit{On the crossing number of positive knots and braids and braid index criteria of Jones and Morton-Williams-Franks}, Trans. Amer. Math. Soc. {\bf 354 (10)} (2002), 3927--3954.
	
\bibitem{v} A. A. Vinogradov, \textit{On the free product of ordered groups}, (Russian) Mat. Sbornik N.S. {\bf 25 (67)} (1949), 163--168.
	
\bibitem{w} Steven Karl Winker, \textit{Quandles, knots invariants and the n-fold branched cover}, Thesis (Ph.D.)--University of Illinois at Chicago. 1984. 198 pp.

\end{thebibliography}
\end{document}